\newtheorem{defn}{Definition}
\newtheorem{ex}{Example}
\newtheorem{thm}{Theorem}
\newtheorem{rem}{Remark}
\newtheorem{prop}{Proposition}
\newtheorem{cor}{Corollary}
\newtheorem{lem}{Lemma}
\newtheorem{prob}{Problem}
\DeclareMathOperator{\inter}{int}
\DeclareMathOperator{\vol}{vol}
\DeclareMathOperator{\bd}{bd}
\DeclareMathOperator{\conv}{conv}
\DeclareMathOperator{\ir}{ir}
\DeclareMathOperator{\cirr}{cr}
\DeclareMathOperator{\skel}{skel}
\newcommand{\Sph}{\mathbb{S}}
\newcommand{\HH}{\mathbb{H}}
\renewcommand{\Re}{\mathbb{R}}
\newcommand{\Eu}{\mathbb{E}}
\newcommand{\M}{\mathbb{M}}
\newcommand{\BB}{\mathbf{B}}
\begin{document}

\title[Discrete isoperimetric problems]{Discrete isoperimetric problems in spaces of constant curvature}

\author[B. Basit]{Bushra Basit}
\author[Z. L\'angi]{Zsolt L\'angi}

\address{Bushra Basit, Department of Geometry, Budapest University of Technology and Economics,\\
M\H uegyetem rkp. 3., H-1111 Budapest, Hungary} 
\email{bushrabasit18@gmail.com}
\address{Zsolt L\'angi, Department of Geometry, Budapest University of Technology and Economics, and MTA-BME Morphodynamics Research Group,\\
M\H uegyetem rkp. 3., H-1111 Budapest, Hungary} 
\email{zlangi@math.bme.hu}

\thanks{Partially supported by the BME Water Sciences \& Disaster Prevention TKP2020 Institution Excellence Subprogram, grant no. TKP2020 BME-IKA-VIZ and
the NKFIH grant K119670.}

\subjclass[2010]{52B60, 52A55, 52B11}
\keywords{spherical polytope, hyperbolic polytope, volume, total edge length, discrete isoperimetric problem, Steiner symmetrization}

\begin{abstract}
The aim of this paper is to prove isoperimetric inequalities for simplices and polytopes with $d+2$ vertices in Euclidean, spherical and hyperbolic $d$-space.
In particular, we find the minimal volume $d$-dimensional hyperbolic simplices and spherical tetrahedra of a given inradius. Furthermore, we investigate the properties of maximal volume spherical and hyperbolic polytopes with $d+2$ vertices with a given circumradius, and the hyperbolic polytopes with $d+2$ vertices with a given inradius and having a minimal volume or minimal total edge length. Finally, for any $1 \leq k \leq d$, we investigate the properties of Euclidean simplices and polytopes with $d+2$ vertices having a fixed inradius and a minimal volume of its $k$-skeleton. The main tool of our investigation is Euclidean, spherical and hyperbolic Steiner symmetrization.
\end{abstract}

\maketitle

\section{Introduction}\label{sec:intro}

The classical Discrete Isoperimetric Inequality, stating that among convex $n$-gons of unit area in the Euclidean plane $\Eu^2$, the ones with minimal perimeter are the regular ones, was already observed by Zenodorus \cite{Blasjo}. Since that time, many similar problems, often called isoperimetric problems, have appeared in the literature \cite{BMP05}, asking about the extremal value of a geometric quantity in a certain family of convex polytopes. In particular, results about convex polyhedra in Euclidean $3$-space with a given number of vertices or faces and having a fixed inradius or circumradius can be found, e.g. in \cite{ftl} or \cite{bermanhanes}.

Much less is known about convex polytopes with a given number of vertices in the $d$-dimensional Euclidean space $\Eu^d$. Among the known results for polytopes with $d+2$ vertices, which in most problems seems to be the first interesting case, we can mention the paper \cite{BB} of B\"or\"oczky and B\"or\"oczky Jr. finding the minimum surface area polytopes of unit volume in this family, \cite{KW03} of Klein and Wessler determining the maximum volume polytopes with unit diameter, and \cite{AZ16} of G.Horv\'ath and the second named author about maximum volume polytopes with unit circumradius. We note that both \cite{KW03} and \cite{AZ16} contain partial results for polytopes with $d+3$ vertices (see also \cite{KW05}).

In the authors' knowledge, in hyperbolic and spherical spaces there are just a few examples of solutions of isoperimetric problems. In particular, Peyerimhoff \cite{P02} proved that among hyperbolic simplices inscribed in a given ball, the regular ones have maximal volume, and B\"or\"oczky \cite{Boroczky} proved the analogous statement for spherical simplices.

Our goal in this paper is to examine isoperimetric problems in $d$-dimensional Euclidean, hyperbolic and spherical space for polytopes with $d+1$ or $d+2$ vertices.
Our main tool is the Steiner symmetrization of convex sets, whose hyperbolic and spherical counterparts were introduced by Schneider \cite{Schneider} and B\"or\"oczky \cite{Boroczky}, respectively.

In Section~\ref{sec:prelim}, we introduce the necessary notation and prove some lemmas needed in our investigation, describing also the properties of Steiner symmetrization. In Section~\ref{sec:results} we present our main results. In Section~\ref{sec:app} we apply our methods to prove statements for measures generated by rotationally symmetric density functions. Finally, in Section~\ref{sec:remarks} we present some additional remarks and pose some open problems.

\section{Preliminaries}\label{sec:prelim}

Throughout the paper we denote the $d$-dimensional Euclidean, hyperbolic and spherical space by by $\Eu^d, \HH^d$ and $\Sph^d$, respectively. We regard $\Sph^d$ as the unit sphere centered at the origin $o$ of the space $\Eu^{d+1}$.

Let $\M \in \{ \Eu^d, \HH^d, \Sph^d \}$. For any two points $p,q \in \M$, which are not antipodal if $\M = \Sph^d$, we denote the shortest geodesic connecting $p$ and $q$ by $[p,q]$, and call it the \emph{closed segment with endpoints $p$ and $q$}; the distance $d(p,q)$ of $p$ and $q$ is defined as the length of $[p,q]$. If $\M = \Eu^d$, then for any point $x$ its distance from the origin $o$ is the Euclidean norm of $x$, which we denote by $||x||$.

We say that $K \subset \M$ is convex if for any two points $p,q \in K$, we have $[p,q] \subseteq K$; here, if $\M = \Sph^d$, we also assume that $K$ is contained in an open hemisphere of $\Sph^d$. Furthermore, for any $X \subseteq \M$, where $X$ is contained in an open hemisphere if $\M = \Sph^d$, the intersection of all convex sets containing $X$ is a convex set, called the \emph{convex hull} of $X$, and denoted by $\conv(X)$. A compact, convex set with nonempty interior is called a \emph{convex body}. A convex polytope in $\M$ is the convex hull of finitely many points. A face of a convex polytope $P$ is the intersection of the polytope with a supporting hyperplane of $P$, $0$-dimensional faces of $P$ are called vertices of $P$, and we denote the vertex set of $P$ with $V(P)$.

Let $P \subset \M$ be a $d$-dimensional convex polytope. The radius of a largest ball contained in $P$ is called the \emph{inradius} of $P$, and is denoted by $\ir(P)$, and the radius of a smallest ball containing $P$ is called the \emph{circumradius} of $P$, denoted by $\cirr(P)$.
For any $1 \leq k \leq d-1$, the \emph{$k$-skeleton} of $P$ is the union of its $k$-dimensional faces; we denote it by $\skel_k(P)$. For any $1 \leq k \leq d$, we denote $k$-dimensional volume by $\vol_k (\cdot)$, and call $\vol_k(\skel_k(P))$ the \emph{total $k$-content} of $P$, or if $k=1$, then the \emph{total edge length} of $P$.

Note that in the projective ball model of $\HH^n$, hyperbolic line segments are represented by Euclidean line segments, implying that hyperbolic convex sets are exactly those represented by Euclidean convex sets. Furthermore, applying central projection onto a tangent hyperplane of $\Sph^d$, the same holds in spherical space as well. This shows that Remarks~\ref{rem:combin}-\ref{rem:convhull}, which are well known in $\Eu^d$, hold in $\HH^d$ and $\Sph^d$ as well. We note that for $\M=\Eu^d$, Remarks~\ref{rem:combin} and \ref{rem:face_structure} can be found in \cite[Section 6.1]{Grunbaum} (see also \cite{AZ16}).

\begin{rem}\label{rem:combin}
Every $d$-dimensional simplicial polytope $P  \subset \M$ with $d+2$ vertices is the convex hull of two simplices $S_1, S_2$ with dimensions $\dim S_1 = k$ and $\dim S_2 = d-k$ with some $0 \leq k \leq \lfloor \frac{d}{2} \rfloor$, respectively, such that $S_1 \cap S_2 = \{ p \}$ for some point $p$ in the relative interior of both $S_1$ and $S_2$. Furthermore, every $d$-dimensional polytope $Q \subset \M$ is the convex hull of $r$ points and a $(d-r)$-dimensional simplicial polytope with $d-r+2$ vertices, for some $0 \leq r \leq d-2$.
\end{rem}

We note that in Remark~\ref{rem:combin}, $S_1$ and $\conv(S_2 \cup Q)$ are two simplices which intersect at a point which is a relative interior point of $S_1$ and a point of $\conv(S_2 \cup Q)$.

\begin{rem}\label{rem:face_structure}
The combinatorial structure of a convex polytope with a few vertices can be described e.g. by a \emph{Gale diagram} of the polytope. If $P$ is a $d$-dimensional convex polytope with $d+2$ vertices, and $P= \conv (S_1 \cup S_2 \cup \{ p_1, \ldots, p_r\})$, where $\dim S_1 = k_1$, $\dim S_2 = k_2$ with $k_1+k_2+r=d$ and $k_1, k_2 \geq 1$, then $P$ can be represented by a multiset consisting of the points $-1,0,1$ on the real line, with multiplicities $k_1+1, r, k_2+1$. Here the elements equal to $-1$ represent the vertices of $S_1$, those equal to $1$ represent the vertices of $S_2$, and those equal to $0$ represent the $p_i$. In this representation, a subset of the vertex set of $P$ is the vertex set of a face of $P$ if and only if the complement of the vertex set of $P$ is associated to points in the diagram whose convex hull contains $0$ in its relative interior. In particular, if $k_1, k_2 \geq 2$, then every pair of vertices of $P$ is connected by an edge, and the same holds for any pair apart from the vertices of $S_1$ and/or $S_2$ if $k_1=1$ and/or $k_2 = 1$, respectively.
\end{rem}

\begin{rem}\label{rem:intersection}
Let $P = \conv (S_1 \cup S_2)$ be the convex hull of two simplices with $\dim S_1 + \dim S_2 = d$ such that the subspaces generated by them intersect at a single point $p$ which belongs to $S_1 \cap S_2$, and is different from any vertex of $S_1$ and $S_2$. Then, for any vertices $q_1, q_2$ of $S_1$, there is a hyperplane $H$ containing $V(P) \setminus \{ q_1, q_2 \}$ and intersecting $[q_1,q_2]$.
\end{rem}

\begin{proof}
Observe that $V(P) \setminus \{ q_1, q_2\}$ contains $d$ points, and thus, there is a hyperplane containing it. We show that this hyperplane can be chosen in such a way that it intersects $[q_1,q_2]$. Indeed, any hyperplane $H'$ containing $V(P) \setminus \{ q_1, q_2 \}$ satisfies $S_2 \subset H'$, implying also $p \in H'$. Furthermore, since $V(S_1) \setminus \{ q_1, q_2 \} \subset H' \cap L_1$, if $p \notin \conv (V(S_1) \setminus \{ q_1, q_2 \})$, then there is a unique hyperplane $H'$ containing $V(P) \setminus \{ q_1, q_2 \}$, and this hyperplane intersects $[q_1,q_2]$. If $p \in \conv (V(S_1) \setminus \{ q_1, q_2 \})$, then the subspace spanned by $V(P) \setminus \{ q_1, q_2 \}$ is $(d-2)$-dimensional, and thus, we may choose $H'$ such that it contains a point of $[q_1,q_2]$.
\end{proof}

\begin{rem}\label{rem:convhull}
Let $H$ be a hyperplane of $\M$. Let $Q$ be a compact, convex set in $H$, and let $[p_1,p_2]$ be a segment such that $H \cap [p_1,p_2]$ is a singleton $\{p\}$. Let $P = \conv (Q \cup [p_1,p_2])$. Then, for any plane $F$ containing $[p_1,p_2]$, $F \cap P$ is either $[p_1,p_2]$, or a triangle $\conv \{ q, p_1,p_2 \}$, or a quadrangle $\conv \{ q_1, q_2, p_1, p_2 \}$, where $q, q_1, q_2$ are relative boundary points of $Q$.
\end{rem}

\subsection{Euclidean Steiner symmetrization}

\begin{defn}\label{defn:EuSteiner}
Let $H$ be a hyperplane in $\Eu^d$. The \emph{Steiner symmetrization $\sigma$} with respect to $H$ is the geometric transformation which assigns to any convex body $K \subset \Eu^d$ the unique compact set $K'$ symmetric to $H$ with the property that for any line $L$ orthogonal to $H$, $K \cap L$ and $K' \cap L$ are both nondegenerate segments of equal length, or singletons, or the empty set (see Figure~\ref{fig:Eu_Steiner}). In this case we call the set $\sigma(K)$ the \emph{Steiner symmetral} of $K$.
\end{defn}

\begin{figure}[ht]
\begin{center}
\includegraphics[width=0.4\textwidth]{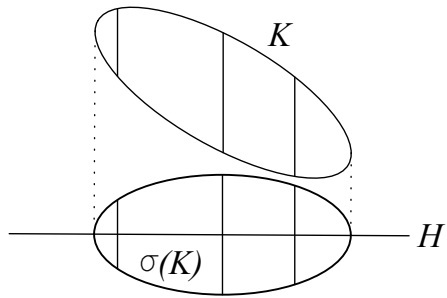}
\caption{Euclidean Steiner symmetrization of a plane convex body $K$ with respect to a horizontal line $H$. Sections with the same vertical lines are of equal length.}
\label{fig:Eu_Steiner}
\end{center}
\end{figure}

\begin{lem}\label{lem:eu_main}
Let $P \subset \Eu^d$ be a convex polytope with vertices $p_1,p_2, \ldots, p_n$. Assume that for some hyperplane $H' \subset \Eu^d$ intersecting $[p_1,p_2]$, we have $p_3, \ldots, p_n \in H'$. Let $H$ be the hyperplane bisecting $[p_1,p_2]$. Let $\pi : \Eu^d \to H$ denote the orthogonal projection onto $H$ and let $\sigma$ denote the Steiner symmetrization with respect to $H$. 
Then
\begin{equation}\label{eq:euStein_vol}
\sigma(P) = \conv ( [p_1,p_2] \cup \{ \pi(p_3), \ldots, \pi(p_n) \} ).
\end{equation}
Furthermore,
\begin{equation}\label{eq:euStein_ir}
\ir(\sigma(P)) \geq \ir(P)
\end{equation}
with equality if and only if $P$ is symmetric to $H$.
Finally, if for some $1 \leq k \leq d-1$, $P$ satisfies the property that for any $ 3 \leq i_1 \leq i_2 \leq \ldots \leq i_m$, $\conv \{ p_{1}, p_{i_1}, \ldots, p_{i_m} \}$ is a $k$-face of $P$ if and only if $\conv \{ p_{2}, p_{i_1}, \ldots, p_{i_m} \}$ is a $k$-face of $P$, then
\begin{equation}\label{eq:euStein_skel}
\vol_k(\skel_k(\sigma(P)) \leq \vol_k(\skel_k(P)).
\end{equation}

\end{lem}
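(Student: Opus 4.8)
\emph{Setup and \eqref{eq:euStein_vol}.} Since $p_3,\dots,p_n\in H'$, the polytope is $P=\conv\big(Q\cup[p_1,p_2]\big)$ with $Q:=\conv\{p_3,\dots,p_n\}\subset H'$, and as $H'$ meets $[p_1,p_2]$ the segment $[p_1,p_2]$ is transversal to $H'$, so $P$ is a ``generalized bipyramid'' (if $H'$ contains $p_1$ or $p_2$ the polytope is a pyramid and everything below simplifies). Let $u$ be the unit vector in direction $p_2-p_1$, so that $\pi$ is orthogonal projection onto $H$ along $u$; for $y\in H$ let $L_y$ be the line through $y$ in direction $u$, and $m$ the midpoint of $[p_1,p_2]$. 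My plan is to note first that $R:=\conv\big([p_1,p_2]\cup\{\pi(p_3),\dots,\pi(p_n)\}\big)$ is symmetric with respect to $H$, being the convex hull of an $H$-symmetric set; by the defining property of $\sigma$ it then suffices to show $|P\cap L_y|=|R\cap L_y|$ for every $y\in H$ and that $R\cap L_y$ is centred at $y$. Writing a point of a body $\conv\big(S\cup[p_1,p_2]\big)$ (with $S$ in a hyperplane transversal to $u$) as $\lambda p_1+\mu p_2+(1-\lambda-\mu)q$, $q\in S$, a direct computation shows its intersection with $L_y$ is the segment whose $u$-coordinate endpoints are $\bar\tau_S(y)-s^*(y)a_S$ and $\bar\tau_S(y)+s^*(y)b_S$, where $\bar\tau_S$ is affine, $a_S+b_S=\ell:=\|p_1-p_2\|$, and $s^*(y)=\sup\{s\in[0,1):\ (y-sm)/(1-s)\in\pi(S)\}$ (with the obvious convention at $y=m$). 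In particular this section has length $s^*(y)\,\ell$, depending only on $\pi(S)$; since $\pi(Q)=\pi\big(\conv\{p_3,\dots,p_n\}\big)$, the function $s^*$ is the same for $P$ and $R$, whereas $\bar\tau_{\pi(Q)}\equiv 0$, so $R\cap L_y$ is the centred segment of length $s^*(y)\ell$, which is exactly $\sigma(P)\cap L_y$. As a by-product this yields the description $P\cap L_y=\{y\}\times[\,\bar\tau(y)-s^*(y)a,\ \bar\tau(y)+s^*(y)b\,]$, used next.

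\emph{\eqref{eq:euStein_ir}.} I would use that $\sigma$ is monotone under inclusion (symmetrized sections inherit inclusions of the original sections) and that it sends a ball $B(c,r)$ to the ball $B(\pi(c),r)$ of the same radius; hence $B(c,r)\subseteq P$ with $r=\ir(P)$ gives $B(\pi(c),r)=\sigma(B(c,r))\subseteq\sigma(P)$, i.e. $\ir(\sigma(P))\ge\ir(P)$. For equality I would translate ``a ball lies in $P$'' into the section language above:
\[
B\big((\bar c,t_0),\rho\big)\subseteq P \iff \sqrt{\rho^2-\|y-\bar c\|^2}+\big|\bar\tau(y)-t_0-s^*(y)\bar\tau(m)\big|\le s^*(y)\,\ell/2\quad (\|y-\bar c\|\le\rho),
\]
which is exactly the condition $B\big((\bar c,0),\rho\big)\subseteq R$ strengthened by the modulus term. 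If $\ir(\sigma(P))=\ir(P)=\rho$, an inball of $P$ produces an inball of $R$; since a largest inscribed ball of $R$ touches $\partial R$ in points whose convex hull contains its centre, the modulus term is forced to vanish on all ``top--bottom'' contact sections, and, using that the chord-length function $s^*$ of $R$ is concave but — because of the bipyramidal shape of $R$ — not affine on the region where the inball sits, one deduces $\bar\tau\equiv 0$, i.e. $H'=H$ and $P=\sigma(P)$. I expect this rigidity step to be the most delicate point of the lemma.

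\emph{\eqref{eq:euStein_skel}.} I would classify the $k$-faces of $P$ by how many of $p_1,p_2$ they contain. A $k$-face containing neither is a $k$-face $F$ of $Q$, and $\sigma$ carries it to its orthogonal projection $\pi(F)$ with $\vol_k(\pi(F))\le\vol_k(F)$. A $k$-face containing both $p_1,p_2$ does not occur: in the simplicial case $n=d+2$ this is immediate from the Gale-diagram description in Remark~\ref{rem:face_structure} (the complementary diagram reduces to a single multiple point, which does not contain $0$ in its relative interior), and the general case is excluded the same way. A $k$-face containing exactly one of $p_1,p_2$ has the form $\conv(\{p_i\}\cup F')$ with $F'$ a $(k-1)$-face of $Q$, and by the hypothesis on $P$ these occur in pairs $\{\conv(\{p_1\}\cup F'),\conv(\{p_2\}\cup F')\}$, carried by $\sigma$ to $\{\conv(\{p_1\}\cup\pi F'),\conv(\{p_2\}\cup\pi F')\}$; any combinatorial degeneration caused by $\sigma$ only makes $k$-faces shrink or merge, so it is harmless. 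Thus \eqref{eq:euStein_skel} reduces to
\[
\vol_k\big(\conv(\{p_1\}\cup F')\big)+\vol_k\big(\conv(\{p_2\}\cup F')\big)\ \ge\ \vol_k\big(\conv(\{p_1\}\cup\pi F')\big)+\vol_k\big(\conv(\{p_2\}\cup\pi F')\big).
\]

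\emph{The pair inequality.} By triangulating $F'$ and additivity it suffices to treat a $(k-1)$-simplex $F'=\conv\{q_1,\dots,q_k\}$ (if $\pi F'$ degenerates, the right-hand side is $0$). Choosing coordinates with $H=\{x_d=0\}$, $u=e_d$, $p_1=(m,-\ell/2)$, $p_2=(m,\ell/2)$ and $q_i=(\bar q_i,h_i)$, the matrix-determinant lemma gives $k!\,\vol_k\big(\conv(\{p\}\cup\{q_1,\dots,q_k\})\big)=\sqrt{\delta}\,\sqrt{1+\|c\|_N^2}$, where $\delta>0$ and the positive definite $N$ depend only on the vectors $\bar q_i-m$, while $c=(h_i+\ell/2)_i$ for $p=p_1$, $c=(h_i+\ell/2)_i-\ell\mathbf 1$ for $p=p_2$, and $c=\pm(\ell/2)\mathbf 1$ in the two projected cases. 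Since $v\mapsto\sqrt{1+\|v\|_N^2}$ is convex and even, the function $g(c):=\sqrt{1+\|c\|_N^2}+\sqrt{1+\|c-\ell\mathbf 1\|_N^2}$ is convex and symmetric about $c=(\ell/2)\mathbf 1$, hence attains its minimum there; that minimum value is $2\sqrt{1+\tfrac{\ell^2}{4}\|\mathbf 1\|_N^2}$, which is precisely the pair inequality (for $k=1$ this is just the convexity of $t\mapsto\sqrt{D^2+t^2}$). Summing the projection estimate over the faces of the first type and the pair inequality over the pairs of the second type gives $\vol_k(\skel_k(\sigma(P)))\le\vol_k(\skel_k(P))$, completing the plan.
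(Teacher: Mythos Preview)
Your approach is largely correct but diverges from the paper's, and it contains one genuine gap. For \eqref{eq:euStein_skel} the paper does \emph{not} classify faces: it sets up a linear parameter system (shadow system) $P(t)=\conv\{p_1,p_2,p_3(t),\dots,p_n(t)\}$ with $p_i(t)=p_i-(1-t)d_i v$ for $i\ge 3$, so that $P(1)=P$, $P(-1)$ is the mirror image of $P$ in $H$, and $P(0)=\sigma(P)$. Since the $p_i(t)$ for $i\ge 3$ always lie in a rotated copy of $H'$, the face lattice of $P(t)$ is constant, and Lemma~\ref{lem:shadow} makes $g(t)=\vol_k(\skel_k(P(t)))$ convex; as $g(-t)=g(t)$, the minimum is at $t=0$. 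This one-line symmetry argument replaces your whole face-by-face bookkeeping and the explicit ``pair inequality'' (which, incidentally, is exactly the same convexity, written out by hand).

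The gap in your version is the claim that no $k$-face contains both $p_1$ and $p_2$. This is false: take $P$ a tetrahedron in $\Eu^3$ and $H'$ any plane through $p_3,p_4$ meeting $[p_1,p_2]$; then $[p_1,p_2]$ is an edge and $\conv\{p_1,p_2,p_3\}$, $\conv\{p_1,p_2,p_4\}$ are $2$-faces. Your Gale-diagram argument only treats $n=d+2$ and tacitly assumes that $p_1,p_2$ are the two Gale-opposite vertices, which the lemma does not require. The repair is easy: any face $F$ containing both $p_1,p_2$ contains the direction $v=p_2-p_1$, and the map $q\mapsto\pi(q)$ is a shear along $v$, so $\vol_k(F)=\vol_k\big(\conv(\{p_1,p_2\}\cup\pi(V(F)\setminus\{p_1,p_2\}))\big)$; these faces therefore contribute equally on both sides. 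For the equality case in \eqref{eq:euStein_ir}, the paper avoids your analytic rigidity argument entirely: it reduces to the elementary fact for triangles in $\Eu^2$, and then slices $P$ by $2$-planes through $[p_1,p_2]$ (using Remark~\ref{rem:convhull}) to transport the $2$-dimensional equality characterization to $P$.
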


For the proof of Lemma~\ref{lem:eu_main} we need Lemma~\ref{lem:shadow}. Lemma~\ref{lem:shadow} explores the properties of the so-called \emph{linear parameter systems} or \emph{shadow systems}, introduced by Rogers and Shephard in \cite{RS57}. Lemma~\ref{lem:shadow} can be found in \cite{Joos}, and hence, we omit its proof.

\begin{lem}\label{lem:shadow}
Let $p_1, \ldots, p_k, v \in \Eu^d$, and $\lambda_1, \ldots, \lambda_k \in \Re$. For $i=1,2,\ldots,k$ and all $t \in \Re$, set $p_i(t) = p_i + \lambda_i t v$ and
$S(t) = \conv \{ p_1(t), \ldots, p_k(t)  \}$.
Then the function $f: \Re \to \Re$, $f(t) = \vol_{k-1}(S(t))$ is a convex function of $t$, and if the points $p_1, \ldots, p_k, v$ are affinely independent, then $f$ is strictly convex.
\end{lem}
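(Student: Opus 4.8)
The plan is to express the $(k-1)$-volume of $S(t)$ as the norm of an exterior product, and to exploit the fact that, since all vertices move parallel to the \emph{single} direction $v$, this exterior product is an \emph{affine} function of $t$; convexity then follows automatically. Concretely, I would set $a_i = p_i - p_1$ and $\mu_i = \lambda_i - \lambda_1$ for $2 \le i \le k$, so that the edge vectors of $S(t)$ issuing from $p_1(t)$ are $e_i(t) = p_i(t) - p_1(t) = a_i + t\mu_i v$. Using the standard identity that the $(k-1)$-dimensional volume of the parallelotope spanned by vectors is the norm of their wedge product, one gets
\[
f(t) = \vol_{k-1}(S(t)) = \frac{1}{(k-1)!}\, \bigl\| e_2(t) \wedge \cdots \wedge e_k(t) \bigr\|,
\]
where $\|\cdot\|$ is the Euclidean norm on $\bigwedge^{k-1}\Eu^d$ induced by the inner product of $\Eu^d$.

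First I would expand the wedge product by multilinearity: each factor $e_i(t) = a_i + t\mu_i v$ contributes either its constant part $a_i$ or its linear part $t\mu_i v$, and any term that selects the linear part in two or more factors contains $v \wedge v = 0$ and so vanishes. Hence only the terms with at most one factor $v$ survive, giving
\[
e_2(t) \wedge \cdots \wedge e_k(t) = U + t\,W, \quad U = a_2 \wedge \cdots \wedge a_k, \quad W = \sum_{i=2}^{k} \mu_i\, (a_2 \wedge \cdots \wedge v \wedge \cdots \wedge a_k),
\]
where in the $i$-th summand of $W$ the factor $a_i$ is replaced by $v$. Thus $t \mapsto U + tW$ is an affine map from $\Re$ into the inner product space $\bigwedge^{k-1}\Eu^d$, and $f$ is a positive multiple of the composition of the norm $\|\cdot\|$ with this affine map. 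Since a norm is convex and convexity is preserved under precomposition with an affine map, $f$ is convex. This settles the first assertion, and no nondegeneracy hypothesis is needed.

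For strict convexity I would reduce to a one-variable computation. Writing $q(t) = \|U + tW\|^2 = \|W\|^2 t^2 + 2\langle U, W\rangle t + \|U\|^2$, the function $f$ is a positive multiple of $\sqrt{q}$, whose second derivative is a positive multiple of $\bigl(\|U\|^2\|W\|^2 - \langle U,W\rangle^2\bigr) q^{-3/2}$. By Cauchy--Schwarz the bracket is nonnegative, and it is strictly positive exactly when $U$ and $W$ are linearly independent; in that case $q$ has no real root, $f$ is smooth, and $f$ is strictly convex. It then remains to check that the affine independence of $p_1, \ldots, p_k, v$ yields this independence: it forces $a_2, \ldots, a_k$ to be independent (so $U \neq 0$) and places $v$ outside their linear span (so every summand of $W$ genuinely involves $v$ and is independent of $U$), which, together with the nondegeneracy of the shadow motion (some $\mu_i \neq 0$, so that $W \neq 0$), gives $U$ and $W$ linearly independent. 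The main obstacle is precisely this last bookkeeping: one must translate the affine-independence hypothesis into linear independence of the two multivectors $U$ and $W$, carefully accounting for the position of $v$ relative to the span of $a_2, \ldots, a_k$ and for the nontriviality of the motion. The convexity itself, by contrast, is immediate once the exterior-algebra reformulation exposes the affine dependence on $t$.
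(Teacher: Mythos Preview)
The paper does not prove this lemma; it cites \cite{Joos} and omits the argument, so there is no in-paper proof to compare against. Your exterior-algebra proof of the convexity assertion is correct and clean: since each $e_i(t)$ is affine in $t$ with the common direction $v$, multilinearity together with $v\wedge v=0$ forces $e_2(t)\wedge\cdots\wedge e_k(t)=U+tW$ to be affine in $t$, and the norm of an affine map into an inner product space is convex.

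The strict-convexity bookkeeping, however, has a real gap. You need $v\notin\operatorname{span}(a_2,\ldots,a_k)$, but the hypothesis that ``the points $p_1,\ldots,p_k,v$ are affinely independent'' (reading $v$ as a point of $\Eu^d$, as the statement does) only yields $v-p_1\notin\operatorname{span}(a_2,\ldots,a_k)$, which is a different condition. You also assume that some $\mu_i\neq 0$, and the hypothesis does not supply this either: nothing forbids $\lambda_1=\cdots=\lambda_k$, in which case $W=0$ and $f$ is constant. In fact the gap is not repairable under the literal hypothesis: take $k=2$, $p_1=(0,1)$, $p_2=(1,1)$, $v=(1,0)$ in $\Eu^2$, with $\lambda_1=0$, $\lambda_2=1$; then $p_1,p_2,v$ are affinely independent, yet $f(t)=|1+t|$ is not strictly convex. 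So either the intended hypothesis in \cite{Joos} is stronger (for instance that $v$ is transverse to the affine hull of $p_1,\ldots,p_k$ and the $\lambda_i$ are not all equal), or the strict-convexity clause as stated here is too optimistic. Your argument correctly identifies the right sufficient condition---linear independence of $U$ and $W$ in $\bigwedge^{k-1}\Eu^d$---but it does not, and cannot, deduce that condition from the hypothesis as written.
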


\begin{proof}[Proof of Lemma~\ref{lem:eu_main}]
Clearly, to prove (\ref{eq:euStein_vol}), it is sufficient to prove the first equality, which is the immediate consequence of Remark~\ref{rem:convhull}.
The inequality in (\ref{eq:euStein_ir}) follows from the fact that the Steiner symmetral of a ball of radius $r$ is a ball of radius $r$. The equality case in (\ref{eq:euStein_ir}) is easy to see if $P$ is a triangle in $\Eu^d$, and applying this observation for the sections of $P$ with planes through $[p_1,p_2]$ yields the statement in the general case.

Now we prove (\ref{eq:euStein_skel}). Assume that $P$ is not symmetric to $H$.
Let $v$ be a unit normal vector of $H$, and for any $p_i$, where $i \geq 3$, let $d_i$ denote the signed distance of $p_i$ from $H$ in such a way that $v$ points towards the positive side of $H$. We define the linear parameter system $L(t)= \{ p_i(t) : i=1,2,\ldots, n \}$, where $p_1(t)=p_1$, $p_2(t)=p_2$ and for $i=3,4,\ldots,n$, $p_i(t)= p_i - (1-t) d_i v$, and set $P(t) = \conv (L(t))$.
Observe that $P(1)=P$, $P(-1)$ is the reflected copy of $P$ about $H$, and $P(0)= \conv ([p_1,p_2] \cup \{ \pi(p_i), i=3,4,\ldots n \}$ is the Steiner symmetral of $P$ with respect to $H$. Note that for any value of $t$, the points $p_i(t)$, where $i=3,4,\ldots,n$, lie in a rotated copy of $H'$ around $H \cap H'$. Thus, by Lemma~\ref{lem:shadow}, the function $g: [-1,1] \to \Re$, $g(t) = \vol_k(\skel_k(P(t)))$ is strictly convex. Since $g(-t)=g(t)$ for any $t \in [-1,1]$, it implies that the unique minimum of $g$ is attained at $t=0$. This readily yields (\ref{eq:euStein_skel}).
\end{proof}

\subsection{Hyperbolic Steiner symmetrization}

In the following definition, for a hyperbolic line $L \subset \HH^d$, the line $L$ and the hypercycles with axis $L$ that are contained in a plane through $L$ are called the \emph{$g$-lines of $\HH^d$ with axis $L$} (see \cite{P02}).

\begin{defn}\label{defn:HSteiner}
Let $H$ be a hyperplane in $\HH^d$, and $L$ be a line orthogonal to $H$. The \emph{Steiner symmetrization $\sigma$} with respect to $H$ and with axis $L$ is the geometric transformation which assigns to any convex body $K \subset \HH^d$ the unique compact set $K'$ symmetric to $H$ with the property that for any $g$-line $L'$ with axis $L$, $K \cap L'$ and $K' \cap L'$ are both nondegenerate segments of equal length, or singletons, or the empty set (see Figure~\ref{fig:H_Steiner}). In this case we call the set $\sigma(K)$ the \emph{Steiner symmetral} of $K$.
\end{defn}

\begin{figure}[ht]
\begin{center}
\includegraphics[width=0.4\textwidth]{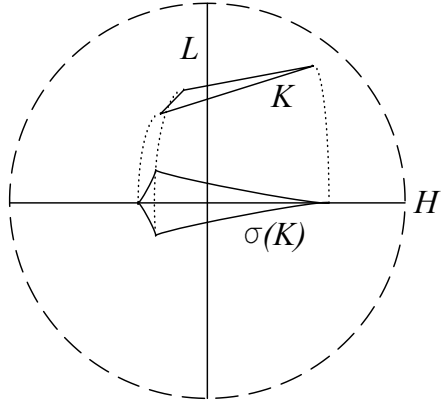}
\caption{Hyperbolic Steiner symmetrization of a triangle $K$ with respect to a horizontal line $H$ and axis $L$ in the projective disk model. Dotted lines denote $g$-lines with axis $L$.}
\label{fig:H_Steiner}
\end{center}
\end{figure}

We note that hyperbolic Steiner symmetrization preserves volume \cite[Proposition 8]{P02}.

In the next lemmas, if $H$ is a hyperplane in $\HH^d$ orthogonal to a line $L$, the \emph{$g$-orthogonal projection $\pi : \HH^d \to H$ onto $H$ with axis $L$} is defined by $\pi(p)=q$ for any $p \in \HH^d$, where $q$ is the intersection point of $H$ and the unique $g$-line through $p$ with axis $L$.
Our next lemma is proved by Peyerimhoff \cite[Proposition 9]{P02}, and hence, we omit its proof.

\begin{lem}\label{lem:hyp_planar}
Let $T = \conv \{ p_1, p_2, p_3 \}$ be a triangle in $\HH^2$. Let $L$ be the line through $[p_1,p_2]$, and let $H$ be the bisector of $[p_1,p_2]$. Let $\pi : \HH^2 \to H$ denote the $g$-orthogonal projection onto $H$ with axis $L$, and let $\sigma$ denote the Steiner symmetrization with respect to $H$ and with axis $L$. Then
\begin{equation}\label{eq:hypStvol}
\sigma(T) \subseteq \conv \{ p_1, p_2, \pi(p_3)\}, \hbox{ and}
\end{equation}
\begin{equation}\label{eq:hypStlen}
d(p_1,p_3) + d(p_2,p_3) \geq d(p_1,\pi(p_3)) + d(p_2,\pi(p_3)).
\end{equation}
Furthermore, in any of (\ref{eq:hypStvol}) or (\ref{eq:hypStlen}), we have equality if and only if $p_3 \in H$.
\end{lem}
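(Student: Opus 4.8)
The plan is to pass to a model of $\HH^2$ in which the $g$-lines with axis $L$ become straight horizontal lines and $\sigma$ becomes an ordinary Euclidean Steiner symmetrization. Concretely, I would work in the band model: the strip $\Sigma=\{(u,v):|v|<\pi/2\}$ with metric $(\cos v)^{-2}(du^2+dv^2)$, in which $L=\{v=0\}$, the $g$-lines with axis $L$ are the lines $\{v=\mathrm{const}\}$, the geodesics orthogonal to $L$ are the lines $\{u=\mathrm{const}\}$, and the $g$-orthogonal projection onto $\{u=0\}$ is $(u,v)\mapsto(0,v)$. Applying a hyperbolic isometry I may assume $H=\{u=0\}$, so that $p_1=(-a,0)$ and $p_2=(a,0)$ with $a=\tfrac12 d(p_1,p_2)$, and (reflecting in $H$ if necessary) $p_3=(u_3,v_3)$ with $v_3>0$ and $u_3\ge0$; then $\pi(p_3)=(0,v_3)$. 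Since the conformal factor depends on $v$ only, a horizontal arc $\{\alpha\le u\le\beta,\,v=c\}$ has hyperbolic length $(\beta-\alpha)/\cos c$, so passing from a section to the equally long $g$-segment centred on $\{u=0\}$ is the same as passing to the horizontal segment of equal Euclidean length centred on $\{u=0\}$; hence, on $\Sigma$, $\sigma$ is exactly the Euclidean Steiner symmetrization with respect to the line $\{u=0\}$. I would also record the routine facts that $\{v\le c\}$ is hyperbolically convex for $c\ge0$ and that $v$ is strictly monotone along any geodesic segment meeting $L$; from these, together with a Jordan-curve count of boundary crossings, it follows that $T$ and $T':=\conv\{p_1,p_2,\pi(p_3)\}$ each meet every line $\{v=c\}$, $0<c<v_3$, in a single horizontal segment -- the one for $T$ bounded by the sides $[p_1,p_3]$ and $[p_2,p_3]$, the one for $T'$ (which is invariant under $u\mapsto -u$, hence under $\sigma$) by $[p_1,\pi(p_3)]$ and $[p_2,\pi(p_3)]$.

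For (\ref{eq:hypStlen}) I would invoke the explicit distance formula. Transporting the upper half-plane formula to band coordinates gives, for a point $(u,v_0)$,
\[
d(p_2,(u,v_0))=\operatorname{arccosh}\left(\frac{\cosh(u-a)}{\cos v_0}\right),\qquad d(p_1,(u,v_0))=\operatorname{arccosh}\left(\frac{\cosh(u+a)}{\cos v_0}\right),
\]
so that, with $h(u):=\operatorname{arccosh}\big(\cosh(u-a)/\cos v_3\big)$, the left-hand side of (\ref{eq:hypStlen}) equals $h(u_3)+h(-u_3)$ and the right-hand side equals $2h(0)$. A short computation gives $h''(u)=\lambda(\lambda^2-1)\cosh(u-a)\big(\lambda^2\cosh^2(u-a)-1\big)^{-3/2}$ with $\lambda=1/\cos v_3\ge1$, so $h$ is convex, and strictly so precisely when $v_3>0$; hence (\ref{eq:hypStlen}) follows from Jensen's inequality, with equality only when $u_3=0$, i.e. $p_3\in H$.

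For (\ref{eq:hypStvol}): since $\sigma$ is Euclidean Steiner symmetrization and $\sigma(T')=T'$, it suffices to show that the Euclidean length of $T\cap\{v=c\}$ is at most that of $T'\cap\{v=c\}$ for every $c\in(0,v_3)$. Writing $\Phi(c;w)$ for the $u$-coordinate at which the geodesic from $(a,0)$ to $(w,v_3)$ crosses $\{v=c\}$, the $u$-width of $T$ at level $c$ is $\Phi(c;u_3)+\Phi(c;-u_3)$ (the side $[p_1,p_3]$ being the image under $u\mapsto-u$ of the geodesic from $(a,0)$ to $(-u_3,v_3)$) while that of $T'$ is $2\Phi(c;0)$, so the claim reduces to
\[
\Phi(c;u_3)+\Phi(c;-u_3)\le 2\,\Phi(c;0).
\]
Computing $\Phi$ with the circular geodesics of the half-plane model, I expect the closed form $\Phi(c;w)=a+\operatorname{arcsinh}\big(\tilde\kappa\sinh(w-a)\big)$, where $\tilde\kappa=\sin c/\sin v_3\in(0,1)$; with $g(s):=\operatorname{arcsinh}(\tilde\kappa\sinh s)$ the inequality becomes $g(u_3-a)+g(-u_3-a)\le 2g(-a)$. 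Now $g$ is odd, and $g'(s)=\tilde\kappa\cosh s\,(1+\tilde\kappa^2\sinh^2 s)^{-1/2}$ is even and strictly increasing in $|s|$ (because $\tilde\kappa<1$), so $g$ is concave on $(-\infty,0]$ and convex on $[0,\infty)$. If $u_3\le a$, then $[-u_3-a,\,u_3-a]\subset(-\infty,0]$ and concavity on that interval gives the inequality; if $u_3>a$, one rewrites it via oddness as $\psi(u_3)\ge\psi(0)$ with $\psi(x):=g(x+a)-g(x-a)$, and $\psi'(x)=g'(x+a)-g'(x-a)\ge0$ for $x\ge0$ since $x+a\ge|x-a|$. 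In both cases the inequality is strict unless $u_3=0$, which together with $T=T'$ when $p_3\in H$ gives the equality characterization.

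The step I expect to be the real obstacle is (\ref{eq:hypStvol}): one must first pin down how the crossing point $\Phi(c;w)$ depends on $w$, and then observe that although $\Phi(c;\cdot)$ is itself neither convex nor concave, the particular symmetric combination above is still dominated by $2\Phi(c;0)$ -- which is exactly the content of the odd / convex-concave structure of $g$. The preliminary claims (hyperbolic convexity of $\{v\le c\}$, strict monotonicity of $v$ along geodesics meeting $L$, and connectedness of the sections) are routine but need to be made explicit for the reduction to a level-by-level length comparison to be legitimate.
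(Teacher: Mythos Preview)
The paper does not give its own proof of this lemma; it simply cites Peyerimhoff \cite[Proposition~9]{P02} and omits the argument. So there is no in-paper proof to compare against. Your proof sketch is therefore to be judged on its own, and it is correct.

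A few comments on the substance. Your choice of the band model is exactly right: it is the unique model in which the $g$-lines with axis $L$ become horizontal Euclidean lines and Steiner symmetrization with respect to $H$ becomes literal Euclidean Steiner symmetrization; this makes the containment~(\ref{eq:hypStvol}) a pure level-by-level width comparison. The two explicit formulas you ``expect'' are in fact correct. For the distance formula, the map $z=ie^{w}$ carries the band isometrically to the upper half-plane, and the computation gives $\cosh d\big((0,0),(x,y)\big)=\cosh x/\cos y$, whence your $h$. For the crossing formula, the same transfer shows the geodesic from $(a,0)$ to $(w,v_3)$ is (after the shift $a\mapsto 0$) the half-plane semicircle with centre $(-\sinh w/\sin v_3,0)$ and radius squared $=\alpha^2+1$; intersecting with the ray $\{\theta=\tfrac{\pi}{2}+c\}$ and taking $\log r$ gives precisely $\operatorname{arcsinh}\big((\sin c/\sin v_3)\sinh w\big)$, confirming your $\Phi(c;w)=a+\operatorname{arcsinh}\big(\tilde\kappa\sinh(w-a)\big)$. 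The convexity/concavity splitting of $g(s)=\operatorname{arcsinh}(\tilde\kappa\sinh s)$ is also right: since $g'(s)^2=\tilde\kappa^2\cosh^2 s/(1+\tilde\kappa^2\sinh^2 s)$ one gets $\frac{d}{ds}g'(s)^2=2\tilde\kappa^2(1-\tilde\kappa^2)\sinh s\cosh s/(1+\tilde\kappa^2\sinh^2 s)^2$, so $g'$ is even and strictly increasing on $[0,\infty)$ when $\tilde\kappa<1$, and your two-case argument for $g(u_3-a)+g(-u_3-a)\le 2g(-a)$ goes through with strict inequality unless $u_3=0$.

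The one place worth tightening is the ``routine'' claim that each horizontal section $T\cap\{v=c\}$ is a single segment: this is not immediate from hyperbolic convexity alone, since $\{v=c\}$ is a hypercycle rather than a geodesic. Your argument via strict monotonicity of $v$ along the two lateral sides is the right one; in the half-plane picture this follows because the geodesic semicircle through $(0,e^a)$ has $R^2=\alpha^2+e^{2a}>\alpha^2$, which forces the polar angle from the origin (hence $v$) to be strictly monotone along it. With that established, the boundary of $T$ meets $\{v=c\}$ in exactly two points for $0<c<v_3$, and the reduction to a width comparison is justified.
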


\begin{lem}\label{lem:hyp_main}
Let $P \subset \HH^d$ be a convex polytope with vertices $p_1,p_2, \ldots, p_n$. Assume that for some hyperplane $H' \subset \HH^d$ intersecting $[p_1,p_2]$, we have $p_3, \ldots, p_n \in H'$. Let $L$ be the straight line through $[p_1,p_2]$ and let $H$ be the bisector of $[p_1,p_2]$. Let $\pi : \HH^d \to H$ denote the $g$-orthogonal projection onto $H$ with axis $L$, and let $\sigma$ be the Steiner symmetrization with respect to $H$ with axis $L$.
Then
\begin{equation}\label{eq:hypStmain1}
\sigma(P) \subseteq \conv ([p_1,p_2] \cup \{ \pi(p_3), \ldots, \pi(p_n) \} ), 
\end{equation}
\begin{equation}\label{eq:hypStmain2}
\vol_d(P) = \vol_d(\sigma(P)) \leq \vol_d(\conv ([p_1,p_2] \cup \{ \pi(p_3), \ldots, \pi(p_n) \}), \hbox{ and}
\end{equation}
\begin{equation}\label{eq:hypStir}
\ir(P) \leq \ir(\conv ([p_1,p_2] \cup \{ \pi(p_3), \ldots, \pi(p_n) \} )),
\end{equation}
with equality in any of (\ref{eq:hypStmain1}), (\ref{eq:hypStmain2}), or (\ref{eq:hypStir}) if and only if $P$ is symmetric to $H$.
\end{lem}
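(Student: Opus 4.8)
The plan is to mimic the structure of the proof of Lemma~\ref{lem:eu_main}, replacing the Euclidean planar statement with its hyperbolic counterpart Lemma~\ref{lem:hyp_planar} and the volume-preservation of Euclidean symmetrization by the hyperbolic version cited from \cite[Proposition 8]{P02}. First I would establish the set inclusion \eqref{eq:hypStmain1}. Since $p_3,\dots,p_n$ all lie on a hyperplane $H'$ meeting $[p_1,p_2]$, every two-dimensional plane $F$ through $L$ (the line of $[p_1,p_2]$) meets $P$ in a set of the form described in Remark~\ref{rem:convhull}: a segment, a triangle $\conv\{q,p_1,p_2\}$, or a quadrangle $\conv\{q_1,q_2,p_1,p_2\}$, where the $q$'s lie on the relative boundary of the section of $P$ by $H'$. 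The hyperbolic Steiner symmetral is computed section-by-section along $g$-lines with axis $L$, and all these $g$-lines lie in planes through $L$; so it suffices to work in each such plane $F$. In $F$, the section $F\cap P$ is a hyperbolic triangle or quadrangle with two vertices $p_1,p_2$ and the others on $F\cap H'$. Applying Lemma~\ref{lem:hyp_planar} to each triangle with vertex set $\{p_1,p_2,q\}$ (splitting a quadrangle into two such triangles sharing the edge $[p_1,p_2]$, or rather sharing the diagonal) gives $\sigma(F\cap P)\subseteq \conv\{p_1,p_2,\pi(q)\}$ in the triangle case, and the analogous containment in the quadrangle case, where each $\pi(q)$ lies on the $g$-line through $q$ with axis $L$, hence on the rotated copy of $H'$ about $H\cap H'$ — in fact on $H$ after symmetrization has been folded, but more precisely the projected points $\pi(q)$ all lie in $H$'s "image" so that $\conv([p_1,p_2]\cup\{\pi(p_3),\dots,\pi(p_n)\})$ contains all of them. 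Taking the union over all planes $F$ through $L$ yields \eqref{eq:hypStmain1}.

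Next, \eqref{eq:hypStmain2} follows at once: $\vol_d(P)=\vol_d(\sigma(P))$ by volume-preservation of hyperbolic Steiner symmetrization, and then the inequality $\vol_d(\sigma(P))\le \vol_d(\conv([p_1,p_2]\cup\{\pi(p_3),\dots,\pi(p_n)\}))$ is immediate from the set inclusion \eqref{eq:hypStmain1} together with monotonicity of volume under inclusion. For \eqref{eq:hypStir}, I would use that a ball of radius $r$ inscribed in $P$ has its Steiner symmetral a ball of radius $r$ inscribed in $\sigma(P)$ (hyperbolic Steiner symmetrization maps balls centered on $H$ to balls, and an arbitrary inscribed ball can be reduced to this case by the planar argument on sections through $L$, exactly as in the Euclidean proof of \eqref{eq:euStein_ir}); hence $\ir(\sigma(P))\ge\ir(P)$, and combining with \eqref{eq:hypStmain1} gives $\ir(P)\le\ir(\sigma(P))\le\ir(\conv([p_1,p_2]\cup\{\pi(p_3),\dots,\pi(p_n)\}))$.

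It remains to handle the equality cases. In each of the three inequalities, equality forces equality in the planar Lemma~\ref{lem:hyp_planar} applied to every section of $P$ by a plane through $L$, which by the equality clause of that lemma means every such section is symmetric to $H$; since these sections sweep out all of $P$, this is exactly the assertion that $P$ is symmetric to $H$. For \eqref{eq:hypStmain1} and \eqref{eq:hypStmain2} one additionally notes that if $P$ is symmetric to $H$ then $\sigma(P)=P=\conv([p_1,p_2]\cup\{\pi(p_3),\dots,\pi(p_n)\})$, so equality indeed holds; similarly for \eqref{eq:hypStir}.

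The main obstacle I anticipate is the careful bookkeeping in the section-by-section argument for \eqref{eq:hypStmain1}: one must verify that the $g$-orthogonal projections $\pi(q)$ of the relative boundary points $q$ of the sections $F\cap H'$ are all controlled by the finitely many projected vertices $\pi(p_3),\dots,\pi(p_n)$, i.e.\ that $\conv(F\cap H')$ projects into $\conv\{\pi(p_3),\dots,\pi(p_n)\}$ under $\pi$. This holds because $\pi$ restricted to $H'$ is essentially a projective map (in the projective ball model $g$-lines with axis $L$ are represented by a pencil of Euclidean lines, so $\pi|_{H'}$ is the restriction of a projectivity), hence maps the convex polytope $\conv\{p_3,\dots,p_n\}\subseteq H'$ onto the convex polytope $\conv\{\pi(p_3),\dots,\pi(p_n)\}$, carrying relative boundary to relative boundary; but making this rigorous without leaning too heavily on the model requires some care. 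A secondary subtlety is the quadrangle case of Remark~\ref{rem:convhull}, where one must split along the diagonal through a point of $[p_1,p_2]$ so that Lemma~\ref{lem:hyp_planar} — which is stated for triangles with the symmetrization axis along one edge — applies to each piece.
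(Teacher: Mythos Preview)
Your overall structure is essentially the paper's: slice $P$ by planes through $L$, apply Lemma~\ref{lem:hyp_planar} in each slice, and then pass from the projected boundary points to the projected vertices; volume and inradius then follow from volume-preservation and the ball-to-ball property of $\sigma$. The one substantive point where you diverge is exactly the step you flag as the ``main obstacle'', namely the containment
\[
\pi\bigl(\conv\{p_3,\ldots,p_n\}\bigr)\ \subseteq\ \conv\{\pi(p_3),\ldots,\pi(p_n)\},
\]
and your proposed resolution is incorrect. You assert that in the projective ball model the $g$-lines with axis $L$ form a pencil of Euclidean lines, so that $\pi|_{H'}$ is a projectivity. This is false: in the Klein model with $L$ a diameter, the hypercycles equidistant from $L$ in a plane through $L$ are arcs of ellipses (e.g.\ in $\HH^2$ with $L$ the $x$-axis one gets $y=\tanh(d_0)\sqrt{1-x^2}$), not Euclidean straight lines. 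Consequently $\pi|_{H'}$ is not a projective map, and your argument for carrying the convex hull across collapses.

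The paper closes this gap differently: it invokes \cite[Lemma~10]{P02}, which says that for any convex set $X\subset H$ containing $L\cap H$, the preimage $\pi^{-1}(X)$ is convex. Taking $X=\conv\{\pi(p_3),\ldots,\pi(p_n)\}$ (which does contain $L\cap H$, since $[p_1,p_2]\cap H'$ lies in $\conv\{p_3,\dots,p_n\}$ and projects to $L\cap H$) gives that $\pi^{-1}(X)$ is convex and contains each $p_i$, hence contains $\conv\{p_3,\ldots,p_n\}$; applying $\pi$ yields the desired inclusion. Once you replace your projectivity argument with this citation, the rest of your proof goes through and matches the paper's.
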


\begin{proof}
Note that $P = \bigcup_F (F \cap P)$, where $F$ runs over the planes containing $L$, and these intersections are mutually disjoint apart from $[p_1,p_2]$.
Let $F$ be a plane with $L \subset P$. By Remark~\ref{rem:convhull}, $F \cap P = \conv \{ p_1,p_2, q \}$ for some relative boundary point $q$ of $\conv \{ p_3, \ldots, p_n \}$. By Lemma~\ref{lem:hyp_planar}, $\sigma(F \cap P) \subseteq \conv( \sigma([p_1,p_2]) \cup \pi(q) )$. Thus, we have
$\sigma(P) \subseteq \conv( \sigma([p_1,p_2]) \cup \pi (\conv \{ p_3,\ldots, p_n \})$. On the other hand, the relation
$\pi (\conv(p_3,\ldots, p_n)) \subseteq \conv \{ \pi(p_3), \ldots, \pi(p_n)$ follows from \cite[Lemma 10]{P02}, which states that if $X$ is a convex set in $H$ containing the intersection point of $H$ and $L$, then $\pi^{-1}(X)$ is convex. Thus, we have
\[
\sigma(P) \subseteq \conv (\sigma([p_1,p_2]) \cup \{ \pi(p_3), \ldots, \pi(p_n) \} ).
\]
Here, by Lemma~\ref{lem:hyp_planar}, there is equality if and only if all $p_i$ with $i \geq 3$ are contained in $H$. The second inequality in Lemma~\ref{lem:hyp_main} readily follows from the first one and the fact that hyperbolic volume is invariant under Steiner symmetrization \cite{P02}. The inequality in (\ref{eq:hypStir}), together with the equality case, follows from the fact that the Steiner symmetral of a ball of radius $r$ is a ball of radius $r$ \cite[Proposition 8]{P02}.
\end{proof}

\subsection{Spherical Steiner symmetrization}

\begin{defn}\label{defn:sphSteiner}
Let $H$ be a hyperplane in $\Sph^d$.
The \emph{Steiner symmetrization $\sigma$} with respect to $H$ is the geometric transformation which assigns to any convex body $K \subset \Sph^d$, disjoint from the two poles of $H$, the unique compact set $K'$ symmetric to $H$ with the property that for any open half circle $L$ orthogonal to $H$ and ending at the two poles of $H$, $K \cap L$ and $K' \cap L$ are both nondegenerate segments of equal length, or singletons, or the empty set (see Figure~\ref{fig:Sph_Steiner}).  In this case we call the set $\sigma(K)$ the \emph{Steiner symmetral} of $K$.
\end{defn}

\begin{figure}[ht]
\begin{center}
\includegraphics[width=0.4\textwidth]{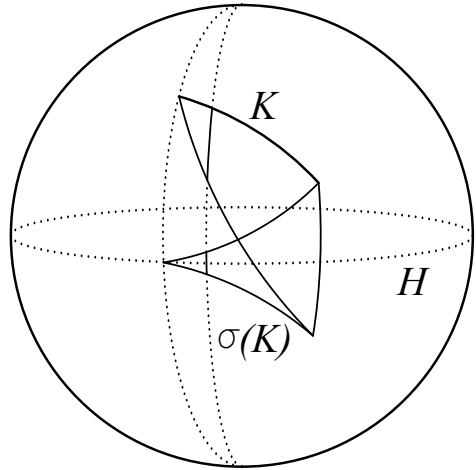}
\caption{Spherical Steiner symmetrization of a triangle $K$ with respect to the spherical line $H$. Dotted lines denote $H$ and half circles perpendicular to it.}
\label{fig:Sph_Steiner}
\end{center}
\end{figure}

We observe that a convex body $K$ disjoint from the poles of $H$ is contained in an open hemisphere whose center lies in $H$; by our definition $\sigma(K)$ is contained in the same open hemisphere. Our next remark is the spherical counterpart of \cite[Lemma 10]{P02}.

\begin{rem}\label{rem:sph_preimage}
Let $H$ be a hyperplane in $\Sph^d$ with poles $\pm p$, and let $\pi:\Sph^d \setminus \{ \pm p\} \to H$ denote the orthogonal projection onto $H$. Then for any convex set $K \subset H$, the set $\pi^{-1}(K)$ is convex.
\end{rem}



The proof of our next lemma can be found in \cite{Boroczky}.

\begin{lem}\label{lem:sph_planar}
Let $H$ be the bisector of the segment $[p_1,p_2] \in \Sph^2$, and let $p_3$ be different from the poles of $H$. Let $\pi$ and $\sigma$ denote the orthogonal projection onto $H$ and the Steiner symmetrization with respect to $H$, respectively. Then
\[
\sigma (\conv\{ p_1,p_2,p_3 \}) \subseteq \conv \{p_1, p_2, \pi(p_3) \},
\]
with equality if and only if $p_3 \in H$. 
\end{lem}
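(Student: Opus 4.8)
The plan is to pass to coordinates adapted to the symmetrization, reduce the asserted inclusion to a one–variable inequality comparing the lengths of the sections of $T:=\conv\{p_1,p_2,p_3\}$ and of $T':=\conv\{p_1,p_2,\pi(p_3)\}$ by the half–circles defining $\sigma$, and then prove that inequality by an explicit elementary estimate; the inclusion then follows meridian by meridian, exactly as in the Euclidean and hyperbolic cases (Lemmas~\ref{lem:eu_main} and~\ref{lem:hyp_main}). Concretely, I would view $\Sph^2$ as the unit sphere in $\Eu^3$, rotate so that $H$ is the equator with poles $(0,0,\pm 1)$, and describe a point of $\Sph^2$ off the poles by $(\theta,\varphi)$, with $\theta$ its longitude and $\varphi\in(-\pi/2,\pi/2)$ its latitude. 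Since $H$ bisects $[p_1,p_2]$, the segment $[p_1,p_2]$ runs along a meridian; rotating about the polar axis I may take it to be the meridian $\theta=0$, so $p_1=(0,a)$, $p_2=(0,-a)$ with $0<a<\pi/2$, $p_3=(\tau,\varphi_3)$ and $\pi(p_3)=(\tau,0)$. Assuming (as is implicit in the statement) that $T$ is a non-degenerate triangle, disjoint from the poles of $H$ so that $\sigma(T)$ is defined, we have $\tau\neq 0$, and after reflecting longitude if necessary $0<\tau<\pi$. Because $T$ lies in an open hemisphere centred at a point of $H$, all longitudes occurring in $T$ fill an arc of length $<\pi$ containing $0$; from this one reads off that the longitude ranges of $T$ and of $T'$ both equal $[0,\tau]$, and that each of the edges $[p_1,p_3],[p_2,p_3]$ (and each edge of $T'$) is a graph of latitude over $\theta\in[0,\tau]$ meeting every meridian $m_\theta$, $\theta\in(0,\tau)$, in exactly one point (a geodesic segment in an open hemisphere meets such a half–circle at most once).

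Next I would reduce the inclusion to a sectional inequality. For $\theta\in(0,\tau)$ the arc $T\cap m_\theta$ has one endpoint on $[p_1,p_3]$ and one on $[p_2,p_3]$, the third edge lying in the meridian $\theta=0$; writing $\varphi_i(\theta)$ for the latitude of the point of $[p_i,p_3]$ on $m_\theta$ and $\psi_i(\theta)$ for the analogous quantity in $T'$, we get $T\cap m_\theta=\{(\theta,\varphi):\varphi_2(\theta)\le\varphi\le\varphi_1(\theta)\}$ and $T'\cap m_\theta=\{(\theta,\varphi):\psi_2(\theta)\le\varphi\le\psi_1(\theta)\}$. As arclength along a meridian is a difference of latitudes, the definition of $\sigma$ (Definition~\ref{defn:sphSteiner}) gives $\sigma(T)\cap m_\theta=\{(\theta,\varphi):2|\varphi|\le\varphi_1(\theta)-\varphi_2(\theta)\}$. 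The structural point is that $T'$ is symmetric in $H$, since reflection in $H$ interchanges $p_1,p_2$ and fixes $\pi(p_3)$; hence $\psi_2=-\psi_1$ and $T'\cap m_\theta=\{(\theta,\varphi):|\varphi|\le\psi_1(\theta)\}$. Therefore $\sigma(T)\subseteq T'$ is equivalent to
\[
\varphi_1(\theta)-\varphi_2(\theta)\ \le\ 2\,\psi_1(\theta)\qquad(0\le\theta\le\tau),
\]
and, since the meridian $\theta=0$ contributes the common section $[p_1,p_2]$ to both bodies, equality $\sigma(T)=T'$ holds precisely when the displayed inequality is an equality for every $\theta$.

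The main step is this sectional inequality. A great circle avoiding the poles $(0,0,\pm1)$ is a graph $\tan\varphi=A\cos\theta+B\sin\theta$; imposing the endpoint conditions on the three relevant edges and simplifying, I expect to obtain on $[0,\tau]$ the compact formulas
\[
\tan\varphi_1=P+Q,\qquad \tan\varphi_2=Q-P,\qquad \tan\psi_1=P,
\]
where $P=P(\theta)=\dfrac{\tan a\,\sin(\tau-\theta)}{\sin\tau}\ge 0$ and $Q=Q(\theta)=\dfrac{\tan\varphi_3\,\sin\theta}{\sin\tau}$. Then $\varphi_1\ge\varphi_2$, and the inequality to be proved reads
\[
\arctan(P+Q)-\arctan(Q-P)\ \le\ 2\arctan P .
\]
For fixed $P\ge 0$ the function $g(Q):=\arctan(P+Q)-\arctan(Q-P)$ has $g'(Q)=\dfrac{1}{1+(P+Q)^2}-\dfrac{1}{1+(Q-P)^2}$, whose sign is that of $-PQ$; hence $g$ attains its maximum at $Q=0$, where $g(0)=2\arctan P$. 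This proves the inequality. Moreover $P(\theta)>0$ for $\theta\in(0,\tau)$, so equality throughout forces $Q\equiv 0$, i.e.\ $\tan\varphi_3=0$, i.e.\ $p_3\in H$; conversely if $p_3\in H$ then $\pi(p_3)=p_3$ and $T$ is already symmetric in $H$, whence $\sigma(T)=T=T'$. This settles the equality case of the lemma.

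I expect the only genuinely non-routine point to be the last display: seeing that the adapted parametrization collapses the three edges into the two scalar functions $P,Q$ and that the claim then becomes the clean one‑parameter inequality $\arctan(P+Q)-\arctan(Q-P)\le 2\arctan P$, which is trivial once spotted. A more synthetic route — sliding $p_3$ toward $H$ along its meridian and arguing that every meridian section of the resulting triangle shrinks — founders on the fact that the two endpoints of such a section move in the same direction, so monotonicity of the section length is not transparent, and making it precise seems to require essentially the computation above. Accordingly I would present the direct argument.
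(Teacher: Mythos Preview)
Your argument is correct. The paper does not supply its own proof of this lemma; it simply refers to B\"or\"oczky~\cite{Boroczky}. Your coordinate computation---parametrizing great circles off the poles as $\tan\varphi=A\cos\theta+B\sin\theta$, obtaining $\tan\varphi_1=P+Q$, $\tan\varphi_2=Q-P$, $\tan\psi_1=P$ with $P=\tan a\,\sin(\tau-\theta)/\sin\tau\ge 0$ and $Q=\tan\varphi_3\,\sin\theta/\sin\tau$, and then reducing the sectional containment to $\arctan(P+Q)-\arctan(Q-P)\le 2\arctan P$, which you verify by observing that $g(Q)=\arctan(P+Q)-\arctan(Q-P)$ has $g'(Q)$ of sign $-PQ$ and hence a unique maximum $g(0)=2\arctan P$---is a clean, self-contained substitute for the citation. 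The equality analysis is also right: on $(0,\tau)$ one has $P(\theta)>0$, so equality throughout forces $Q\equiv 0$, i.e.\ $\varphi_3=0$. One small remark: the claim that the longitude range of $T$ is exactly $[0,\tau]$ is most directly justified via Remark~\ref{rem:sph_preimage} (the $\pi$-preimage of the short arc of $H$ from $(0,0)$ to $(\tau,0)$ is convex and contains all three vertices, hence $T$), which is slightly more immediate than the open-hemisphere route you sketch.
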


\begin{lem}\label{lem:sph_main}
Let $P \subset \Sph^d$ be a convex polytope with vertices $p_1,p_2, \ldots, p_n$. Assume that for some hyperplane $H' \subset \Sph^d$ intersecting $[p_1,p_2]$, we have $p_3, \ldots, p_n \in H'$. Let $H$ be the bisector of $[p_1,p_2]$, and assume that $P$ is disjoint from the poles of $H$. Let $\pi : \Sph^d \to H$ denote the orthogonal projection onto $H$, and let $\sigma$ be the Steiner symmetrization with respect to $H$.
Then
\begin{equation}\label{eq:sphStmain1}
\sigma(P) \subseteq \conv ([p_1,p_2] \cup \{ \pi(p_3), \ldots, \pi(p_n) \} ), \hbox{ and}
\end{equation}
\begin{equation}\label{eq:sphStmain2}
\vol_d(P) \leq \vol_d(\sigma(P)) \leq \vol_d(\conv ([p_1,p_2] \cup \{ \pi(p_3), \ldots, \pi(p_n) \} ))
\end{equation}
with equality in any of (\ref{eq:sphStmain1}) and (\ref{eq:sphStmain2}) if and only if $P$ is symmetric to $H$. 
\end{lem}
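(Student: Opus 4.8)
The plan is to follow the proof of Lemma~\ref{lem:hyp_main} almost verbatim, with the half great circles orthogonal to $H$ and ending at its two poles $\pm p$ playing the role of the $g$-lines with axis $L$. Let $L$ be the great circle through $p_1$ and $p_2$. Since $H$ is the perpendicular bisector of $[p_1,p_2]$, the circle $L$ is orthogonal to $H$, hence passes through $\pm p$, and $[p_1,p_2]$ lies on one of these half great circles. The first step is to write $P=\bigcup_F (F\cap P)$, where $F$ runs over the great $2$-spheres of $\Sph^d$ containing $L$; two distinct such spheres meet exactly along $L$, so these slices overlap only in $L\cap P$. Each such $F$ contains $\pm p$, and the half great circles through $\pm p$ lying in $F$ are precisely the half great circles of $F$ orthogonal to the great circle $F\cap H$ and ending at its poles; therefore $\sigma$ restricts on $F$ to the planar spherical Steiner symmetrization with respect to $F\cap H$, $\pi$ restricts to the orthogonal projection of $F$ onto $F\cap H$, and $\sigma(P)\cap F=\sigma(F\cap P)$.

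Next I would analyze a single slice. Set $Q=\conv\{p_3,\ldots,p_n\}\subseteq H'$. By Remark~\ref{rem:convhull} (valid in $\Sph^d$ by the central projection argument recalled in Section~\ref{sec:prelim}), for each $F$ the slice $F\cap P$ is the convex hull of $p_1$, $p_2$ and the chord $F\cap Q$ of the great circle $F\cap H'$; generically this is a spherical triangle $\conv\{p_1,p_2,q\}$ with $q$ a relative boundary point of $Q$, and for the exceptional quadrilateral slices one triangulates along the diagonal $[p_1,p_2]$ into two triangles, each of which still has $p_1,p_2$ among its vertices, so that Lemma~\ref{lem:sph_planar} again applies and the same estimate goes through. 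Applying Lemma~\ref{lem:sph_planar} inside $F$ gives $\sigma(F\cap P)\subseteq\conv\{p_1,p_2,\pi(q)\}$, with equality if and only if $q\in H$. Taking the union over all $F$ yields $\sigma(P)\subseteq\conv([p_1,p_2]\cup\pi(Q))$; and since $Q$ is convex and contained in $\pi^{-1}(\conv\{\pi(p_3),\ldots,\pi(p_n)\})$, which is convex by Remark~\ref{rem:sph_preimage}, we get $\pi(Q)\subseteq\conv\{\pi(p_3),\ldots,\pi(p_n)\}$. This proves (\ref{eq:sphStmain1}). For the equality characterization: if every $p_i$ with $i\ge 3$ lies in $H$ then the reflection in $H$ fixes each of them and swaps $p_1$ with $p_2$, so $P$ is symmetric to $H$; conversely, if $P$ is symmetric to $H$ then $\sigma(P)=P$ and each $\pi(p_i)$, lying on the arc between $p_i$ and its mirror image, belongs to $P$, whence $\conv([p_1,p_2]\cup\{\pi(p_3),\ldots,\pi(p_n)\})\subseteq P=\sigma(P)$ and (\ref{eq:sphStmain1}) is an equality; and if $P$ is not symmetric, some slice has $q\notin H$, making the inclusion strict.

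For (\ref{eq:sphStmain2}): the right-hand inequality is immediate from (\ref{eq:sphStmain1}) and monotonicity of $\vol_d$, and since both sides are convex bodies, equality forces $\sigma(P)$ to coincide with $\conv([p_1,p_2]\cup\{\pi(p_3),\ldots,\pi(p_n)\})$, which by the previous paragraph happens exactly when $P$ is symmetric to $H$. The left-hand inequality $\vol_d(P)\le\vol_d(\sigma(P))$ is the volume-monotonicity of spherical Steiner symmetrization established in \cite{Boroczky}; its equality case is seen slice by slice, using $\sigma(P)\cap F=\sigma(F\cap P)$ together with the elementary fact (computed directly from the spherical area element) that re-centering a chord of fixed length onto the great circle $F\cap H$ strictly increases the spherical area it sweeps out unless the chord is already centered there, so equality throughout forces every slice to be symmetric about $F\cap H$, i.e. $P$ to be symmetric to $H$.

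The only feature genuinely new relative to Lemma~\ref{lem:hyp_main} is that spherical Steiner symmetrization enlarges rather than preserves volume, so both inequalities in (\ref{eq:sphStmain2}) point the same way and both must be invoked; the rest is a faithful transcription of the slicing argument, and the main care needed is bookkeeping: checking that $L$ passes through the poles of $H$, that $P$ (being disjoint from those poles) meets each relevant half great circle in a proper segment so that Lemma~\ref{lem:sph_planar} and Definition~\ref{defn:sphSteiner} apply on every slice, and that the quadrilateral slices are handled as indicated. I expect the quadrilateral-slice bookkeeping to be the only place where a little work beyond the hyperbolic argument is required.
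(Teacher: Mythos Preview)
Your proof is correct and follows essentially the same slicing strategy as the paper's own proof: decompose $P$ via the $2$-spheres through $L$, apply Remark~\ref{rem:convhull} and Lemma~\ref{lem:sph_planar} on each slice, then use Remark~\ref{rem:sph_preimage} to pass from $\pi(Q)$ to $\conv\{\pi(p_3),\ldots,\pi(p_n)\}$. Your treatment is in fact more careful than the paper's in two places---the explicit handling of quadrilateral slices and the equality analysis for the inequality $\vol_d(P)\le\vol_d(\sigma(P))$---both of which the paper glosses over.
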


\begin{proof}
By Remark~\ref{rem:convhull} and Lemma~\ref{lem:sph_planar}, $\sigma(P) \subseteq \conv ([p_1,p_2] \cup \pi(P) )$.
On the other hand, by Remark~\ref{rem:sph_preimage}, $\pi(P) \subseteq \conv \{ q, \pi_H(p_3), \ldots, \pi_H(p_n) \}$, where $q$ is the midpoint of $[p_1,p_2]$. Thus, we have $\sigma(P) \subseteq \conv ([p_1,p_2] \cup \{ \pi(p_3), \ldots, \pi(p_n) \} )$, where, by Lemma~\ref{lem:sph_planar}, we have equality if and only if $p_3,\ldots,p_n \subset H$, that is, if $P$ is symmetric to $H$. This, and the fact that spherical volume does not decrease under Steiner symmetrization implies (\ref{eq:sphStmain2}).
\end{proof}

\section{Main results}\label{sec:results}

We start with Theorem~\ref{thm:inscribedvol}, which was proved for $\M = \Eu^d$ in \cite{AZ16}.

\begin{thm}\label{thm:inscribedvol}
Let $\M \in \{ \Eu^d, \HH^d, \Sph^d \}$. Let $B \subset \M$ be a ball, and let $P \subset B$ be a convex polytope with $d+2$ vertices. Then
\[
\vol_d (P) \leq  \vol_d (Q),
\]
where $Q = \conv (S_1 \cup S_2)$ for some regular simplices $S_1, S_2$ inscribed in $B$, with $\dim S_1=k$ and $\dim S_2 = d-k$ for some $1 \leq k \leq \left\lfloor \frac{d}{2} \right\rfloor$,
and contained in mutually orthogonal subspaces of $\M$ intersecting at the center of $B$. Furthermore, for any $d > 0$ there is a value $\varepsilon > 0$ such that
if the radius of $B$ is at most $\varepsilon$, or $\M = \Eu^d$, then $k = \left\lfloor \frac{d}{2} \right\rfloor$.
\end{thm}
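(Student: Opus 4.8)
The plan is to combine the combinatorial decomposition of Remark~\ref{rem:combin} with the three Steiner symmetrization lemmas (Lemmas~\ref{lem:eu_main}, \ref{lem:hyp_main}, \ref{lem:sph_main}) and an induction/compactness argument. First I would fix a ball $B$ and take a volume-maximal polytope $P \subset B$ with $d+2$ vertices, which exists by compactness of the relevant configuration space (after noting that in $\Sph^d$ we may assume $P$ lies in an open hemisphere, and that a maximizer cannot have fewer than $d+2$ vertices since adding a vertex does not decrease volume). By Remark~\ref{rem:combin}, $P = \conv(S_1 \cup S_2)$ with $\dim S_1 = k$, $\dim S_2 = d-k$, $S_1 \cap S_2 = \{q\}$ a common relative interior point. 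The strategy is to show that a maximizer must be \emph{symmetric} with respect to the bisecting hyperplane of every edge $[p_1, p_2]$ of $P$; symmetry across all such bisectors forces each $S_i$ to be regular, and forces the two carrier subspaces to meet orthogonally at the center of $B$ (a symmetric simplex inscribed in $B$ must have $q$ at the center, and the mutual orthogonality comes from symmetry across bisectors of edges joining $S_1$ to $S_2$).

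The key step is the symmetrization move. Given an edge $[p_1,p_2]$ of $P$, Remark~\ref{rem:intersection} (applied to the decomposition of Remark~\ref{rem:combin}, viewing $P$ as the convex hull of the simplex $S_1$ and the simplex $\conv(S_2 \cup \{\text{other vertices}\})$, as noted after Remark~\ref{rem:combin}) supplies a hyperplane $H'$ through the remaining $d$ vertices that meets $[p_1,p_2]$. This is exactly the hypothesis needed to invoke the appropriate Steiner symmetrization lemma with $H$ the bisector of $[p_1,p_2]$: in $\Eu^d$, $\sigma(P) = \conv([p_1,p_2] \cup \{\pi(p_i)\})$ has the same volume (Steiner symmetrization preserves volume in $\Eu^d$) and is inscribed in the Steiner symmetral of $B$, which is again a ball of radius $\cirr(B)$ centered on $H$; in $\HH^d$ and $\Sph^d$ one uses \eqref{eq:hypStmain2} and \eqref{eq:sphStmain2} respectively, where the bound is now by $\vol_d(\conv([p_1,p_2] \cup \{\pi(p_3),\ldots,\pi(p_n)\}))$, which still lies in a ball of radius $\cirr(B)$. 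Either way one produces a polytope of volume $\geq \vol_d(P)$, inscribed in a congruent ball, and hence (by maximality and the equality conditions in those lemmas) $P$ must already be symmetric across $H$. Iterating over all edges yields the full symmetry, and then a direct geometric argument identifies the extremal configuration as $\conv(S_1 \cup S_2)$ with $S_1, S_2$ regular, inscribed in $B$, spanning orthogonal subspaces through the center.

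For the last clause — that for small enough radius (and always in $\Eu^d$) the optimal split is $k = \lfloor d/2 \rfloor$ — the plan is a comparison computation among the finitely many candidate configurations $Q_k = \conv(S_1 \cup S_2)$, $1 \le k \le \lfloor d/2 \rfloor$, one for each admissible dimension split. In $\Eu^d$ this is a clean determinant/volume computation: writing the vertices explicitly as scaled orthonormal-simplex vertices in two orthogonal blocks, one shows $\vol_d(Q_k)$ is strictly increasing in $k$ on $[1, \lfloor d/2\rfloor]$, so the maximum is at $k = \lfloor d/2 \rfloor$. For $\HH^d$ and $\Sph^d$, one passes to the small-ball regime: after rescaling, hyperbolic/spherical volumes of configurations of circumradius $\varepsilon$ converge, as $\varepsilon \to 0$, to the Euclidean volume of the corresponding Euclidean configuration (with a uniform error term, e.g. from the power-series expansion of the volume density in normal coordinates), so for $\varepsilon$ small the strict Euclidean inequality $\vol_d(Q_{\lfloor d/2\rfloor}) > \vol_d(Q_k)$ persists, pinning down $k$.

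I expect the main obstacle to be the bookkeeping around the equality cases and the iteration: one must be careful that forcing symmetry across the bisector of one edge does not destroy the combinatorial type or the inscribed-in-$B$ property needed to symmetrize across the next edge, and that the whole process terminates at a configuration symmetric across \emph{all} edge bisectors simultaneously (a standard but slightly delicate compactness-plus-group-generated-by-reflections argument). A secondary technical point is making the small-ball limit in $\HH^d$ and $\Sph^d$ uniform over the finitely many combinatorial types, which requires controlling the volume density uniformly on a fixed-size normal coordinate chart — routine, but it is where the hypothesis ``radius at most $\varepsilon$'' is actually consumed.
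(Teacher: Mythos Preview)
Your strategy matches the paper's almost exactly: take a volume maximizer, use Remark~\ref{rem:combin} to write it as $\conv(S_1\cup S_2)$, and for each edge use Remark~\ref{rem:intersection} plus the relevant Steiner lemma to force symmetry across the bisector. Two points deserve correction.

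First, your handling of the Euclidean case has a genuine (if small) gap. In $\Eu^d$, Lemma~\ref{lem:eu_main} gives $\vol_d(\sigma(P))=\vol_d(P)$ with \emph{no} equality clause on volume, so ``maximality and the equality conditions in those lemmas'' does not force $P$ to be symmetric across $H$. The paper closes this as follows: one first assumes all vertices lie on $\partial B$ (harmless for a maximizer), so the bisector $H$ of $[q_1,q_2]$ passes through the center of $B$; then if $P$ is not symmetric to $H$, some $p_i$ with $i\ge 3$ lies off $H$, hence $\pi(p_i)\in\inter(B)$, and the symmetrized polytope $P'$ has the same volume but a vertex strictly inside $B$, which can be pushed outward to strictly increase volume --- contradicting maximality. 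You need this extra step.

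Second, your anticipated obstacle about iteration is a non-issue and reflects a slight misreading of the argument. One never actually performs a sequence of symmetrizations; one fixes a single maximizer $P$ and, for each edge in turn, derives a contradiction from the assumption that $P$ is \emph{not} already symmetric across that edge's bisector. So there is no worry about one symmetrization ``destroying the combinatorial type'' for the next. Relatedly, note that Remark~\ref{rem:intersection} is stated for two vertices of the \emph{same} simplex $S_i$, so you should invoke it only for edges within $S_1$ or within $S_2$ (as the paper does), not for cross-edges; symmetry across those bisectors alone already forces each $S_i$ to be regular with center at the center of $B$ and the two carrier subspaces to be orthogonal.
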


\begin{proof}
By compactness argument, in the family of convex polytopes $P \subset B$ with at most $d+2$ vertices, $\vol_d ( \cdot)$ attains its maximum, and any polytope with maximal volume has exactly $d+2$ vertices. Let $P$ be such a polytope. Then, by Remark~\ref{rem:combin}, $P = \conv (S_1 \cup S_2)\subset B$ for some simplices $S_1, S_2$ of dimensions $k$ and $d-k$, respectively, where $1 \leq k \leq \left\lfloor \frac{d}{2} \right\rfloor$, and the subspaces spanned by them intersect in a single common point $p$ of $S_1$ and $S_2$.
Let $L_1$ (resp. $L_2$) denote the $k$-dimensional (resp. $(d-k)$-dimensional) subspace spanned by $S_1$ (resp. $S_2$).
Note that without loss of generality, we may assume that every vertex of $P$ lies on the boundary of $B$.

Consider an edge $[q_1,q_2]$ of $S_1$. We show that $P$ is symmetric to the bisector $H$ of $[q_1,q_2]$.

Suppose for contradiction that $P$ is not symmetric to $H$. Let us choose a hyperplane $H'$ containing $V(P) \setminus \{ q_1, q_2 \}$ such that $H'$ intersects $[q_1,q_2]$, and recall that by Remark~\ref{rem:intersection}, such a hyperplane exists. Let $L$ denote the line through $[q_1,q_2]$.
Note that since $P$ is inscribed in $B$, the bisector $H$ of $[q_1,q_2]$ passes through the center of $B$. Let $\sigma$ denote the Steiner symmetrization with respect to $H$ (or if $\M=\HH^d$, with respect to $H$ and with axis $L$), and let $\pi$ denote the orthogonal projection onto $H$.

Let $P' = \conv ([q_1,q_2] \cup \pi (V(P) \setminus \{ q_1, q_2 \}))$. Then, for $\M = \Sph^d$ and $\M = \HH^d$, by Lemmas~\ref{lem:sph_main} and \ref{lem:hyp_main}, respectively, we have that $\vol_d(P) < \vol_d (P')$, contradicting our assumption that $P$ has maximal volume. In the remaining case, if $\M = \Eu^d$, then $P'$ is a convex polytope in $B$ with $(d+2)$ vertices satisfying $\vol_d(P') = \vol_d(P)$ such that at least one vertex of $P'$ lies in the interior of $B$. Thus, there is a convex polytope $P'' \subset B$ with $(d+2)$ vertices and satisfying $\vol_d(P'') > \vol_d(P)$; a contradiction.

We have shown that $P$ is symmetric to the bisector of the edge $[q_1,q_2]$ of $S_1$. We obtain similarly that $P$ is symmetric to any edge of $S_1$ and $S_2$, implying that $S_1$ and $S_2$ are regular simplices inscribed in $B$ contained in orthogonal subspaces such that the centers of $S_1$, $S_2$ and $B$ coincide.

The fact that if $\M = \Eu^d$, the volume of $\conv (S_1 \cup S_2)$ is maximal if $k= \left\lfloor \frac{d}{2} \right\rfloor$ follows by a simple computation (see, e.g. \cite{AZ16}). The existence of some $\varepsilon > 0$ depending on $d$ only, such that for any ball $B$ of radius at most $\varepsilon$, the volume of $\conv (S_1 \cup S_2)$ is maximal if $k= \left\lfloor \frac{d}{2} \right\rfloor$ follows from the well known observation that in `small scale', hyperbolic and spherical spaces are `almost' Euclidean, and can be proved by a standard limit argument.
\end{proof}

Our next two theorems deal with the volumes of polytopes in Euclidean and hyperbolic spaces, containing a given ball.

\begin{thm}\label{thm:circumscribed_dp1}
Let $\M \in\{ \Eu^d, \HH^d \}$ and let $B \subset \M$ be a ball. Then, among simplices containing $B$, the ones with maximal volume are the regular 
simplices circumscribed about $B$.
\end{thm}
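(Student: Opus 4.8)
The plan is to show that every simplex $S$ containing $B$ satisfies $\vol_d(S) \geq \vol_d(R_0)$, where $R_0$ denotes the regular simplex circumscribed about $B$, with equality only when $S$ is isometric to $R_0$ (a simplex containing $B$ can be arbitrarily elongated, so in this family volume is bounded below, not above, and these are exactly the extremal simplices). I would first reduce to simplices circumscribed about $B$: if the facets of $S$ are not all tangent to $B$, push each facet hyperplane inward along its common perpendicular with the center of $B$ until it becomes tangent to $B$. The resulting $d+1$ hyperplanes keep their outward normals, hence still positively span and bound a simplex; this simplex contains $B$, is contained in $S$, and has volume no larger than that of $S$, strictly smaller unless $S$ is already circumscribed about $B$. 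Thus it suffices to treat simplices with $\ir(S)=r$, the radius of $B$, and $B$ as their insphere.

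The engine of the argument is the monotonicity in Lemmas~\ref{lem:eu_main} and~\ref{lem:hyp_main}. For an edge $[q_1,q_2]$ of $S$ the remaining $d-1$ vertices span a $(d-2)$-flat, so there is a hyperplane $H'$ through them meeting $[q_1,q_2]$ (the simplex version of Remark~\ref{rem:intersection}); hence those lemmas apply, the symmetral $\sigma(S)$ taken in the bisector $H$ of $[q_1,q_2]$ (with axis the line of $[q_1,q_2]$ in the hyperbolic case) is again a simplex, and $\vol_d(\sigma(S))=\vol_d(S)$. Since $\sigma$ is inclusion-monotone and carries a radius-$r$ ball to a radius-$r$ ball, $\sigma(B)\subseteq\sigma(S)$ gives $\ir(\sigma(S))\geq r$. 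I would then symmetrize cyclically in the bisectors of all $\binom{d+1}{2}$ edges: the volume is preserved and an inscribed ball of radius at least $r$ is maintained, and because the simplices of a fixed volume with inradius at least $r$ form a family that is compact modulo isometry, the iterates subconverge to a simplex $R$ invariant under every edge-bisector symmetrization. Invariance forces each remaining vertex onto the corresponding bisector, i.e. $d(q_i,q_k)=d(q_j,q_k)$ for all triples, which makes all edge lengths equal; thus $R$ is regular, with $\vol_d(R)=\vol_d(S)$ and $\ir(R)\geq r$. As the volume of a regular $d$-simplex is strictly increasing in its inradius in both $\Eu^d$ and $\HH^d$, we conclude $\vol_d(S)=\vol_d(R)\geq\vol_d(R_0)$.

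For the equality case one wants that $\vol_d(S)=\vol_d(R_0)$ forces $\ir(R)=r$, and then that no edge-bisector symmetrization ever strictly enlarged the inradius, so that $S$ was already symmetric in every bisector and hence equal to $R_0$. Here lies the main obstacle, and it is specifically hyperbolic: the equality clause of Lemma~\ref{lem:hyp_main} is stated for the projected hull $\conv([q_1,q_2]\cup\{\pi(q_3),\ldots\})$, which strictly contains $\sigma(S)$, so it does not immediately yield a \emph{strict} inradius gain for $\sigma(S)$ itself when $S$ is asymmetric; and, unlike in $\Eu^d$, there is no homothety of $\HH^d$ with which to rescale a simplex of inradius exceeding $r$ back down to inradius $r$. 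I expect to close this either by sharpening the inradius estimate for $\sigma(S)$ directly, or by replacing rescaling with the inward facet-push of the first step to produce a strictly smaller competitor, and then verifying throughout that the symmetrals $\conv([q_1,q_2]\cup\{\pi(q_3),\ldots,\pi(q_{d+1})\})$ remain nondegenerate simplices.
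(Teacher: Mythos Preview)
Your scheme is fine in $\Eu^d$, but in $\HH^d$ the gap occurs earlier and is more serious than the equality-case difficulty you isolate at the end. You assert that the hyperbolic symmetral $\sigma(S)$ is again a simplex; it is not. Lemma~\ref{lem:hyp_main} says precisely that $\sigma(S)\subsetneq\conv\bigl([q_1,q_2]\cup\{\pi(q_3),\dots,\pi(q_{d+1})\}\bigr)$ whenever $S$ is asymmetric about $H$, so $\sigma(S)$ has the right volume and contains a ball of radius $r$ but has left the class of simplices. If instead you pass to the simplex hull $S'=\conv\bigl([q_1,q_2]\cup\{\pi(q_3),\dots\}\bigr)$ to stay in the class, you still have $\ir(S')\geq r$, but now $\vol_d(S')>\vol_d(S)$; your iteration then produces simplices of \emph{increasing} volume with inradius at least $r$, and the regular limit $R$ satisfies only $\vol_d(R)\geq\vol_d(S)$, from which $\vol_d(S)\geq\vol_d(R_0)$ does not follow. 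With no homothety available in $\HH^d$, as you note, there is no evident way to trade the surplus inradius back for a volume decrease, so the main inequality itself fails, not just its equality clause.

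The paper's argument is genuinely different and sidesteps this obstacle. It places the centre of $B$ at the origin of the projective ball (Klein) model and applies \emph{Euclidean} Steiner symmetrization with respect to the hyperplane $H$ through $o$ perpendicular to the chosen edge. The Euclidean symmetral of a simplex in an edge direction is exactly a simplex, and since $o\in H$ one has $\sigma(B)=B$, so the class of competitors is preserved on the nose. Hyperbolic volume in this model is $\int_S\rho(\|x\|)\,dx$ for a rotationally symmetric, strictly monotone $\rho$; along each chord orthogonal to $H$ the density is symmetric about $H$ and monotone away from it, so centring the chord strictly decreases the integral unless it was already centred. Hence a minimizer is symmetric about every such hyperplane through $o$ and is therefore a regular simplex centred at $o$, with no iteration or shape-space compactness needed. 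If you want to salvage your outline rather than change models, what you need is a volume-nonincreasing, inradius-nondecreasing map from hyperbolic simplices to hyperbolic simplices; the paper's Euclidean symmetrization in the Klein model is exactly such a map, whereas intrinsic hyperbolic symmetrization is not.
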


\begin{thm}\label{thm:circumscribed_dp2}
Let $\M \in\{ \Eu^d, \HH^d \}$, let $B \subset \M$ be a ball, and let $P$ be a convex polytope containing $B$ with at most $d+2$ vertices. Then 
$$\vol_d (P) \geq  \vol_d (Q),$$
for a convex polytope $Q = \conv (S_1 \cup S_2)$ containing $B$, where $S_1, S_2$ are regular simplices with $\dim S_1=k$ and $\dim S_2 = d-k$ for some $1 \leq k \leq \frac{d}{2}$,
and contained in mutually orthogonal subspaces of $\M$ intersecting at the center of $B$. Furthermore, for any $d > 0$ there is a value $\varepsilon > 0$ such that
if the radius of $B$ is at most $\varepsilon$, or $\M = \Eu^d$, then $k = \left\lfloor \frac{d}{2} \right\rfloor$, or $d=4$ and $k=1$.
\end{thm}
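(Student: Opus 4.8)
The plan is to mirror the structure of the proof of Theorem~\ref{thm:inscribedvol}, but with the inequalities reversed: here we want a \emph{lower} bound on volume, so we will use the facts that Steiner symmetrization does not decrease inradius (Lemma~\ref{lem:eu_main}, inequality~(\ref{eq:euStein_ir})), and increases, or at least does not decrease, the inradius when passing to the projected convex hull (Lemma~\ref{lem:hyp_main}, inequality~(\ref{eq:hypStir})), combined with volume-preservation (Euclidean and hyperbolic cases). First I would argue by compactness that among convex polytopes $P \supseteq B$ with at most $d+2$ vertices the minimum of $\vol_d$ is attained; a minimizer has exactly $d+2$ vertices (fewer vertices would, after a perturbation shrinking a facet toward $B$, give a smaller circumscribed polytope), so by Remark~\ref{rem:combin} it is $\conv(S_1 \cup S_2)$ with $\dim S_1 = k$, $\dim S_2 = d-k$, $1 \le k \le \lfloor d/2\rfloor$, the spanning subspaces meeting in a single point $p$ interior to both simplices. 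The key normalization is that we may rescale so that $B$ is the largest inscribed ball of $P$, i.e.\ $\ir(P)$ equals the radius of $B$.

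Next, fix an edge $[q_1,q_2]$ of $S_1$ and let $H$ be its bisector, $L$ the line through it. By Remark~\ref{rem:intersection} there is a hyperplane $H'$ containing $V(P)\setminus\{q_1,q_2\}$ and meeting $[q_1,q_2]$, so Lemmas~\ref{lem:eu_main} and \ref{lem:hyp_main} apply. Form $P' = \conv([q_1,q_2]\cup\pi(V(P)\setminus\{q_1,q_2\}))$. In the hyperbolic case, $\vol_d(P') \ge \vol_d(\sigma(P)) = \vol_d(P)$ and $\ir(P') \ge \ir(P) = \ir(B)$, so after shrinking $P'$ slightly toward its incenter we obtain a polytope still containing a translate/congruent copy of $B$ with strictly smaller volume, unless $P$ is already symmetric about $H$. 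In the Euclidean case, $\vol_d(P') = \vol_d(P)$ and $\ir(P') \ge \ir(P)$ with equality iff $P$ is symmetric about $H$; again, strict inequality in the inradius lets us rescale down and contradict minimality. Hence a minimizer is symmetric about the bisector of every edge of $S_1$, and likewise of $S_2$, forcing $S_1$, $S_2$ to be regular simplices in orthogonal subspaces with common center, and this center to be the center of $B$ (since $B$ is the inball and the configuration has the full symmetry group permuting the vertices of each $S_i$, its incenter is the common center). This proves the displayed inequality with $Q$ of the claimed form.

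Finally, for the optimal value of $k$ one must actually compute $\vol_d(\conv(S_1\cup S_2))$ where $S_1$, $S_2$ are regular simplices of dimensions $k$ and $d-k$ in orthogonal subspaces, each positioned so that the whole polytope has inradius $1$ (equivalently, circumscribed about the unit ball). In the Euclidean case this is an explicit one-variable optimization over $k$: one writes the distance from the center to a facet of $\conv(S_1\cup S_2)$ in terms of the circumradii $\rho_1,\rho_2$ of $S_1,S_2$ and the constraint that this distance equals $1$, then expresses the volume and minimizes over $k\in\{1,\dots,\lfloor d/2\rfloor\}$. I expect the minimum to be at $k=\lfloor d/2\rfloor$ for all $d$ except a small exceptional case — the statement flags $d=4$, $k=1$ — which is exactly the kind of low-dimensional coincidence one only detects by carrying out the computation; this check is the main obstacle, and it is the reason the theorem cannot simply assert $k=\lfloor d/2\rfloor$ as in Theorem~\ref{thm:inscribedvol}. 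The spherical/hyperbolic small-radius claim then follows, as in Theorem~\ref{thm:inscribedvol}, by the standard limiting argument that balls of radius $\le\varepsilon$ make the space effectively Euclidean, so the Euclidean optimizer persists; the main subtlety is that near $d=4$ one must keep track of whether the Euclidean tie or near-tie between $k=1$ and $k=2$ is broken in a fixed direction for all small curvature, which a continuity/compactness argument in $d$ fixed handles.
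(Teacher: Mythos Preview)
Your Euclidean argument and the plan for the explicit optimization over $k$ (including the $d=4$ exception and the small-radius limit for $\HH^d$) match the paper's approach. The genuine gap is in your treatment of the hyperbolic case.

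When you invoke Lemma~\ref{lem:hyp_main}, the polytope $P'=\conv\bigl([q_1,q_2]\cup\pi(V(P)\setminus\{q_1,q_2\})\bigr)$ satisfies $\vol_d(P')\ge\vol_d(P)$ \emph{and} $\ir(P')\ge\ir(P)$, with both inequalities strict when $P$ is not symmetric about $H$. Both quantities move in the wrong direction for a minimization problem: $P'$ is a legitimate competitor (it contains a congruent copy of $B$) but has \emph{larger} volume than $P$, so there is no contradiction with $P$ being a minimizer. Your proposed fix of ``shrinking $P'$ toward its incenter'' cannot close this: any contraction bringing the inradius back to $r$ reduces the volume from $\vol_d(P')$, not from $\vol_d(P)$, and you have no control on how far $\vol_d(P')$ sits above $\vol_d(P)$; moreover, $\HH^d$ has no homotheties. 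The paper sidesteps the issue by a different device: it works in the projective ball model with $B$ centered at the origin $o$ and applies \emph{Euclidean} Steiner symmetrization with respect to a hyperplane through $o$ perpendicular to the chosen edge. This preserves containment of $B$ (a Euclidean ball centered on $H$) and preserves Euclidean volume, while the hyperbolic volume density $(1-\|x\|^2)^{-(d+1)/2}$ is strictly increasing in $\|x\|$, so by Fubini the symmetral has strictly \emph{smaller} hyperbolic volume unless $P$ was already symmetric. That yields the contradiction directly, with no rescaling needed.

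A secondary issue: your assertion that a minimizer must have exactly $d+2$ vertices is not supported by the perturbation you sketch (pushing a facet toward $B$ alters the facet count, not the vertex count). In the paper the possibility $|V(P)|=d+1$ is not ruled out a priori; it is handled by an explicit volume comparison between the regular circumscribed simplex and the optimal $(d+2)$-vertex configuration.
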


We note that while the natural choice to solve isoperimetric problems in $\HH^d$ or $\Sph^d$ is to use hyperbolic or spherical Steiner symmetrization, sometimes Euclidean Steiner symmetrization can also be used in these spaces. We illustrate it in the proof of Theorems~\ref{thm:circumscribed_dp1} and \ref{thm:circumscribed_dp2} in hyperbolic space. The proof presented in this paper is a variant of the proof of Lemma 9 in \cite{BL22}.

\begin{proof}[Proof of Theorems~\ref{thm:circumscribed_dp1} and \ref{thm:circumscribed_dp2}]
We start with the proof of Theorem~\ref{thm:circumscribed_dp1}. 

First, assume that $\M = \HH^d$. We represent $\HH^d$ in the projective ball model.
As we already remarked in Section~\ref{sec:prelim}, in this model the points of $\HH^d$ correspond to the points of the open unit ball $\inter (\BB^d)$ of $\Eu^d$ centered at the origin, and a set is convex in $\HH^d$ if and only if it is represented by a convex set in $\inter (\BB^d)$.
The volume element at point $x$ in this model is $\frac{1}{\left( 1+ ||x||^2\right)^{\frac{d+1}{2}}} dx$. Hence, if $A$ is a Borel set in $\inter (\BB^d)$, then
\[
\vol(A) = \int_A \frac{1}{\left( 1+ ||x||^2\right)^{\frac{d+1}{2}}} d x,
\]
where $d x$ denotes integration with respect to $d$-dimensional Lebesgue measure \cite{Ratcliffe}.

Let $S$ be a simplex in $\HH^d$ containing $B$. Without loss of generality, we may assume that the center of $B$ is $o$.
Observe that by compactness, $\vol_d(\cdot)$ attains its minimum on the family of hyperbolic simplices in $\HH^d$. Let $S$ be a simplex in $B$ minimizing
this volume, and let $[p_1,p_2]$ be an edge of $S$. Let $H$ denote the hyperplane perpendicular to the line through $[p_1,p_2]$ that passes through $o$. Let $\sigma$ denote the (Euclidean) Steiner symmetrization with respect to $H$.
Then, clearly, $\sigma(S)$ is a hyperbolic simplex in $\HH^d$ containing $B$. Furthermore, since the density function $\frac{1}{\left( 1+ ||x||^2\right)^{\frac{d+1}{2}}}$ is a strictly increasing function of $||x||$, by Fubini's theorem we have that
\[
\vol_d(\sigma(S)) = \int_{\sigma(S)} \frac{1}{\left( 1+ ||x||^2\right)^{\frac{d+1}{2}}} d x \leq \int_{S} \frac{1}{\left( 1+ ||x||^2\right)^{\frac{d+1}{2}}} d x = \vol_d(S),
\]
with equality if and only if $\sigma(S) = S$, that is, if $S$ is symmetric to $H$. Since $S$ has a minimal hyperbolic volume on the family of simplices containing $B$, it follows that the bisector of any edge of $S$ contains $o$, and thus, $S$ is a regular simplex with $o$ as its center. We obtain also that $S$ is circumscribed about $B$, as otherwise $\vol_d(S)$ is clearly not minimal. This proves Theorem~\ref{thm:circumscribed_dp1} for $\M= \HH^d$. For $\M = \Eu^d$, and to obtain the inequality in Theorem~\ref{thm:circumscribed_dp2}, we can apply an analogous argument.

Now we investigate the equality part of Theorem~\ref{thm:circumscribed_dp2}. By a standard limit argument, it is sufficient to prove the statement for $\M = \Eu^d$, where we may assume that $B$ is the unit ball centered at the origin.

The statement clearly holds if $d=2$, and hence, we assume that $d \geq 3$. Let $P$ be a convex polytope in $\Eu^d$ with $B \subset P$ and minimal volume.
We show that then
\begin{equation}\label{eq:Euvolir}
\vol_d(P) \leq \frac{d^{d/2}}{d!}  \left( \left\lfloor \frac{d}{2} \right\rfloor\right)^{ (\lfloor d/2 \rfloor + 1)/2} \left( \left\lceil \frac{d}{2} \right\rceil\right)^{ (\lceil d/2 \rceil + 1)/2} \left( \left\lfloor \frac{d}{2} \right\rfloor + 1 \right) \left( \left\lceil \frac{d}{2} \right\rceil + 1\right).
\end{equation}
First, an elementary computation shows that the volume of a regular simplex in $\Eu^d$ with unit inradius is $\frac{d^{d/2}(d+1)^{(d+1)/2}}{d!}$, which is strictly smaller than the quantity in (\ref{eq:Euvolir}). Thus, in the following we consider the case that $P$ has exactly $d+2$ vertices.

Similarly like in the proof of Theorem~\ref{thm:circumscribed_dp1}, we have that if $P$ has minimal volume, then $P = \conv (S_1 \cup S_2)$ for two regular simplices $S_1$ and $S_2$, where the affine hulls of $S_1$ and $S_2$ are perpendicular and meet at the common centerpoint of $S_1$ and $S_2$. Let $1 \leq \dim S_1 = k \leq \dim S_2 = d-k \leq d-1$. Without loss of generality, we may assume that $o$ is the center of $S_1$ and $S_2$.

We determine the edge lengths and the dimensions of $S_1$ and $S_2$.
For $i=1,2$, let the inradius of $S_i$ be denoted by $r_i$. Let the vertices of $S_1$ and $p_1, \ldots, p_{k+1}$, and the vertices of $S_2$ be $q_1, \ldots, q_{d-k+1}$.
Note that for any value of $i$, $||p_i||= k r_1$, and $||q_i|| = (d-k) r_2$.
Furthermore, since a facet of $P$ is the convex hull of a facet of $S_1$ and a facet of $S_2$, the volume of $P$ can be obtained as $(k+1)(d-k+1)$ times the volume of $\conv \{ o, p_1, \ldots, p_k, q_1, \ldots, q_{d-k} \}$. In addition, the latter volume is $d!$ times the volume of the parallelotope $P'$ spanned by the vectors $p_1, \ldots, p_k, q_1, \ldots, q_{d-k}$. Let $G$ denote the Gram matrix of these vectors, and note that for any values of $i,j$, $\langle p_i, p_i \rangle = k^2 r_1^2$, $\langle q_i, q_i \rangle = (d-k)^2 r_2^2$ and $\langle p_i, q_j \rangle = 0$, and for any $i \neq j$, $\langle p_i, p_j \rangle = - k r_1^2$ and $\langle q_i, q_j \rangle = - (d-k) r_2^2$. It is well known that $\vol_d (P') = \sqrt{ \det (G)}$. Thus, an elementary computation shows that $\vol_d (P') = r_1^k r_2^{d-k} k^{k+1/2} (d-k)^{d-k+1/2}$, which yields that
\begin{equation}\label{eq:euvol}
\vol_d(P) = \frac{1}{d!} r_1^k r_2^{d-k} k^{k+1/2} (d-k)^{d-k+1/2} (k+1)(d-k+1).
\end{equation}

Note that since the unit ball centered at $o$ touches at least one facet of $P$, it touches every facet of $P$. Let $c_1$ and $c_2$ be the centers of a facet of $S_1$ and a facet of $S_2$, respectively. Then the unit ball centered at $o$  touches the segment $[c_1,c_2]$. Since the triangle $\conv \{ o,c_1,c_2 \}$ is a right triangle with legs of lengths $r_1$ and $r_2$, respectively, we obtain that $(r_1^2 -1)(r_2^2-1) = 1$. Thus, there is a real value $t \in \Re$ such that $r_1 = \sqrt{1+e^t}$ and $r_2 = \sqrt{1+e^{-t}}$. Substituting these expressions into (\ref{eq:euvol}), we obtain that $\vol_d(P) = f_k(t)$, where
\[
f_k(t) = \frac{1}{d!} (1+e^t)^{k/2} (1+e^{-t})^{(d-k)/2} k^{k+1/2} (d-k)^{d-k+1/2} (k+1)(d-k+1).
\]
We find the minimum of $f_k(t)$ for all real $t \in \Re$ and integer $1 \leq k \leq d-1$.
Observe that for any value of $k$, $f_k(t)$ attains its minimum for some value of $t$. On the other hand, an elementary computation shows that the only real solution of the equation $f'_k(t)=0$ is $t=\frac{d-k}{k}$, which shows that for any fixed value of $k$, $f_k(t)$ is minimal if $t=\frac{d-k}{k}$.
Let $g_d(k) = f_k\left( \frac{d-k}{k} \right)$. Then
\[
g_d(k) = \frac{d^{d/2}}{d!} k^{(k+1)/2} (d-k)^{(d-k+1)/2} (k+1) (d-k+1).
\]

We remark that for $d=3$ we have proved the statement, and hence, we assume that $d \geq 4$. Let us extend the domain of $g_d$ to the interval $k \in [1,d-1]$ using the same formula. Then by differentiating twice and applying algebraic transformations, we obtain that the function $\ln g_d$ is strictly convex on $[2,d-2]$, implying that for any integer $2 \leq k < \lfloor \frac{d}{2} \rfloor$, we have $g_d(k) > g_d \left( \lfloor \frac{d}{2} \rfloor \right)$. On the other hand, an elementary computation shows that $g_d(1) \geq g_d(2)$, with equality if and only if $d=4$. This yields the assertion.
\end{proof}

\begin{cor}\label{cor:volinradius}
Let $P$ be a convex polytope in $\Eu^d$ with at most $d+2$ vertices and unit inradius. Then
\[
\vol_d(P) \leq \frac{d^{d/2}}{d!}  \left( \left\lfloor \frac{d}{2} \right\rfloor\right)^{ (\lfloor d/2 \rfloor + 1)/2} \left( \left\lceil \frac{d}{2} \right\rceil\right)^{ (\lceil d/2 \rceil + 1)/2} \left( \left\lfloor \frac{d}{2} \right\rfloor + 1 \right) \left( \left\lceil \frac{d}{2} \right\rceil + 1\right),
\]
with equality if and only if there are regular simplices $S_1, S_2$ lying in orthogonal affine subspaces that meet at the common center of both $S_1$ and $S_2$, such that one of the following holds:
\begin{itemize}
\item $\dim S_1 = \left\lfloor \frac{d}{2} \right\rfloor$ and $\dim S_2 = \left\lceil \frac{d}{2} \right\rceil$;
\item $d=4$, $\dim S_1 = 1$ and $\dim S_2 = 3$.
\end{itemize}
\end{cor}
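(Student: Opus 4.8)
The plan is to derive Corollary~\ref{cor:volinradius} directly from Theorem~\ref{thm:circumscribed_dp2} together with its proof. First I would observe that the volume bound is scale-invariant in the sense needed: if $P$ has unit inradius, then $B$, the inball of $P$, is a ball of radius $1$ contained in $P$, so $P$ is one of the polytopes considered in Theorem~\ref{thm:circumscribed_dp2} (with at most $d+2$ vertices and containing a unit ball). Hence $\vol_d(P) \geq \vol_d(Q)$ is \emph{not} what I want — rather, I need the reverse: I want to bound $\vol_d(P)$ from above. The key point is that in the proof of Theorem~\ref{thm:circumscribed_dp2} the quantity $g_d(k)$ (equivalently the right-hand side of \eqref{eq:Euvolir}) was shown to be an \emph{upper} bound on $\vol_d(P)$ for the minimal-volume polytope, and more importantly, the inequality \eqref{eq:Euvolir} was established for every convex polytope $P \supset B$ with at most $d+2$ vertices, with $B$ the unit ball. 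Actually, re-reading, the cleanest route is: \eqref{eq:Euvolir} as proved bounds the minimal volume; combined with the fact that $\vol_d$ of a regular simplex of unit inradius is strictly below that bound, and that for a $(d+2)$-vertex polytope the analysis of $f_k(t)$ and $g_d(k)$ shows the minimum over all admissible configurations equals the right-hand side, one concludes that \emph{every} such $P$ with unit inradius satisfies $\vol_d(P) \leq$ (RHS), because the family of candidate extremal polytopes $\conv(S_1 \cup S_2)$ with $B$ inscribed is exactly the family over which $g_d(k)$ and $f_k(t)$ were optimized, and the generic $P$ is dominated (after Steiner symmetrization, which does not decrease volume while keeping $B$ inside) by such a configuration. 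So the proof of the inequality in the Corollary is essentially a restatement of the chain of inequalities already carried out for Theorem~\ref{thm:circumscribed_dp2}, now read as an upper bound on $\vol_d(P)$ rather than a lower bound on $\vol_d(Q)$.

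Second, I would handle the equality characterization. From the proof of Theorem~\ref{thm:circumscribed_dp2}, equality in \eqref{eq:Euvolir} forces $P$ to have exactly $d+2$ vertices (the regular simplex case gives strictly smaller volume), forces $P = \conv(S_1 \cup S_2)$ with $S_1, S_2$ regular simplices in orthogonal affine subspaces meeting at their common center, forces the parameter $t$ to equal the critical value $t = \frac{d-k}{k}$ so that $r_1 = \sqrt{1+e^{(d-k)/k}}$ and $r_2 = \sqrt{1+e^{-(d-k)/k}}$ are pinned down, and finally forces $k$ to be a minimizer of $g_d(k)$ over integers $1 \leq k \leq d-1$. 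The strict convexity of $\ln g_d$ on $[2, d-2]$ together with the computation $g_d(1) \geq g_d(2)$ (equality iff $d = 4$) shows the integer minimizers are exactly $k = \lfloor d/2 \rfloor$ (equivalently the pair $\{\lfloor d/2\rfloor, \lceil d/2\rceil\}$ up to swapping $S_1 \leftrightarrow S_2$), plus the extra solution $k = 1$ when $d = 4$. Translating back, this is precisely the two bulleted cases in the Corollary. I should note that after fixing $k$, the edge lengths of $S_1$ and $S_2$ are determined by the inradius-$1$ condition $(r_1^2-1)(r_2^2-1)=1$ at the critical $t$, so no further freedom remains beyond the isometry group of $\Eu^d$.

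A small technical point worth addressing explicitly: Theorem~\ref{thm:circumscribed_dp2} is phrased for a polytope \emph{containing} $B$, whereas the Corollary fixes the inradius. These are compatible because the inradius-$1$ condition is exactly ``$P$ contains a unit ball $B$ and no larger ball,'' and for the extremal configurations the unit ball centered at $o$ is in fact the inball (it touches every facet, as shown in the proof). For a general $P$ with unit inradius, one simply takes $B$ to be an inball; then $P \supset B$ and the inequality \eqref{eq:Euvolir} applies verbatim. So there is nothing to prove beyond citing the already-established inequality and unwinding its equality conditions.

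I do not anticipate a genuine obstacle here: the Corollary is a corollary in the strict sense, and the only care needed is to make sure the direction of the inequality is read correctly (upper bound on $\vol_d(P)$ via the optimization of $g_d$) and that the equality cases are transcribed faithfully from the discussion of $g_d(k)$, including the exceptional $d=4$ branch. The mild subtlety is purely expository — distinguishing ``$P$ contains $B$'' (Theorem) from ``$P$ has unit inradius'' (Corollary) — and is resolved by the observation that the extremal polytopes have $B$ as their inball.
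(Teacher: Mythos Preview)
Your proposal has a fundamental problem: you are trying to establish the inequality in the direction printed, namely an \emph{upper} bound $\vol_d(P) \le (\text{RHS})$ for polytopes $P$ with unit inradius and at most $d+2$ vertices. No such bound can hold: a polytope with unit inradius can have arbitrarily large volume (pull one vertex of a simplex far away while keeping the inball fixed). The inequality sign in the Corollary is a misprint; the statement that actually follows from Theorem~\ref{thm:circumscribed_dp2} and its proof is the lower bound $\vol_d(P) \ge (\text{RHS})$. Correspondingly, the line in the proof of Theorem~\ref{thm:circumscribed_dp2} saying the regular-simplex volume is ``strictly smaller'' than the right-hand side of \eqref{eq:Euvolir} should read ``strictly greater'' (check $d=2$: the regular triangle of unit inradius has area $3\sqrt{3}>4$), which is precisely what allows one to discard the $(d+1)$-vertex case when locating the minimizer.

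Once the direction is corrected, your plan coincides with the paper's intended derivation: the Corollary is a restatement of the Euclidean case of Theorem~\ref{thm:circumscribed_dp2} together with the explicit computation of $g_d(k)$. The inball of $P$ is a unit ball $B\subset P$, so Theorem~\ref{thm:circumscribed_dp2} gives $\vol_d(P)\ge\vol_d(Q)$ for some $Q=\conv(S_1\cup S_2)$ of the described shape, and the minimization of $f_k(t)$ and then $g_d(k)$ yields $\vol_d(Q)\ge g_d(\lfloor d/2\rfloor)$, which is the displayed quantity. Your treatment of the equality cases (the branch $k=\lfloor d/2\rfloor$ and the exceptional $d=4$, $k=1$ branch coming from $g_4(1)=g_4(2)$) is correct. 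What you should drop are the paragraphs attempting to manufacture an upper bound---in particular the claim that Euclidean Steiner symmetrization ``does not decrease volume'' and thereby dominates a generic $P$ by an extremal configuration; Euclidean Steiner symmetrization preserves volume exactly, so that line of argument neither proves an upper bound nor is needed for the (correct) lower bound.
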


We are unable to extend Theorem~\ref{thm:circumscribed_dp1} for $\M = \Sph^d$. Nevertheless, we note that the spherical version of Dowker's theorem implies that if $\M = \Sph^2$, then any spherical $m$-gon of minimal area containing the spherical cap $B$ is a regular $m$-gon circumscribed about $P$. We present a proof for $\M = \Sph^3$ using L\'aszl\'o Fejes T\'oth's famous `moment lemma', which we quote as follows (see \cite{FejesToth}).

\begin{lem}[Moment lemma]\label{lem:moment}
Let $\rho(\tau)$ be a strictly increasing function for $0 < \tau < \frac{\pi}{2}$. Furthermore, let $p_1, p_2, \ldots , p_k$ be $k$ points of $\Sph^2$ not all of them lying on a hemisphere, and let $\tau(p)$ denote the minimal spherical distance of the point $p \in \Sph^2$ from a point of the set $\{ p_1, p_2, \ldots, p_k \}$. Let $\bar{\Delta}$ be an equilateral spherical triangle with vertices $\bar{p}_1, \bar{p}_2, \bar{p}_3$ and having area $\frac{2\pi}{k-2}$, and let $\bar{\tau}(p)$ denote the minimum spherical distance of $p$ from one of the vertices of $\bar{\Delta}$. Then we have
\begin{equation}\label{eq:moment}
\int_{\Sph^2} \rho(\tau(p)) \, d \omega_s \leq (2k-4) \int_{\bar{\Delta}} \rho(\bar{\tau}(p)) \, d \omega_s,
\end{equation}
where $d \omega_s$ denotes integration with respect to spherical volume.
Equality in \ref{eq:moment} holds if and only if $\{p_i : i=1,2,\ldots, k\}$ is the vertex set of a regular tetrahedron, octahedron or icosahedron.
\end{lem}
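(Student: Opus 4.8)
This is a classical result of L.~Fejes T\'oth; the plan is to reprove it along the lines of his original argument, which has three ingredients: a Dirichlet--Voronoi decomposition, a reduction to spherical right triangles, and a convexity estimate finished off by Jensen's inequality. Write $d\omega_s$ for spherical area. First I would pass from the integral to a sum over the Dirichlet--Voronoi tiling: let $D_i=\{x\in\Sph^2:\ d(x,p_i)\le d(x,p_j)\text{ for all }j\}$. Since the $p_i$ do not all lie on a closed hemisphere, each $D_i$ is a convex spherical polygon contained in an open hemisphere, with $p_i$ in its interior and $\tau(x)=d(x,p_i)$ for $x\in D_i$; in particular $\tau<\tfrac\pi2$ everywhere, so $\rho(\tau)$ is defined, and
\[
\int_{\Sph^2}\rho(\tau(p))\,d\omega_s=\sum_{i=1}^{k}\int_{D_i}\rho\big(d(x,p_i)\big)\,d\omega_s .
\]

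Next, inside each $D_i$ I would drop the perpendicular from $p_i$ to every side, cutting $D_i$ into right spherical triangles, each with a vertex at $p_i$, the right angle at a perpendicular foot, and the third vertex at a vertex of $D_i$ (when a foot falls outside its side one gets, as usual, a signed difference of two such triangles, to be handled as in \cite{FejesToth}). A right spherical triangle is determined up to isometry by its angle $\varphi$ at $p_i$ and its area $a$ (the third angle being $a+\tfrac\pi2-\varphi$), so the contribution of each piece is a well defined function $m(\varphi,a)$, its \emph{moment about the apex}. By Euler's formula the Voronoi diagram has $2k-4$ vertices and $3k-6$ edges, so this produces exactly $N=12k-24=6(2k-4)$ right triangles, and since the angles at each $p_i$ fill a full angle while the cells tile the sphere, $\sum_j\varphi_j=2\pi k$ and $\sum_j a_j=4\pi$. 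Cutting the model triangle $\bar\Delta$ along its three perpendicular bisectors gives $6$ congruent right triangles of angle $\bar\varphi=\tfrac{\pi k}{6(k-2)}=\tfrac1N\sum_j\varphi_j$ and area $\bar a=\tfrac{\pi}{3(k-2)}=\tfrac1N\sum_j a_j$ at the apex, so that $(2k-4)\int_{\bar\Delta}\rho(\bar\tau)\,d\omega_s=N\,m(\bar\varphi,\bar a)$.

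The heart of the proof is the claim that $m(\varphi,a)$ is a concave function of the pair $(\varphi,a)$ on the range of parameters that occurs; granting it, Jensen's inequality gives
\[
\int_{\Sph^2}\rho(\tau(p))\,d\omega_s=\sum_{j=1}^{N}m(\varphi_j,a_j)\ \le\ N\,m(\bar\varphi,\bar a)=(2k-4)\int_{\bar\Delta}\rho(\bar\tau)\,d\omega_s,
\]
which is the assertion. To establish the concavity I would write each piece's moment in geodesic polar coordinates centred at its apex as $m(\varphi,a)=\int_0^{\varphi}G\big(r(\theta;\varphi,a)\big)\,d\theta$, where $G(s)=\int_0^s\rho(t)\sin t\,dt$ and $r(\theta;\varphi,a)$ is the distance from the apex to the opposite side in direction $\theta$, computed by elementary spherical trigonometry; here $G$ is positive, increasing and convex on $(0,\tfrac\pi2)$ because $\rho(t)\sin t$ is, and this has to be combined with the joint concavity of $r$ in its two parameters. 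I expect this estimate --- joint concavity of the single right triangle moment --- to be the main obstacle; everything else is bookkeeping with Euler's formula and Jensen's inequality.

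Finally, for the equality case I would argue that strict concavity of $m$ forces $(\varphi_j,a_j)=(\bar\varphi,\bar a)$ for every piece, i.e.\ every right triangle in the decomposition is isometric to the one cut from $\bar\Delta$; then each cell $D_i$ is a regular spherical $n_i$-gon with $2n_i\bar\varphi=2\pi$, so $n_i=\tfrac{6(k-2)}{k}$ is the same positive integer for every $i$, which occurs only for $k=4,6,12$ with $n_i=3,4,5$ respectively. The corresponding tilings of $\Sph^2$ by congruent regular polygons are the regular ones, whose sets of cell centres are exactly the vertex sets of the regular tetrahedron, octahedron and icosahedron, and each of these does attain equality. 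As the lemma is classical, the remaining details --- the treatment of the signed pieces and the concavity computation --- can be found in \cite{FejesToth}.
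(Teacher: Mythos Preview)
The paper does not prove this lemma at all: Lemma~\ref{lem:moment} is simply quoted from \cite{FejesToth} as L.~Fejes T\'oth's ``famous moment lemma'' and then invoked as a black box in the proof of Theorem~\ref{thm:circumscribed_sph}. There is therefore no proof in the paper to compare your proposal against.

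Your sketch is, in fact, a faithful reconstruction of Fejes T\'oth's own argument and is structurally sound. The Dirichlet--Voronoi decomposition, the Euler-formula count ($2k-4$ vertices, $3k-6$ edges, hence $N=12k-24$ right triangles after dropping perpendiculars), the averaging identities $\bar\varphi=\pi k/(6(k-2))$ and $\bar a=\pi/(3(k-2))$, and the identification of these averages with the parameters of the six congruent right triangles obtained from $\bar\Delta$ are all correct. The equality analysis is also right: strict concavity forces all pieces to be congruent, and $n_i=6(k-2)/k\in\mathbb{Z}_{\geq 3}$ singles out $k\in\{4,6,12\}$. You correctly identify the joint concavity of the single-triangle moment $m(\varphi,a)$ as the only substantive step; since the paper itself is content to cite \cite{FejesToth} for the whole lemma, deferring this computation to the same reference is entirely in keeping with the paper's treatment.
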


\begin{thm}\label{thm:circumscribed_sph}
Let $B \subset \Sph^3$ be a spherical ball. Let $T \subset \Sph^3$ be a tetrahedron containing $B$, and let $T_{\mathrm{reg}}$ be a regular tetrahedron circumscribed about $B$. Then
\[
\vol_3(T) \geq \vol_3(T_{\mathrm{reg}}),
\]
with equality if and only if $T$ is congruent to $T_{\mathrm{reg}}$.
\end{thm}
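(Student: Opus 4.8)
The plan is to reduce the spherical problem to the moment lemma (Lemma~\ref{lem:moment}) by interpreting the volume of a circumscribed tetrahedron in $\Sph^3$ as an integral of a radial density over a spherical ``base'' $\Sph^2$. First I would set up coordinates: place the center of $B$ at a point $o \in \Sph^3$ and let $r = \ir(B)$, so a tetrahedron $T \supseteq B$ has all four facets at distance $\geq r$ from $o$, with equality forced on every facet (since if one facet is tangent, minimality makes all of them tangent, exactly as in the Euclidean argument of Theorem~\ref{thm:circumscribed_dp2}). The four facet hyperplanes are spherical $2$-spheres; each is determined by its pole, a point of the unit sphere $\Sph^2$ of directions at $o$ (identifying $\Sph^2$ with the set of unit tangent vectors at $o$, or equivalently with a great $\Sph^2 \subset \Sph^3$ through $o$). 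Let $p_1,p_2,p_3,p_4 \in \Sph^2$ be these four poles; the condition that the tetrahedron actually contains $B$ and is bounded forces the $p_i$ not to lie in a closed hemisphere. Conversely any such configuration of four directions, together with the common tangency distance $r$, determines $T$.

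The key step is to express $\vol_3(T)$ in terms of the $p_i$. I would integrate in geodesic polar coordinates around $o$: writing a point of $\Sph^3$ as $(\theta, u)$ with $\theta \in [0,\pi)$ the distance from $o$ and $u \in \Sph^2$ a direction, the spherical volume element is $\sin^2\theta \, d\theta \, d\omega_s(u)$. For a fixed direction $u$, the ray from $o$ in direction $u$ exits $T$ through the facet whose pole is ``closest'' to $u$ in the appropriate sense; the exit distance $R(u)$ is a decreasing function of the angular distance $\tau(u) = \min_i \ds(u,p_i)$ — explicitly, for a single tangent hyperplane with pole $p$ at distance $r$ from $o$, the chord in direction $u$ has length determined by the right spherical triangle with legs $r$ and $\tau$, giving $R$ as an explicit decreasing function $R(\tau)$. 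Hence
\[
\vol_3(T) = \int_{\Sph^2} \left( \int_0^{R(\tau(u))} \sin^2\theta \, d\theta \right) d\omega_s(u) = \int_{\Sph^2} \Phi(\tau(u)) \, d\omega_s(u),
\]
where $\Phi(\tau) = \int_0^{R(\tau)} \sin^2\theta\, d\theta$ is a strictly \emph{increasing} function of $\tau$ on the relevant range (since $R$ is decreasing and $\theta\mapsto\sin^2\theta$ is positive, $\Phi$ decreases — so I must double-check the sign and instead use that the \emph{complementary} region, or $\Phi$ composed with the decreasing reparametrization, produces the monotonicity direction the moment lemma needs; concretely one applies Lemma~\ref{lem:moment} with $\rho$ chosen so that $\int_{\Sph^2}\rho(\tau(p))\,d\omega_s$ equals a constant minus $\vol_3(T)$, turning the volume \emph{minimization} into a moment \emph{maximization}). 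With $k=4$, the lemma gives $2\pi/(k-2) = \pi$ as the model triangle area and equality precisely for the regular tetrahedron vertex set, which is exactly the configuration of facet-poles of $T_{\mathrm{reg}}$.

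Assembling: for any $T \supseteq B$ we may assume $T$ has minimal volume and hence all facets tangent to $B$ at distance $r$; the four poles $p_1,\dots,p_4 \in \Sph^2$ do not lie in a hemisphere; the moment lemma applied to the associated radial integral yields $\vol_3(T) \geq \vol_3(T_{\mathrm{reg}})$ with equality iff $\{p_i\}$ is the vertex set of a regular tetrahedron, i.e. iff $T$ is regular and circumscribed about $B$, hence congruent to $T_{\mathrm{reg}}$. The main obstacle I anticipate is the bookkeeping in the second step: verifying that the exit distance $R(u)$ is governed by a single ``nearest'' pole in a way compatible with the $\min$ in $\tau(u)$ (this uses convexity of $T$ and the fact that for a tangent hyperplane the region on the $o$-side is exactly $\{\ds(u\text{-ray}, \text{pole})\ge r\}$), and then pinning down the precise monotone reparametrization so that the quantity fed into Lemma~\ref{lem:moment} has the correct (strictly increasing) $\rho$ and the equality characterization transfers cleanly. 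The ``small ball'' limiting check and the observation that non-tangent minimizers cannot occur are routine and handled as in Theorem~\ref{thm:circumscribed_dp2}.
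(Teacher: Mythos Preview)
Your approach is essentially the paper's: both write the volume of a tetrahedron circumscribed about $B$ as $\int_{\Sph^2}\rho(\tau(p))\,d\omega_s$, where $p_1,\dots,p_4$ are the tangency points (equivalently, your facet-normal directions), $\tau$ is the nearest-point distance, and $\rho$ comes from the radial integral, then invoke the moment lemma with $k=4$; the paper's one-line ``central projections of the faces onto $\bd(B)$ are the Voronoi cells'' is precisely your ``exit face is governed by the nearest pole.'' The only slip is the monotonicity: from the right-spherical-triangle relation $\tan R=\tan r/\cos\tau$ the exit distance $R$ is \emph{increasing} in $\tau$, so $\Phi(\tau)=\int_0^{R(\tau)}\sin^2\theta\,d\theta$ is strictly increasing and feeds directly into Lemma~\ref{lem:moment} with no complementary-region detour needed.
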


\begin{proof}
Clearly, we may assume that $T$ is circumscribed about $B$. Let $p_i$, where $i=1,2,3,4$, denote the points where a face of $T$ touches $B$. Observe that $\bd(B)$ is a $2$-dimensional spherical space, and that the central projections of the faces of $T$ onto $\bd (B)$ coincide with the Voronoi cells of the point set $\{ p_1, p_2, p_3, p_4 \}$; i.e. any point of the cell containing $p_i$ is not farther from $p_i$ than from any other point of the set. Furthermore, if for any $p \in \bd (B)$, $\tau(p)$ denotes the minimum spherical distance from the points $p_i$, then there is a strictly increasing function $\rho(\tau)$ such that the volume of $T$ coincides with
$\int_{\bd(B)} \rho(\tau(p)) \, d \omega_s$. Thus, the assertion follows from Lemma~\ref{lem:moment}.
\end{proof}

Is is well known (see e.g. \cite{Klamkin}) that among simplices $S$ in $\Eu^d$, the ratio of the circumradius of $S$ to its inradius is minimal if and only if $S$ is regular. Our next result is a partial generalization of this for hyperbolic and spherical spaces.

\begin{thm}\label{thm:ratio}
Let $d \geq 2$. For any simplex $S$ in the hyperbolic space $\HH^d$,
\[
\tanh \cirr(S) \geq d \tanh \ir(S),
\]
with equality if, and only if $S$ is regular. Furthermore, for any simplex $S$ in the spherical space $\Sph^d$ with $d \leq 3$,
\[
\tan \cirr(S) \geq d \tan \ir(S),
\]
with equality if, and only if $S$ is regular.
\end{thm}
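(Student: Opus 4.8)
The plan is to reduce the hyperbolic and spherical statements, as far as possible, to the known Euclidean fact via the projective (Beltrami--Klein) model, combined with the Steiner-symmetrization machinery of Section~\ref{sec:prelim} to force the extremal simplex to be regular. First I would fix a simplex $S$ and normalize so that the center of its inscribed ball is the origin $o$ of the projective ball model. In this model, geodesics are Euclidean segments, so $S$ is represented by a Euclidean simplex $\bar S$; moreover a hyperbolic (resp. spherical) ball centered at $o$ is represented by a Euclidean ball centered at $o$, and the hyperbolic/spherical radius is a fixed increasing function of the Euclidean radius. Concretely, for $\HH^d$ one has $r_{\mathrm{eucl}} = \tanh r_{\mathrm{hyp}}$ for a ball centered at $o$ (this is exactly why $\tanh$ appears), and for $\Sph^d$ in the central projection onto a tangent hyperplane one has $r_{\mathrm{eucl}} = \tan r_{\mathrm{sph}}$. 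Hence $\tanh\ir(S) = \ir(\bar S)$ and I would like to also say $\tanh\cirr(S) = \cirr(\bar S)$; the inradius identity is immediate because the insphere is automatically centered at $o$, but the circumradius requires care, since the circumscribed ball of $\bar S$ need not be centered at $o$.

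The key step is therefore to handle the circumradius. Here I would argue that for the extremal simplex we may assume the circumscribed ball is also centered at $o$, and then $\cirr(\bar S)$ in Euclidean terms is exactly $\tanh\cirr(S)$ (resp. $\tan\cirr(S)$), so the inequality becomes precisely the Euclidean inequality $\cirr(\bar S)\geq d\,\ir(\bar S)$ quoted from \cite{Klamkin}, with equality iff $\bar S$ is regular, i.e.\ iff $S$ is regular. To justify concentricity of the in- and circumscribed balls at the extremum, I would run a Steiner-symmetrization argument in the spirit of the proof of Theorems~\ref{thm:circumscribed_dp1}--\ref{thm:circumscribed_dp2}: among simplices of given inradius, minimize $\tanh\cirr$ (equivalently minimize the Euclidean circumradius of the model simplex) — or symmetrically, among simplices of given circumradius maximize the inradius. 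Fix an edge $[p_1,p_2]$ of $S$, let $H$ be the hyperplane through $o$ perpendicular to the line of $[p_1,p_2]$, and apply the Steiner symmetrization $\sigma$ with respect to $H$ (for $\HH^d$, with axis that line, as in Lemma~\ref{lem:hyp_main}); Remark~\ref{rem:intersection} supplies a hyperplane through the other $d-1$ vertices meeting $[p_1,p_2]$, so Lemmas~\ref{lem:hyp_main} and \ref{lem:sph_main} apply. Symmetrization does not decrease the inradius (the symmetral of a ball of radius $r$ is a ball of radius $r$), and I must check it does not increase the circumradius — for which I would again pass to the model and use that Euclidean Steiner symmetrization with respect to a hyperplane through the center of a ball keeps the image inside that ball, so a circumball centered at $o$ survives. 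Iterating over all edges forces $S$ to be symmetric in every edge bisector, hence regular, and a regular simplex obviously has its in- and circumcenter at $o$.

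The main obstacle is precisely the circumradius bookkeeping: unlike the inradius, the circumradius of a convex body is not monotone under Steiner symmetrization in general, and the Euclidean-model circumradius of $\bar S$ equals $\tanh\cirr(S)$ only when the Euclidean circumball is centered at $o$. So the crux is to set the optimization up so that, at the extremum, one may assume concentricity — e.g.\ by the observation that a regular simplex (the conjectured extremum) is concentric, combined with a continuity/compactness argument showing the extremum is attained and is regular. Finally, the restriction to $d\leq 3$ in the spherical case enters exactly here and in Lemma~\ref{lem:sph_main}'s hypotheses: in $\Sph^d$ one needs the whole configuration (and its symmetrals) to stay in an open hemisphere disjoint from the poles of each bisecting hyperplane, which is automatic for $d=2,3$ but problematic in higher dimensions, and moreover the tangent-hyperplane central projection only faithfully represents a hemisphere; I would flag that this is the reason the spherical statement is only claimed for $d\le 3$.
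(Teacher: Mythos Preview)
Your approach is genuinely different from the paper's, and the gap you yourself flag as the ``main obstacle'' is not actually closed. You place the incenter at the model origin $o$, so that $\tanh\ir(S)=\ir(\bar S)$, and then want to Steiner-symmetrize with respect to a hyperplane through $o$ and argue that the circumradius does not increase because ``a circumball centered at $o$ survives.'' But the circumball is \emph{not} centered at $o$ --- only the inball is --- so this step is circular: you are assuming the concentricity you are trying to deduce. Switching the setup so that the circumcenter is at $o$ does not rescue the argument either, because then the Euclidean radius of the inball (now off-center in the model) no longer equals $\tanh\ir(S)$, and Euclidean symmetrization of that ball does not track the hyperbolic inradius. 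The projective-model trick controls one radius at a time, not both simultaneously, and you have not supplied the missing monotonicity. (There is also a mismatch between the hyperplane ``through $o$ perpendicular to the edge'' that you describe and the edge \emph{bisector} required by Lemma~\ref{lem:hyp_main}; these coincide only when the relevant center is already equidistant from $p_1,p_2$.)

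The paper sidesteps this entirely by decoupling the two radii through volume. It combines two separate extremal facts: among simplices of a \emph{fixed volume} in $\HH^d$ (resp.\ $\Sph^d$), the regular one has minimal circumradius (Peyerimhoff \cite{P02}, B\"or\"oczky \cite{Boroczky}), and, by Theorem~\ref{thm:circumscribed_dp1} (resp.\ Theorem~\ref{thm:circumscribed_sph} together with the Dowker-type remark preceding it), the regular one also has maximal inradius. Comparing an arbitrary $S$ to the regular simplex of the same volume gives $\cirr(S)\geq\cirr(S_{\mathrm{reg}})$ and $\ir(S)\leq\ir(S_{\mathrm{reg}})$, and since the regular simplex satisfies the equality $\tanh\cirr=d\tanh\ir$ (resp.\ $\tan\cirr=d\tan\ir$), the inequality follows. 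No single symmetrization needs to control both radii at once.

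This also corrects your explanation of the spherical restriction. The bound $d\leq 3$ is not about hemisphere or pole technicalities in Lemma~\ref{lem:sph_main} (a spherical simplex always lies in an open hemisphere, in any dimension); it is there because the inradius half of the argument --- that among simplices containing a given ball the regular circumscribed one has minimal volume --- is only available for $\Sph^2$ (via Dowker) and $\Sph^3$ (via the moment lemma, Theorem~\ref{thm:circumscribed_sph}). The case $d\geq 4$ is precisely the open Problem~\ref{prob:smw}.
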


\begin{proof}
Among simplices $S$ of a given volume in $\HH^d$ or $\Sph^d$, the regular ones have minimal circumradius (see \cite{P02} and \cite{Boroczky}, respectively). Furthermore, among simplices of a given volume in $\HH^d$, the regular ones have maximal inradius (see Theorem~\ref{thm:circumscribed_dp1}), and the same holds for $d \leq 3$ for simplices in $\Sph^d$ (see Theorem~\ref{thm:circumscribed_sph}, and the remark preceding it). Theorem~\ref{thm:ratio} readily follows from these observations.
\end{proof}

We remark that the $d=2$ case of Theorem~\ref{thm:ratio} can be found e.g. in \cite{Veljan}. 
Before stating our next result, we recall that by \cite{P02}, among simplices in $\HH^d$ containing a given ball $B$ the regular simplices circumscribed about $B$ have minimal total edge length. In our next statement we show that there is no similar statement for spherical simplices, and prove a variant of this statement for hyperbolic polytopes with $d+2$ vertices.

\begin{prop}\label{thm:TEL_spherical}
Let $d \geq 2$, let $B$ be a ball of radius $r$ in $\Sph^d$, and let $S_{\mathrm{reg}}$ be a regular simplex circumscribed about $B$. We have the following.
\begin{itemize}
\item[(i)] For any $d \geq 3$, if $r$ is sufficiently close to $\frac{\pi}{2}$, then there is a simplex $S$ containing $B$ and satisfying $\vol_1(\skel_1(S)) < \vol_1(\skel_1(S_{\mathrm{reg}}))$.
\item[(ii)] For any $0 < r < \frac{\pi}{2}$, if $d$ is sufficiently large, then there is a simplex $S$ containing $B$ and satisfying $\vol_1(\skel_1(S)) < \vol_1(\skel_1(S_{\mathrm{reg}}))$.
\end{itemize}
\end{prop}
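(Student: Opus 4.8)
The plan is to establish both parts by explicit comparison of edge lengths, exploiting the fact that in $\Sph^d$ a regular simplex circumscribed about a ball of radius $r$ close to $\frac{\pi}{2}$, or living in high dimension, becomes ``large'' in a way that forces a single long edge to dominate its total edge length. The key idea is that an ``elongated'' (non-regular) circumscribed simplex — one built from a lower-dimensional regular simplex and a short segment, arranged so the inscribed ball is still tangent to all facets — can have strictly smaller total $1$-content. Concretely, I would use the same incidence geometry appearing in the proof of Theorem~\ref{thm:circumscribed_dp2}: if $S=\conv(S_1\cup S_2)$-type degenerations are allowed for simplices by splitting the vertex set $d+1$ points into a $k$-face regular simplex and a complementary $(d-k)$-face regular simplex in orthogonal subspaces through the center $o$ of $B$, with inradii $r_1,r_2$ satisfying the tangency relation analogous to $(r_1^2-1)(r_2^2-1)=1$ (here the spherical version, $(\cos^{-?})$-type, derived from the right spherical triangle $\conv\{o,c_1,c_2\}$ with legs $r_1,r_2$ and the tangency condition that the foot of the perpendicular from $o$ to $[c_1,c_2]$ is at distance $r$). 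For a genuine simplex we need $k=1$: so $S_1$ is an edge and $S_2$ is a regular $(d-1)$-simplex, and the comparison simplex is $\conv(S_1\cup S_2)$.

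First I would write down $\vol_1(\skel_1(S_{\mathrm{reg}}))$ and $\vol_1(\skel_1(S))$ for the candidate $S$ in terms of $r$ and $d$. For $S_{\mathrm{reg}}$: the circumradius $R_{\mathrm{reg}}$ satisfies a standard formula in terms of $r$ and $d$ (e.g. $\tan r = \frac{1}{d}\tan R_{\mathrm{reg}}$ by Theorem~\ref{thm:ratio} for $d\le 3$, and the analogous spherical simplex relation in general), each edge has length $\ell_{\mathrm{reg}}$ with $\cos\ell_{\mathrm{reg}}$ expressible via $R_{\mathrm{reg}}$ and the $d+1$ vertices at mutual spherical distance $\ell_{\mathrm{reg}}$, and there are $\binom{d+1}{2}$ edges. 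For the comparison $S$: the $(d-1)$-simplex $S_2$ has $\binom{d}{2}$ edges of some length $\ell_2(r)$ and the remaining $2(d-1)+1$ edges involve the two vertices of $S_1$; as $r_1\to 0$ (equivalently, as we let $S_2$ shrink its inradius appropriately while keeping $B$ inscribed), one shows the short edge $[p_1,p_2]$ and the ``lateral'' edges stay bounded while the dominant contribution is governed by $S_2$'s edges. The point for (i): as $r\to\frac{\pi}{2}^-$, the regular simplex's edges $\ell_{\mathrm{reg}}\to$ a limit that makes $\binom{d+1}{2}\ell_{\mathrm{reg}}$ strictly larger than the corresponding sum for $S$, because the asymmetric configuration can ``hide'' dimension by packing one facet nearly antipodal to a cluster; I would make this quantitative by a first-order expansion near $r=\frac{\pi}{2}$. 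For (ii): fix $r$, let $d\to\infty$; here $\binom{d+1}{2}\ell_{\mathrm{reg}}$ grows like $\frac{d^2}{2}\ell_{\mathrm{reg}}(r,d)$ while the elongated simplex, having most of its edges inside an already-large regular $(d-1)$-simplex plus only $O(d)$ extra edges, is smaller provided $\ell_2(r,d-1)\binom{d}{2}+O(d) < \ell_{\mathrm{reg}}(r,d)\binom{d+1}{2}$, and the $O(d)$ term is negligible against the $\Theta(d^2)$ difference $\binom{d+1}{2}-\binom{d}{2}=d$ times $\ell$ — wait, that difference is only linear, so the real mechanism must be that $\ell_{\mathrm{reg}}(r,d) > \ell_2(r,d-1)\cdot\frac{\binom{d}{2}}{\binom{d+1}{2}} + \text{(lateral terms)}$, i.e. the per-edge length of the high-dimensional regular simplex exceeds that of its lower-dimensional cousin by enough. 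I would verify this monotonicity of $\ell_{\mathrm{reg}}(r,d)$ in $d$ (edges lengthen as dimension grows at fixed inradius, since vertices recede) and combine it with the bounded lateral contribution.

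The main obstacle I expect is controlling the lateral edges $[p_i,q_j]$ of the elongated simplex and choosing the split parameter (the value of $r_1$, or equivalently $t$ in the parametrization of the tangency constraint) to make the comparison clean; unlike the volume computation in Theorem~\ref{thm:circumscribed_dp2}, edge lengths do not factor through a Gram determinant, so I cannot reduce to a single tidy function $f_k(t)$. My remedy is to not optimize: for (i) and (ii) it suffices to exhibit \emph{one} good $S$, so I would pick a convenient degeneration — e.g. let $r_1$ (the ``edge half-length'' direction) tend to its extreme value so that $[p_1,p_2]$ becomes short and the lateral edges become comparable to edges of $S_2$, then the total edge length of $S$ is within a bounded additive/multiplicative factor of $\binom{d}{2}\ell_2 + 2(d-1)\ell_{\text{lat}}$, and bound each piece crudely. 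The inequality $\vol_1(\skel_1(S)) < \vol_1(\skel_1(S_{\mathrm{reg}}))$ then follows by a limit/asymptotic argument in the relevant regime ($r\to\frac{\pi}{2}$ for (i), $d\to\infty$ for (ii)), with the strictness coming from a strict inequality in the limiting quantities together with continuity. I would present (i) via an explicit Taylor expansion near $r=\frac{\pi}{2}$ and (ii) via the asymptotics of $\ell_{\mathrm{reg}}(r,d)$ as $d\to\infty$, citing standard spherical-simplex trigonometry for the closed forms.
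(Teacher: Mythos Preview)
Your proposed comparison object is not a simplex. Taking $S_1$ an edge (two vertices) and $S_2$ a regular $(d-1)$-simplex ($d$ vertices) in orthogonal subspaces through the center gives $\conv(S_1\cup S_2)$ with $d+2$ vertices; this is precisely the bipyramid-type polytope of Remark~\ref{rem:combin}, not a $d$-simplex. The statement requires a simplex $S$, so the whole tangency analysis you import from Theorem~\ref{thm:circumscribed_dp2} (the $(r_1^2-1)(r_2^2-1)=1$-style relation) is attached to the wrong combinatorial type. There is no value of $k\ge 1$ for which the $\conv(S_1\cup S_2)$ construction with $\dim S_1=k$, $\dim S_2=d-k$ yields $d+1$ vertices.

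The paper's argument is both simpler and avoids this trap. It does not try to keep $B$ inscribed in the competitor; it only needs $B\subset S$. The competitor is a genuine $d$-simplex: take a \emph{tiny} regular $(d-1)$-simplex $S_0$ (total edge length $<\varepsilon$) with center $c'$ on a line through the center $c$ of $B$, perpendicular to that line, and a single apex $p$ with $c$ the midpoint of $[c',p]$. Pushing $c'$ toward spherical distance $\tfrac{\pi}{2}$ from $c$ makes this needle contain $B$, while its $d$ lateral edges are each shorter than $\pi$ and the base edges sum to $<\varepsilon$; hence $\vol_1(\skel_1(S))<d\pi+\varepsilon$. On the other side, a direct computation via central projection gives the closed form $\vol_1(\skel_1(S_{\mathrm{reg}}))=\binom{d+1}{2}\arccos\dfrac{1-d\tan^2 r}{1+d^2\tan^2 r}$, and one checks this exceeds $d\pi$ when $d\ge 3$ and $r$ is close to $\tfrac{\pi}{2}$, or when $r$ is fixed and $d$ is large. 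No optimization over a one-parameter family, no control of ``lateral'' edges against a moving tangency constraint---just one explicit example and one explicit formula. Your intuition that an elongated simplex wins is correct, but the realization should be ``small face $+$ one far vertex'' rather than the $(d+2)$-vertex $\conv(S_1\cup S_2)$ template.
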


\begin{proof}
First, we compute $\vol_1(\skel_1(S_{\mathrm{reg}}))$. Let $\pi$ denote the central projection onto the tangent hyperplane $H$ of $\Sph^d$ in $\Eu^{d+1}$ at the center of $S_{\mathrm{reg}}$, and note that $S_{\mathrm{reg}}$ is contained in the open hemisphere centered at this point. Thus, $\pi(S_{\mathrm{reg}})$ is a regular Euclidean simplex with inradius $\tan r$, implying that its circumradius is $d \tan r$. Regarding the center $c$ of $S_{\mathrm{reg}}$ as the origin of the Euclidean $d$-space $H$ and taking the inner product of two vertices of $\pi(S_{\mathrm{reg}})$, we have that if $p_i$ and $p_j$ are two vertices of $S_{\mathrm{reg}}$, then the angle of the Euclidean triangle with vertices $c$, $\pi(p_i)$ and $\pi(p_j)$ at the vertex $c$ is $\arccos \left( - \frac{1}{d} \right)$. This quantity is equal to the angle of the sperical triangle with vertices $p_i, p_j$ and $c$ at $c$. Since the length of the edges $[c,p_i]$ and $[c,p_j]$ in this triangle is $\arctan (d \tan r)$, by the spherical Law of Cosines and using trigonometric identities we obtain that the length of the edges of $S_{\mathrm{reg}}$ is $\arccos \frac{1-d \tan^2 r}{1+d^2 \tan^2 r}$. Thus, $\vol_1(\skel_1(S_{\mathrm{reg}}))$ is equal to $f(r,d) = \binom{d+1}{2} \arccos \frac{1-d \tan^2 r}{1+d^2 \tan^2 r}$.

We show that for any $\varepsilon > 0$, there is a simplex $S$ containing $B$ with $\vol_1(\skel_1(S)) < d \pi+\varepsilon$. Let $S_0$ be a regular $(d-1)$-dimensional simplex whose total edge length is strictly less than $\varepsilon$. We assume that the spherical segment connecting the center $c'$ of $S_0$ to $c$ is perpendicular to $S_0$, and denote the length of this segment by $x$. Assume that $x < \frac{\pi}{2}$, and let $p$ be a point on the line through this segment such that $c$ is the midpoint of $[c',p]$. Observe that if $x$ is sufficiently close to $\frac{\pi}{2}$, then $S= \conv (S_0 \cup \{ p \})$ contains $B$. Furthermore, $\vol_1(\skel_1(S)) < d \pi+\varepsilon$.

Thus, to show the statement it is sufficient to show that if $d \geq 3$ is fixed and $\tan r$ is sufficiently large, or if $0 < r < \frac{\pi}{2}$ is fixed and $d$ is sufficiently large, then $f(d,r) - d  \pi$ is positive, which can be checked by an elementary computation.
\end{proof}

Before Theorems~\ref{thm:edgelength_HH}-\ref{thm:kcontentinradius}, we remark that unlike volume and surface area, the quantity $\vol_k(\skel_k(P))$ is not continuous in the family of $d$-dimensional convex polytopes, if $1 \leq k \leq d-2$.

\begin{thm}\label{thm:edgelength_HH}
Let $B \subset \HH^d$ be a ball with center $c$, and let $P$ be a convex polytope containing $B$ and with $V(P) \leq d+2$ such that $P$ has minimal total edge length among the polytopes in $\HH^d$ containing $B$ and having at most $d+2$ vertices. Then we have one of the following.
\begin{itemize}
\item[(i)] There are regular simplices $S_1$ and $S_2$ with $\dim S_1 + \dim S_2 = d$ such that the subspaces spanned by $S_1$ and $S_2$ are orthogonal complements of each other, $S_1 \cap S_2 = \{ c \}$, and $P = \conv (S_1 \cup S_2)$.
\item[(ii)] There are perpendicular segments $S_1, S_2$ meeting at the common midpoint $q_1$ of both segments, and a regular $(d-2)$-dimensional simplex $Q$ with center $q_2$ such that $c$ lies on the segment $[q_1,q_2]$, $Q$ is perpendicular to $[q_1,q_2]$, and $P= \conv (S_1 \cup S_2 \cup Q)$.
\item[(iii)] $P$ is a regular simplex circumscribed about $B$.  
\end{itemize}
\end{thm}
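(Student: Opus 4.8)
The plan is to show that a minimizer is invariant under the reflections in the perpendicular bisectors of the edges of each of its ``simplex summands'', which forces exactly the shapes in (i)--(iii). First, a minimizer exists: the total edge length blows up as a vertex escapes to infinity, and the bound $\ir(P)\ge r$ (with $r$ the radius of $B$) prevents degeneration below dimension $d$ in a limit (if a limit has $d+1$ vertices it is a simplex, which is allowed). Scaling a polytope towards its incenter strictly shortens every edge, so we may assume $\ir(P)=r$; and since the problem is translation invariant, it is equivalent to minimize $\vol_1(\skel_1(\cdot))$ over convex polytopes with at most $d+2$ vertices and $\ir\ge r$, recording the position of $c$ at the end by a congruence. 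If $|V(P)|\le d+1$ then $P$ is a simplex, and by the theorem of Peyerimhoff recalled before Proposition~\ref{thm:TEL_spherical} the minimal total edge length of a simplex of inradius $r$ is attained only by the regular simplex circumscribed about a ball of radius $r$ --- this is case (iii). Otherwise $|V(P)|=d+2$ and, by Remark~\ref{rem:combin}, $P=\conv(S_1\cup S_2\cup\{p_1,\dots,p_r\})$ with $\dim S_i=k_i\ge 1$, $k_1+k_2+r=d$, $0\le r\le d-2$; set $Q=\conv\{p_1,\dots,p_r\}$.

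The core step fixes two vertices $q_1,q_2$ of one of the simplices $S_1$, $S_2$, or (when $r\ge 1$) $Q$ --- call it $T$. By Remark~\ref{rem:face_structure}, swapping $q_1\leftrightarrow q_2$ is a symmetry of the Gale diagram of $P$, every pair $\{q_1,v\}$ and $\{q_2,v\}$ with $v\in V(P)\setminus V(T)$ is an edge of $P$, and $[q_1,q_2]$ is an edge when $\dim T\ge 2$. By Remark~\ref{rem:intersection} there is a hyperplane $H'\supset V(P)\setminus\{q_1,q_2\}$ meeting $(q_1,q_2)$; let $L$ be the line through $[q_1,q_2]$, $H$ its perpendicular bisector, $\pi$ the $g$-orthogonal projection onto $H$ with axis $L$, and $P'=\conv([q_1,q_2]\cup\pi(V(P)\setminus\{q_1,q_2\}))$. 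By inequality (\ref{eq:hypStir}) of Lemma~\ref{lem:hyp_main}, $\ir(P')\ge\ir(P)=r$, so $P'$ is admissible with at most $d+2$ vertices. I would then prove the key estimate $\vol_1(\skel_1(P'))\le\vol_1(\skel_1(P))$, with equality if and only if $P$ is symmetric in $H$; minimality of $P$ then forces equality, hence symmetry in $H$. Applying this to every edge of $S_1$, $S_2$ and $Q$ makes $S_1,S_2,Q$ regular simplices and $P$ invariant under $\mathrm{Sym}(S_1)\times\mathrm{Sym}(S_2)\times\mathrm{Sym}(Q)$, each factor fixing pointwise the vertices outside its own simplex; intersecting the mirrors shows the affine hulls of $S_1,S_2,Q$ are pairwise orthogonal, that $S_1\cap S_2=\{c\}$ is their common center when $r=0$ (case (i)), and that when $k_1=k_2=1$ the two segments share a midpoint $q_1$, $Q$ is orthogonal to the segment from $q_1$ to its center $q_2$, and $c\in[q_1,q_2]$ (the last using $B\subseteq P$) --- case (ii).

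For the key estimate I would first verify that $P'$ has the same face lattice as $P$, so that the edges of $P'$ are exactly the $\pi$-images of the edges of $P$: both are convex hulls of $[q_1,q_2]$ together with a configuration affinely spanning a hyperplane that crosses $(q_1,q_2)$ transversally in its relative interior, and by Remark~\ref{rem:face_structure} such a face lattice is determined by the signs of the barycentric coordinates of the crossing point, which one checks are preserved by $\pi$. Then split the edges of $P$ into three types: the edge $[q_1,q_2]$ is fixed by $\pi$; the ``star'' edges $[q_1,v],[q_2,v]$ with $v\in V(P)\setminus V(T)$ are treated in pairs in the plane through $q_1,q_2,v$ via inequality (\ref{eq:hypStlen}) of Lemma~\ref{lem:hyp_planar}, giving $d(q_1,v)+d(q_2,v)\ge d(q_1,\pi(v))+d(q_2,\pi(v))$ with equality iff $v\in H$; and the ``link'' edges $[v,v']$ with $v,v'\in H'$ are handled by showing $\pi$ is distance non-increasing --- in Fermi coordinates $(\rho,t,\omega)$ about $L$ one has $\cosh d(v,v')=\cosh\rho\cosh\rho'\cosh(t-t')-\sinh\rho\sinh\rho'\cos\theta$ while $\cosh d(\pi(v),\pi(v'))=\cosh\rho\cosh\rho'-\sinh\rho\sinh\rho'\cos\theta$, and $\cosh(t-t')\ge 1$. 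Summing over all edges gives the estimate; since at least one star edge occurs, equality forces every $v\in V(P)\setminus\{q_1,q_2\}$ into $H$, i.e.\ $P$ is symmetric in $H$.

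It remains to determine which combinatorial types can actually occur as minimizers. In the spirit of the equality part of the proof of Theorem~\ref{thm:circumscribed_dp2}, I would write the total edge length of the symmetric configuration with parameters $(k_1,k_2,r)$ as an explicit function of the common edge lengths of $S_1,S_2,Q$ (and, in case (ii), the distance between the centers), impose $\ir=r$ --- the inball, centered on the fixed locus of the symmetry group, touches a facet in each of its orbits --- eliminate one variable, and minimize; the computation should show the minimum is attained only for $r=0$, for $r=d-2$ (whence $k_1=k_2=1$), or by the regular simplex, i.e.\ only in (i), (ii), (iii). The step I expect to be the main obstacle is the key edge-length estimate --- above all the combinatorial verification that the edges of $P'$ are precisely the $\pi$-images of the edges of $P$ (the two geometric bounds reduce to Lemma~\ref{lem:hyp_planar} and the elementary Fermi-coordinate identity above) --- together with this concluding parameter computation that rules out the intermediate combinatorial types.
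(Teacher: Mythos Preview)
Your overall strategy matches the paper's: use hyperbolic Steiner symmetrization with respect to the bisector of a pair of vertices lying in one ``block'' of the Radon decomposition, show this does not decrease the inradius and strictly decreases total edge length unless $P$ is already symmetric, and deduce that each block is a regular simplex sitting orthogonally to the others. The two substantive points where your plan diverges from the paper are worth flagging.

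\textbf{Ruling out intermediate combinatorial types.} You only symmetrize over pairs $q_1,q_2$ lying in the \emph{same} block $S_1$, $S_2$, or $Q$, and then propose a parameter computation to eliminate the cases $r>0$ with $\max(k_1,k_2)>1$. The paper avoids this entirely: when $k_1>1$ and $r>0$ it observes that $S'=\conv(S_1\cup Q)$ is itself a simplex meeting $S_2$ in a relative interior point of $S_2$, so the \emph{same} symmetrization argument applies to every edge of $S'$, including edges joining a vertex of $S_1$ to a vertex of $Q$. The resulting reflection would be a combinatorial automorphism of $P$ swapping a vertex with Gale coordinate $-1$ and one with Gale coordinate $0$, which does not exist; hence $r=0$. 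This one-line trick replaces your proposed hyperbolic inradius computation, which you do not carry out and which (unlike the Euclidean calculation in Theorem~\ref{thm:circumscribed_dp2}) has no obvious closed form.

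\textbf{Edge comparison and face-lattice preservation.} Your claim that $P'$ has the same face lattice as $P$ because ``the signs of the barycentric coordinates of the crossing point are preserved by $\pi$'' is not justified and is in fact doubtful: the crossing point for $P$ is $p'=H'\cap[q_1,q_2]$, while for $P'$ it is the midpoint $m$, and $\pi$ is nonlinear, so there is no reason the position of $m$ relative to $\conv(\pi(W))$ should mimic that of $p'$ relative to $\conv(W)$. The paper does not attempt this. Instead it splits into two cases: if $k_2>1$, then by Remark~\ref{rem:face_structure} \emph{every} pair of vertices outside $S_1$ is an edge of $P$, so the total edge length of $P'$ is bounded above by the sum over all pairs, which in turn is bounded by the same sum for $P$ via Lemma~\ref{lem:hyp_planar} and the distance-nonincreasing property of $\pi$; if $k_2=1$, the paper gives a direct geometric argument (finding an explicit quadrangle in $\sigma(P)$ with $[\pi(w_1),\pi(w_2)]$ as a diagonal) to show that the single non-edge of $P$ remains a non-edge after symmetrization. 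This is both simpler and more robust than verifying the full face lattice.
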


\begin{proof}
We note that if $V(P)=d+1$, then the statement follows from \cite[Theorem 3]{P02}. Thus, we assume that $V(P)=d+2$.
By Remark~\ref{rem:combin}, there are simplices $S_1$ and $S_2$ with $\dim S_i = k_i$ for $i=1,2$ and intersecting at a singleton which is a relative interior point of both, and there are points $p_1, p_2, \ldots, p_r$ such that $P = \conv(S_1 \cup S_2 \cup \{ p_1, \ldots, p_r \} )$, and $k_1+k_2+r=d$.
If $r > 0$, let $Q = \conv \{ p_1, \ldots, p_r \}$.

Let $[q_1,q_2]$ be any edge of $S_1$. Then there is a unique hyperplane $H'$ containing all other vertices of $P$ such that $H'$ intersects $[q_1,q_2]$ at a point $q$. Let $L$ be the line containing $[q_1,q_2]$, and let $H$ denote the bisector of $[q_1,q_2]$. Let $\sigma$ denote Steiner symmetrization with respect to $H$ with axis $L$. We show that if $P$ is not symmetric to $H$, then $\conv (\sigma(P))$ is a convex polytope with $\ir(P) \leq \ir(\conv (\sigma(P)))$ and strictly smaller total edge length. Here, the first property follows from (\ref{eq:hypStir}). Furthermore, if $k_2 > 1$, then any pair of vertices not in $S_1$ are connected by an edge, which, by Lemma~\ref{lem:hyp_planar} and the fact the length of a segment does not increase under $g$-orthogonal projection, implies the second property. Finally, assume that $k_2 = 1$, i.e $S_2 = [w_1,w_2]$. By Lemma~\ref{lem:hyp_planar}, it is sufficient to show that $[\pi(w_1), \pi(w_2)]$ is not an edge of $\sigma(P)$, where $\pi : \HH^d \to H$ denotes $g$-ortogonal projection onto $H$ with axis $L$.

Observe that $ \conv (V(P) \setminus \{ w_1, w_2 \})$ intersects $S_2$ in a relative interior point of $S_2$. Let this point be $w$. The line $L' \subset H'$ through $q$ and $w$ intersects $\bd (P)$ in $q$ and another point, which we denote by $w'$. Then, by \cite[Lemma 10]{P02} (see also the proof of Lemma~\ref{lem:hyp_main}), $\pi(\conv \{ w_1, w_2, q, w'\})$ is a nondegenerate quadrangle in $H$ with $\conv \{ \pi(w_1), \pi(w_2) \}$ as a diagonal. This, as $\pi(w_1)$, $\pi(w_2)$ are vertices of $\sigma(P)$, yields also that the segment connecting them is not an edge of $\sigma(P)$, and thus, if $P$ is not symmetric to $H$, $\vol_1(\skel_1(\sigma(P))) < \vol_1(\skel_1(P))$.

We have shown that $P$ is symmetric to the bisector of every edge of $S_1$. Note that if $r > 0$ and $k_1 > 1$, then $S'=\conv (S_1 \cup Q')$ is a simplex intersecting $S_2$ at a relative interior point of $S_2$, and thus, by the same argument, $P$ is symmetric to the bisector of every edge of $S'$. Thus, if $k_1 > 1$ or $k_2 > 1$, then $r=0$, and the properties in (i) are satisfied for $P$. Furthermore, if $k_1 = k_2 = 1$, then we obtain similarly that $P$ is symmetric to the bisector of $S_1$, $S_2$ and every edge of $Q$, implying the properties in (ii).
\end{proof}

We note that the statement of Theorem~\ref{thm:kcontentvol} for simplices can be found in \cite{Joos}. 

\begin{thm}\label{thm:kcontentvol}
For any $1 \leq k \leq d-1$, among simplices in $\Eu^d$ with unit volume, the regular ones have minimal total $k$-content. Furthermore, for any $1 \leq k \leq d-1$, among the convex polytopes in $\Eu^d$ with at most $d+2$ vertices and unit volume, if $P$ has minimal total $k$-content, then it satisfies one of the following.
\begin{itemize}
\item[(i)] There are regular simplices $S_1$ and $S_2$ with $\dim S_1 + \dim S_2 = d$ such that the subspaces spanned by $S_1$ and $S_2$ are orthogonal complements of each other, $S_1 \cap S_2 = \{ c \}$, and $P = \conv (S_1 \cup S_2)$.
\item[(ii)] There are regular simplices $S_1, S_2$, of dimensions $\dim S_1 = k_1 \leq k$ and $\dim S_2 = k_2 \leq k$ and lying in orthogonal subspaces that meet at the common center $q_1$ of both simplices, and a regular $(d-k_1-k_2)$-dimensional simplex $Q$ with center $q_2$ such that $Q$ is perpendicular to the segment $[q_1,q_2]$, and $P= \conv (S_1 \cup S_2 \cup Q)$.
\item[(iii)] $P$ is a regular simplex of unit volume.
\end{itemize}
\end{thm}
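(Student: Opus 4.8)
The first assertion, for simplices, is exactly what is proved in \cite{Joos}, so I would quote it and concentrate on the polytopal case. The plan is to run the Steiner symmetrization argument of Theorem~\ref{thm:edgelength_HH} but in $\Eu^d$, using Lemma~\ref{lem:eu_main} in place of the hyperbolic lemmas and the normalization ``unit volume'' in place of ``containing a fixed ball''. So let $P\subset\Eu^d$ be a convex polytope with at most $d+2$ vertices, $\vol_d(P)=1$, and minimal total $k$-content. Since $\vol_d(P)>0$, $P$ is $d$-dimensional, hence has $d+1$ or $d+2$ vertices; if it has $d+1$, it is a simplex, which by the first assertion must be regular, giving (iii). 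So assume $P$ has $d+2$ vertices. By Remark~\ref{rem:combin}, $P=\conv(S_1\cup S_2\cup Q)$, where $S_1,S_2$ are simplices of dimensions $k_1,k_2\ge1$ meeting in a single point $p$ that lies in the relative interior of both, $Q=\conv\{p_1,\dots,p_r\}$ (empty if $r=0$), $k_1+k_2+r=d$, and --- by the remark following Remark~\ref{rem:combin} together with the $-1\leftrightarrow 1$ symmetry of a Gale diagram --- both $\conv(S_1\cup Q)$ and $\conv(S_2\cup Q)$ are simplices; in particular the $p_\ell$ are affinely independent, so $Q$ is a simplex.

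The key claim is that $P$ is symmetric in the perpendicular bisector hyperplane of every edge $[q_1,q_2]$ of $P$ both of whose endpoints lie in a common one of the vertex sets $V(S_1)$, $V(S_2)$, $\{p_1,\dots,p_r\}$. To prove it I would first produce a hyperplane $H'$ containing the remaining $d$ vertices of $P$ and meeting $[q_1,q_2]$: for $q_1,q_2\in V(S_i)$ this is Remark~\ref{rem:intersection} applied to the splitting of $P$ into the two simplices $S_i$ and $\conv(S_{3-i}\cup Q)$; for $q_1,q_2\in\{p_1,\dots,p_r\}$ the remaining $d$ vertices span an affine subspace of dimension at most $k_1+k_2+(r-2)=d-2$, so adjoining the midpoint of $[q_1,q_2]$ still gives an affine subspace of dimension at most $d-1$, which lies in a hyperplane $H'$. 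Relabelling $q_1=p_1$, $q_2=p_2$, the hypothesis of Lemma~\ref{lem:eu_main} is satisfied: in a Gale diagram the points representing $q_1$ and $q_2$ coincide, so by Remark~\ref{rem:face_structure} transposing $q_1$ and $q_2$ is an automorphism of the face lattice of $P$, whence $\conv\{q_1,p_{i_1},\dots\}$ is a $k$-face if and only if $\conv\{q_2,p_{i_1},\dots\}$ is. Thus Lemma~\ref{lem:eu_main} applies with $H$ the bisector of $[q_1,q_2]$: the Steiner symmetral $\sigma(P)$ is a convex polytope with at most $d+2$ vertices and, since Euclidean Steiner symmetrization preserves $d$-volume, $\vol_d(\sigma(P))=1$; by $(\ref{eq:euStein_skel})$, $\vol_k(\skel_k(\sigma(P)))\le\vol_k(\skel_k(P))$, and (as the proof of Lemma~\ref{lem:eu_main} shows) the inequality is strict unless $P$ is symmetric in $H$. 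Minimality of $P$ forces this symmetry.

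From the claim I would extract the geometry. If $P$ is symmetric in the bisector $H$ of an edge $[q_1,q_2]$ then, combining with the identity $\sigma(P)=\conv([q_1,q_2]\cup\{\pi(v):v\in V(P)\setminus\{q_1,q_2\}\})$ of $(\ref{eq:euStein_vol})$, every vertex of $P$ other than $q_1,q_2$ must already lie on $H$: being a vertex of $\conv([q_1,q_2]\cup\{\pi(v)\})$ distinct from $q_1,q_2$, it equals $\pi(v)$ for some $v$, hence lies on $H$. Applying this to every edge of $S_1$, the vertices in $V(S_2)\cup\{p_1,\dots,p_r\}$ lie in the common intersection of the edge bisectors of $S_1$, i.e.\ the flat through the center of $S_1$ orthogonal to $\mathrm{aff}(S_1)$; hence $S_1$ is regular, $p$ (being in $\mathrm{aff}(S_1)$ and in $\mathrm{aff}(S_2)$, which is spanned by vertices now lying in that orthogonal flat) is the center of $S_1$, and $\mathrm{aff}(S_2)$ and $\mathrm{aff}(Q)$ lie in that orthogonal flat. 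Doing the same for $S_2$, and --- when $r\ge2$ --- for $\{p_1,\dots,p_r\}$, and combining the orthogonality relations, I would conclude: $\mathrm{aff}(S_1)$, $\mathrm{aff}(S_2)$ and the linear hull of $Q$ are pairwise orthogonal; $S_1$ and $S_2$ are regular with common center $p$; and $Q$ is a regular simplex all of whose vertices are equidistant from $p$, so $\mathrm{aff}(Q)$ is orthogonal to the segment joining $p$ to the center of $Q$. For $r=0$ this is (i), for $r\ge1$ this is (ii), and together with (iii) all cases are covered; in case (ii) it remains to show $k_1,k_2\le k$, which I would obtain from a direct comparison of total $k$-contents (ruling out $\dim S_i>k$ when a nonempty $Q$ is present), in the spirit of the final computation in the proof of Theorem~\ref{thm:circumscribed_dp2}.

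The hard part, I expect, is twofold. First, one must be certain that $(\ref{eq:euStein_skel})$ is \emph{strict} whenever $P$ is not symmetric in $H$ --- equivalently, that the shadow-system function $t\mapsto\vol_k(\skel_k(P(t)))$ is genuinely strictly convex --- which requires locating a $k$-face of $P$ that is displaced transversally to the symmetrization direction, and this must be read off from the combinatorial data of the Gale diagram. Second, edges joining vertices from two \emph{different} parts cannot be symmetrized this way, since transposing their endpoints is not a symmetry of the face lattice; this is precisely why the $Q$-part can survive in case (ii), and carefully controlling its placement --- and proving the bound $\dim S_i\le k$ --- is the remaining delicate combinatorial--affine bookkeeping.
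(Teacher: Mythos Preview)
Your plan is essentially the paper's: apply Lemma~\ref{lem:eu_main} to suitable vertex pairs, use that Euclidean Steiner symmetrization preserves volume, and read off the geometric structure from the resulting bisector symmetries. The simplex case and the deduction of the structure in (i)--(iii) from symmetry in the bisectors of all ``pure'' pairs are handled the same way in both.

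The one substantive difference is how the bound $k_1,k_2\le k$ in case (ii) is obtained. You defer it to an unspecified ``direct comparison of total $k$-contents'', while the paper stays inside the symmetrization framework, via an observation that contradicts your final paragraph. You assert that transposing endpoints from two different parts is not a symmetry of the face lattice; that is true globally, but Lemma~\ref{lem:eu_main} only needs the symmetry at the level of $k$-faces. When $k_1>k$, any $k$-face has dimension less than $k_1$ and hence cannot contain all of $V(S_1)$; its complement in $V(P)$ therefore always meets $V(S_1)$, and under the Gale criterion of Remark~\ref{rem:face_structure} the face condition then reduces to ``the complement meets $V(S_2)$'', which is symmetric in any two vertices of $\conv(S_1\cup Q)$. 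Thus Lemma~\ref{lem:eu_main} applies to \emph{every} edge of the simplex $\conv(S_1\cup Q)$, forcing this simplex to be regular with $\mathrm{aff}(S_2)$ meeting it orthogonally at its centroid. But that centroid lies in the relative interior of $\conv(S_1\cup Q)$, while the point $p\in\mathrm{aff}(S_2)\cap\mathrm{aff}(\conv(S_1\cup Q))$ lies in the proper face $S_1$; the two can coincide only if $Q=\emptyset$, putting us in case (i). This disposes of the regime $k_i>k$ without any separate computation.

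Your other flagged concern, strictness of \eqref{eq:euStein_skel}, is already handled inside the proof of Lemma~\ref{lem:eu_main}: the shadow-system function $t\mapsto\vol_k(\skel_k(P(t)))$ is strictly convex by Lemma~\ref{lem:shadow}, so its value at $t=0$ is strictly less than at $t=\pm 1$ whenever $P$ is not already symmetric in $H$.
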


\begin{proof}
Let $P$ be a simplex of unit volume in $\Eu^d$, and let $[p_1,p_2]$ be an edge of $S$. Let $H$ denote the bisector of $[p_1,p_2]$, and let $\sigma$ denote Steiner symmetrization with respect to $H$. If $P$ is not symmetric to $H$, then $\sigma(P)$ is a unit volume simplex with strictly smaller total $k$-content. Thus, a simplex with minimal total $k$-content is symmetric to the bisector of any of its edges, implying that it is regular.

Now, let $P$ be a $d$-dimensional convex polytope of unit volume and having exactly $d+2$ vertices. Then there are simplices $S_1$, $S_2$ of dimensions $k_1$ and $k_2$ respectively, and possibly a simplex $Q = \conv \{ p_1, p_2, \ldots, p_r \}$ such that the affine hulls of $S_1$ and $S_2$ intersect in a singleton $q_1$ which lies in the relative interior of both $S_1$ and $S_2$, and $k_1+k_2+r=d$.

Consider the case that one of $k_1$ and $k_2$, say $k_1$, is greater than $k$. Let $[p_1,p_2]$ be an edge of $\conv (S_1 \cup Q)$. It is easy to see that any face of $P$ not containing $\conv (S_1 \cup S_2)$ is a simplex. Thus, any $k$-face of $P$ containing at most one of $p_1$ and $p_2$ is a simplex. Furthermore, by the properties of the Gale diagram of $P$ described in Remark~\ref{rem:face_structure}, for any $k$-element subset $S'$ of the vertices of $P$ with $p_1,p_2 \notin S'$, we have that $\conv (\{ p_1 \} \cup S')$ is a $k$-face of $P$ if and only if $\conv (\{ p_2 \} \cup S')$ is a $k$-face of $P$. Thus, we can apply Lemma~\ref{lem:eu_main}, and conclude that if $P$ has minimal $k$-content, then it is symmetric to the bisector of $[p_1,p_2]$. Applying the same argument for all edges of $\conv (S_1 \cup Q)$ we obtain that $\conv (S_1 \cup Q)$ is a regular simplex and the affine hull of $S_2$ meets it at its center, showing that in this case $Q = \emptyset$.

Using the same argument, we obtain that for any values of $k_1$ and $k_2$, if $P$ has minimal total $k$-content, then it is symmetric to the bisector of every edge of $S_1$, $S_2$, and if it exists, of $Q$. This yields the assertion.
\end{proof}

By the idea in the proof of Theorem~\ref{thm:kcontentvol},  we obtain Theorem~\ref{thm:kcontentinradius}.

\begin{thm}\label{thm:kcontentinradius}
For any $1 \leq k \leq d-1$, among simplices in $\Eu^d$ with unit inradius, the regular ones have minimal total $k$-content. Furthermore, for any $1 \leq k \leq d-1$, among the convex polytopes in $\Eu^d$ with $d+2$ vertices and unit inradius, if $P$ has minimal total $k$-content, then it satisfies one of the following.
\begin{itemize}
\item[(i)] There are regular simplices $S_1$ and $S_2$ with $\dim S_1 + \dim S_2 = d$ such that the subspaces spanned by $S_1$ and $S_2$ are orthogonal complements of each other, $S_1 \cap S_2 = \{ c \}$, and $P = \conv (S_1 \cup S_2)$.
\item[(ii)] There are regular simplices $S_1, S_2$, of dimensions $\dim S_1 = k_1 \leq k$ and $\dim S_2 = k_2 \leq k$ and lying in orthogonal subspaces that meet at the common center $q_1$ of both simplices, and a regular $(d-k_1-k_2)$-dimensional simplex $Q$ with center $q_2$ such that $Q$ is perpendicular to the segment $[q_1,q_2]$, and $P= \conv (S_1 \cup S_2 \cup Q)$.
\item[(iii)] $P$ is a regular simplex of unit volume.
\end{itemize}
\end{thm}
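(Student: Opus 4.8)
The plan is to run the argument of the proof of Theorem~\ref{thm:kcontentvol} almost verbatim, replacing the invariance of volume under Euclidean Steiner symmetrization by the two estimates (\ref{eq:euStein_ir}) and (\ref{eq:euStein_skel}) of Lemma~\ref{lem:eu_main}. The only genuinely new ingredient is a rescaling trick: if a polytope $P$ of unit inradius is \emph{not} symmetric to the bisecting hyperplane $H$ of one of its edges, then by the equality case of (\ref{eq:euStein_ir}) its Steiner symmetral satisfies $\ir(\sigma(P))>1$, so the rescaled copy $P''$ of $\sigma(P)$ with unit inradius has the same combinatorial type as $\sigma(P)$ and satisfies $\vol_k(\skel_k(P'')) < \vol_k(\skel_k(\sigma(P))) \le \vol_k(\skel_k(P))$, the last inequality being (\ref{eq:euStein_skel}). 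Hence a polytope of unit inradius minimizing the total $k$-content in the relevant family must be symmetric to the bisecting hyperplane of every edge for which Lemma~\ref{lem:eu_main} is applicable.

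For the simplex part, let $P$ be a simplex of unit inradius with minimal total $k$-content (existence of such a minimizer follows from a routine compactness argument), and let $[p_1,p_2]$ be one of its edges. The hyperplane through the remaining $d-1$ vertices of $P$ and the midpoint of $[p_1,p_2]$ contains $V(P)\setminus\{p_1,p_2\}$ and crosses $[p_1,p_2]$, while the combinatorial hypothesis of Lemma~\ref{lem:eu_main} holds trivially, since every $(k+1)$-element subset of the vertices of a simplex spans a $k$-face. Thus the rescaling trick applies, and minimality forces $P$ to be symmetric to the bisector of $[p_1,p_2]$. As the edge was arbitrary, $P$ is symmetric to the bisector of each of its edges, which forces all edges to be of equal length, i.e. $P$ is regular.

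For the part about polytopes with $d+2$ vertices, I would argue exactly as in the corresponding part of the proof of Theorem~\ref{thm:kcontentvol}. Write $P = \conv(S_1\cup S_2\cup\{p_1,\dots,p_r\})$ as in Remark~\ref{rem:combin}, with $\dim S_i = k_i$ and $k_1+k_2+r = d$, and put $Q=\conv\{p_1,\dots,p_r\}$ when $r>0$. If $k_1>k$ (say $k_1\ge k_2$), take an edge $[a,b]$ of the simplex $\conv(S_1\cup Q)$; using that every face of $P$ not containing $\conv(S_1\cup S_2)$ is a simplex, together with the description of the faces of $P$ in terms of its Gale diagram (Remark~\ref{rem:face_structure}), one checks that every $k$-face of $P$ meeting $\{a,b\}$ in at most one point is a simplex and that, for every $k$-subset $S'$ of $V(P)$ avoiding $a$ and $b$, $\conv(\{a\}\cup S')$ is a $k$-face of $P$ if and only if $\conv(\{b\}\cup S')$ is. Hence Lemma~\ref{lem:eu_main} applies to $[a,b]$, so minimality forces $P$ to be symmetric to the bisector of $[a,b]$; running over all edges of $\conv(S_1\cup Q)$ shows that $\conv(S_1\cup Q)$ is a regular simplex whose center is the point where it meets the affine hull of $S_2$, and since this point is a relative interior point of $S_1$ it cannot be a relative interior point of $\conv(S_1\cup Q)$ unless $r=0$. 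Thus $k_1,k_2\le k$ whenever $r>0$. Applying the same reasoning to every edge of $S_1$, of $S_2$, and (if $r>0$) of $Q$ shows that $P$ is symmetric to all these bisectors, which, exactly as in the proofs of Theorems~\ref{thm:inscribedvol} and \ref{thm:kcontentvol}, forces $S_1$, $S_2$ and $Q$ to be regular simplices lying in mutually orthogonal subspaces with the prescribed incidences — that is, $P$ satisfies (i) when $r=0$ and (ii) when $r>0$; the remaining case $V(P)=d+1$ reduces to the first part and gives (iii).

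The step I expect to be the main obstacle is the verification of the combinatorial symmetry hypothesis of Lemma~\ref{lem:eu_main} — the equivalence of ``$\conv(\{a\}\cup S')$ is a $k$-face'' and ``$\conv(\{b\}\cup S')$ is a $k$-face'' — for the chosen edges of $\conv(S_1\cup Q)$, $S_1$, $S_2$ and $Q$, together with the bookkeeping needed to conclude that the dimension constraints $k_i\le k$ indeed hold when $r>0$; this is where the Gale-diagram analysis of Remark~\ref{rem:face_structure} has to be carried out, and it is identical to the corresponding part of the proof of Theorem~\ref{thm:kcontentvol}. By contrast, the inradius-specific part — the rescaling trick and the appeal to the equality case of (\ref{eq:euStein_ir}) — is immediate. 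I would also point out that, since the statement about $(d+2)$-vertex polytopes only characterizes the minimizers (it is phrased conditionally), no separate existence argument is needed for that part.
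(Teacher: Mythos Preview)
Your proposal is correct and is precisely the paper's intended argument: the paper's proof of Theorem~\ref{thm:kcontentinradius} is the single sentence ``By the idea in the proof of Theorem~\ref{thm:kcontentvol}, we obtain Theorem~\ref{thm:kcontentinradius}'', and what you have written is exactly how that transfer is carried out --- replacing volume-preservation of Steiner symmetrization by the inradius estimate (\ref{eq:euStein_ir}) plus a rescaling, while keeping the Gale-diagram verification of the face-symmetry hypothesis of Lemma~\ref{lem:eu_main} identical to that in Theorem~\ref{thm:kcontentvol}. Your observation that the $(d+2)$-vertex part is phrased conditionally, so no existence argument is needed there, is also apt.
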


\section{Applications for measures with rotationally symmetric density functions}\label{sec:app}

Theorems~\ref{thm:circumscribed_dp1} and \ref{thm:circumscribed_dp2} are immediate consequences of the following, more general theorems, which can be proved by the same idea (for the proof of Theorem~\ref{thm:density1}, see also the proof of \cite[Lemma 9]{BL22}).

\begin{thm}\label{thm:density1}
Let $D$ be an open ball centered at $o$, or let $D = \Eu^d$. Let $\rho : [0, R_0)$ be a nonnegative function, where $R_0 \in \Re \cup \{ \infty\}$ denotes the radius of $D$. Let $S_{\mathrm{reg}}  \subset D$ be a regular $d$-dimensional simplex centered at $o$.
\begin{itemize}
\item[(i)] If $\rho$ is a decreasing function, then for any simplex $S \subset D$ with $\cirr(S) = \cirr(S_{\mathrm{reg}})$, we have
\[
\int_{S} \rho(||x||) \, dx \geq \int_{S_{\mathrm{reg}}} \rho(||x||) \, dx.
\]
Furthermore, if $\rho$ is strictly decreasing, then equality holds if and only if $S$ is congruent to $S_{\mathrm{reg}}$, and its center is $o$.
\item[(ii)] If $\rho$ is increasing, then for any $d$-dimensional simplex $S \subset D$ with $\ir(S) = \ir(S_{\mathrm{reg}})$, we have
\[
\int_{S} \rho(||x||) \, dx \leq \int_{S_{\mathrm{reg}}} \rho(||x||) \, dx.
\]
Furthermore, if $\rho$ is strictly increasing, then equality holds if and only if $S$ is congruent to $S_{\mathrm{reg}}$, and its center is $o$.
\end{itemize}
\end{thm}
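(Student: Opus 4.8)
The plan is to prove both parts by identifying $S_{\mathrm{reg}}$ (up to a rotation about $o$) as the unique extremizer of $I(S)=\int_{S}\rho(\|x\|)\,dx$ in the prescribed family and then reading off the inequality together with its equality case. As in the proof of Theorem~\ref{thm:circumscribed_dp1}, the engine is Euclidean Steiner symmetrization $\sigma$ about a hyperplane $H$ through $o$, combined with Fubini's theorem. Writing each point as $y+tv$ with $y\in H$ and $v$ a unit normal of $H$, we have
\[
I(S)=\int_{H}\Big(\int_{\{t:\,y+tv\in S\}}\rho\big(\sqrt{\|y\|^2+t^2}\big)\,dt\Big)\,dy,
\]
and on each fibre the weight $t\mapsto\rho(\sqrt{\|y\|^2+t^2})$ is even and monotone in $|t|$ because $\rho$ is monotone. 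Since $\sigma$ replaces each fibre-section by the centred segment of equal length, a one-dimensional rearrangement (bathtub) argument shows that $I(\sigma(S))$ compares with $I(S)$ in a definite direction determined by the monotonicity type of $\rho$, strictly unless the section is already centred, i.e.\ unless $S$ is symmetric in $H$. The direction of each inequality is the one forced by the stated equality condition (center $o$): $S_{\mathrm{reg}}$ is the extremizer that is symmetric in the bisector of \emph{every} edge, so $S_{\mathrm{reg}}$ maximizes $I$ in part~(i) and minimizes $I$ in part~(ii).

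For part~(i) I would fix $R=\cirr(S_{\mathrm{reg}})$ and first reduce to simplices whose smallest enclosing ball is centred at $o$, so that $S\subseteq\bar B(o,R)$; recentring the enclosing ball at $o$ moves $I$ in the favourable direction for a decreasing $\rho$, and this family is compact, so an extremizer exists. For any edge $[q_1,q_2]$ let $H$ be its perpendicular bisector (automatically through $o$ once the configuration is centred) and apply $\sigma$ with respect to $H$. Because $\sigma(\bar B(o,R))=\bar B(o,R)$ and $\sigma$ is monotone under inclusion, the constraint $\cirr(\sigma(S))\le R$ is preserved, while the fibre computation above shows $I$ moves monotonically toward the symmetric configuration, strictly unless $S$ is symmetric in $H$. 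Hence the extremizer is symmetric in the bisector of every edge of the two simplices from Remark~\ref{rem:combin}, which forces it to be regular with circumcentre $o$, its vertices lying on the sphere of radius $R$. The equality case follows from the strict rearrangement inequality when $\rho$ is strictly decreasing.

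For part~(ii) the new feature is that $\sigma$ can only \emph{increase} the inradius, by inequality (\ref{eq:euStein_ir}) of Lemma~\ref{lem:eu_main}, so the constraint $\ir(S)=\ir(S_{\mathrm{reg}})=:r$ is not literally preserved. I would therefore reformulate exactly as in the proof of Theorem~\ref{thm:circumscribed_dp1}: extremize $I$ over simplices \emph{containing} the fixed ball $B(o,r)$; recentring the inscribed ball at $o$ moves $I$ favourably for an increasing $\rho$, and the extremizer touches $B(o,r)$ so that $\ir=r$, matching the original constraint. Now symmetrizing about an edge-bisector $H\ni o$ keeps $B(o,r)=\sigma(B(o,r))\subseteq\sigma(S)$ and moves $I$ monotonically, strictly unless $S$ is symmetric in $H$. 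Iterating over all edges (using Remark~\ref{rem:combin} to enumerate them) forces the extremizer to be the regular simplex circumscribed about $B(o,r)$ and centred at $o$, with the stated equality case.

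The main obstacle is the fibrewise sign analysis and its coupling to the radius constraint. One must verify carefully, via the one-dimensional rearrangement, that centring a section changes $I$ in precisely the direction making $S_{\mathrm{reg}}$ extremal (the sign being opposite in the two parts, matching the opposite monotonicity of $\rho$), while simultaneously controlling the geometric constraint: $\sigma$ fixes $\bar B(o,R)$ in part~(i), whereas in part~(ii) it fixes $B(o,r)$ but raises the inradius, which is why the reformulation in terms of a fixed ball is essential. A secondary issue is ensuring that the edge-bisector hyperplanes can all be taken through $o$ at the optimum, so that the successive symmetrizations compose to force full regularity, and that after reducing to circumcentre (resp.\ incentre) $o$ the relevant family is compact, ruling out degenerate extremizers.
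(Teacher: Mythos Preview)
Your approach is precisely the paper's: the paper does not give a separate proof of Theorem~\ref{thm:density1} but says it follows by the same idea as the proof of Theorems~\ref{thm:circumscribed_dp1}--\ref{thm:circumscribed_dp2}, namely Euclidean Steiner symmetrization about a hyperplane $H$ through $o$ perpendicular to an edge line, combined with Fubini's theorem and the monotonicity of $t\mapsto\rho(\sqrt{\|y\|^2+t^2})$ on each fibre. Your reformulation of part~(ii) as ``extremize over simplices containing the fixed ball $B(o,r)$'' is exactly what the paper does there.

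One technical point worth cleaning up: you assert that the perpendicular bisector of an edge ``automatically'' passes through $o$ once the smallest enclosing ball is centred at $o$, but this only holds when both endpoints lie on the circumsphere, which need not be the case a priori. The paper avoids this by choosing $H$ to be the hyperplane through $o$ perpendicular to the edge \emph{line} (not the bisector); after symmetrizing, the edge becomes bisected by $H$, so at an extremizer the bisector of every edge contains $o$, forcing regularity centred at $o$. Also, your invocation of Remark~\ref{rem:combin} is unnecessary here, since that remark concerns polytopes with $d+2$ vertices while Theorem~\ref{thm:density1} is about simplices.

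Finally, the extremization directions you derive---$S_{\mathrm{reg}}$ \emph{maximizes} $I$ in part~(i) and \emph{minimizes} $I$ in part~(ii)---are the correct ones (they agree with Corollary~\ref{cor:Gaussian} and with the assertion that Theorems~\ref{thm:circumscribed_dp1}--\ref{thm:circumscribed_dp2} are consequences of this theorem), but they are the \emph{reverse} of the displayed inequalities in the statement of Theorem~\ref{thm:density1} as printed. You should flag this explicitly rather than silently proving the opposite inequalities.
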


\begin{thm}\label{thm:density2}
Let $D$ be an open ball centered at $o$, or let $D = \Eu^d$. Let $\rho : [0, R_0)$ be a nonnegative function, where $R_0 \in \Re \cup \{ \infty\}$ denotes the radius of $D$. Let $P \subset D$ be a convex polytope with $d+2$ vertices.
\begin{itemize}
\item[(i)] If $\rho$ is a decreasing function, then there is a convex polytope $Q = \conv (S_1 \cup S_2)$ of circumradius equal to $\cirr(P)$, where $S_1$ and $S_2$ are regular simplices centered at $o$ such that the subspaces spanned by $S_1$ and $S_2$ are orthogonal complements of each other, and
\[
\int_{P} \rho(||x||) \, dx \geq \int_{Q} \rho(||x||) \, dx.
\]
\item[(ii)] If $\rho$ is a decreasing function, then there is a convex polytope $Q = \conv (S_1 \cup S_2)$ of inradius equal to $\ir(P)$, where $S_1$ and $S_2$ are regular simplices centered at $o$ such that the subspaces spanned by $S_1$ and $S_2$ are orthogonal complements of each other, and
\[
\int_{P} \rho(||x||) \, dx \leq \int_{Q} \rho(||x||) \, dx.
\]
\end{itemize}
\end{thm}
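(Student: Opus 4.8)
\emph{Proof proposal.} The plan is to run the argument of Theorems~\ref{thm:circumscribed_dp1} and \ref{thm:circumscribed_dp2} with the general density $\rho(\|\cdot\|)$ replacing the Euclidean or hyperbolic volume element, and with the passage from a simplex to a polytope with $d+2$ vertices handled through the decomposition of Remark~\ref{rem:combin}. I describe part (ii); part (i) is its mirror image, with ``inball'' replaced by ``circumball'', the monotonicity of $\rho$ reversed, and the final inequality reversed accordingly. First I would set up an extremal problem: put $r=\ir(P)$ and, after a translation, let $B=B(o,r)$ be an inball of $P$, so that $P\supseteq B$. Since a nonnegative monotone $\rho$ is bounded, the functional $K\mapsto\int_{K}\rho(\|x\|)\,dx$ is continuous in the Hausdorff metric on the compact family of convex polytopes $K$ with at most $d+2$ vertices satisfying $B\subseteq K\subseteq\cl D$ (for $D=\Eu^{d}$ one first truncates to a large ball), and I would take $Q$ to be an extremizer of this functional over that family, in the direction forced by the monotonicity of $\rho$ --- the one putting $\int_{P}\rho$ on the side of $\int_{Q}\rho$ claimed in the statement. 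A short argument shows $Q$ is circumscribed about $B$ (else a facet could be moved towards $B$ the wrong way), so $\ir(Q)=r=\ir(P)$, and $Q$ has exactly $d+2$ vertices, the simplex case being Theorem~\ref{thm:density1}.

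The core of the argument is then a symmetrization performed one edge at a time. By Remark~\ref{rem:combin} I may write $Q=\conv(S_{1}'\cup S_{2})$, where $S_{1}'$ absorbs the surplus vertices $p_{1},\dots,p_{r}$, the two simplices satisfy $\dim S_{1}'+\dim S_{2}=d$, and their affine hulls meet in a single point lying in a proper face of $S_{1}'$; one checks this is again a legitimate two-simplex decomposition to which Remark~\ref{rem:intersection} applies. Fix an edge $[q_{1},q_{2}]$ of $S_{1}'$. By Remark~\ref{rem:intersection} there is a hyperplane $H'$ containing $V(Q)\setminus\{q_{1},q_{2}\}$ and meeting $[q_{1},q_{2}]$; let $H$ be the hyperplane through $o$ orthogonal to the line of $[q_{1},q_{2}]$ and let $\sigma$ be Euclidean Steiner symmetrization in $H$. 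As in the proof of Lemma~\ref{lem:eu_main} --- written there for $H$ the bisector, but whose computation works whenever the remaining vertices lie in a hyperplane crossing the edge --- one has $\sigma(Q)=\conv(I\cup\pi_{H}(V(Q)\setminus\{q_{1},q_{2}\}))$ for a segment $I$ of the same length as $[q_{1},q_{2}]$; hence $\sigma(Q)$ again has at most $d+2$ vertices, it still contains $B$ (Steiner symmetrization is monotone under inclusion and fixes $B$, whose centre $o$ lies in $H$), and by Fubini $\int_{\sigma(Q)}\rho$ lies on the extremal side of $\int_{Q}\rho$: on each line orthogonal to $H$ the integrand $\rho(\sqrt{\|y\|^{2}+t^{2}})$ is an even, monotone function of the signed distance $t$ to $H$, so re-centring a chord of prescribed length extremizes its contribution, with equality only when $Q$ is symmetric in $H$. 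Extremality of $Q$ forces $\sigma(Q)=Q$; since no three vertices of a polytope are collinear, the reflection in $H$ must interchange $q_{1}$ and $q_{2}$, so $H$ is the perpendicular bisector of $[q_{1},q_{2}]$ and $\|q_{1}\|=\|q_{2}\|$.

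Carrying this out for every edge of $S_{1}'$ and, symmetrically, for every edge of $S_{2}$ forces, exactly as in the proof of Theorem~\ref{thm:inscribedvol}, that $S_{1}'$ and $S_{2}$ are regular simplices with common centre $o$ lying in orthogonally complementary subspaces; thus $Q=\conv(S_{1}'\cup S_{2})$ has the required form, and tracking the equality cases above gives the congruence statement when $\rho$ is strictly monotone. Specializing $\rho$ to the constant function, or to the volume density of the projective-ball model, then recovers Theorem~\ref{thm:circumscribed_dp2}.

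The hard part, I expect, is the extremal set-up: one has to pose the problem so that the extremum is attained by a genuine $d$-dimensional polytope with exactly $d+2$ vertices --- ruling out under-dimensional competitors, ``thin-spike'' competitors, and, when $D$ is a small ball, competitors pinned to $\partial D$ --- and so that the resulting inequality points in the direction claimed; this is precisely why one fixes a \emph{ball} $B$ concentric with the level sets of $\rho(\|\cdot\|)$, rather than merely a numerical value of the in- or circumradius. Once the right extremal configuration is in hand, the rest is routine; in particular the case $\dim S_{i}=1$, where an ``edge'' of $S_{i}$ is the whole segment $S_{i}$ and its projection need not be an edge of $\sigma(Q)$, is harmless here, since $\int_{K}\rho$, unlike a $k$-content, does not depend on which pairs of vertices span edges.
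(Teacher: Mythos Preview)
Your approach is correct and is precisely what the paper intends: its entire proof of Theorem~\ref{thm:density2} is the remark preceding it that Theorems~\ref{thm:density1} and~\ref{thm:density2} ``can be proved by the same idea'' as Theorems~\ref{thm:circumscribed_dp1} and~\ref{thm:circumscribed_dp2}, and you have spelled that idea out --- fix a ball concentric with the level sets of $\rho(\|\cdot\|)$, take an extremizer by compactness, Steiner-symmetrize with respect to the hyperplane through $o$ orthogonal to a chosen edge, use Fubini together with the monotonicity of $t\mapsto\rho(\sqrt{\|y\|^{2}+t^{2}})$ to push the integral in the required direction, and invoke Remarks~\ref{rem:combin}--\ref{rem:intersection} to carry the argument from $d+1$ to $d+2$ vertices. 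Your observation that the identity~(\ref{eq:euStein_vol}) of Lemma~\ref{lem:eu_main} continues to hold when $H$ is any hyperplane perpendicular to the edge (not only its bisector) is exactly the small extension the paper's proof of Theorem~\ref{thm:circumscribed_dp1} uses without comment, and your caveat about the extremal set-up (attainment, ruling out degenerate competitors, circumscription about $B$) matches the places where the paper itself is terse.
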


We remark that since by central projection, an open hemisphere in $\Sph^d$ can be represented as the Euclidean space $\Eu^d$ equipped with a strictly decreasing rotationally symmetric density function, Theorems~\ref{thm:circumscribed_dp1} and \ref{thm:circumscribed_dp2} can be also applied for spherical polytopes. As an example for another application, we present Corollary~\ref{cor:Gaussian}.

\begin{cor}\label{cor:Gaussian}
Recall that the density function of the standard Gaussian measure on $\Eu^d$ is $\rho(||x||)$ with $\rho(\tau)= \frac{1}{(2\pi)^{d/2}} e^{-\tau^2/2}$. Since $\rho(\tau)$ is a strictly decreasing function of $\tau$, by Theorem~\ref{thm:density1} we have that among simplices in $\Eu^d$ with a given circumradius, the regular simplex centered at the origin has maximal Gaussian measure. By Theorem~\ref{thm:density2}, we obtain a similar statement for the Gaussian measures of polytopes in $\Eu^d$ with $d+2$ vertices and a given circumradius.
\end{cor}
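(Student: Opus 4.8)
The statement is a direct specialization of Theorems~\ref{thm:density1} and \ref{thm:density2} to the Gaussian density, so the plan is simply to verify the hypotheses of those theorems for this particular $\rho$ and then read off the conclusions. Throughout I take $D = \Eu^d$, so that $R_0 = \infty$ and the density is defined on all of $[0,\infty)$.

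First I would record the two features of $\rho(\tau) = \frac{1}{(2\pi)^{d/2}} e^{-\tau^2/2}$ that the theorems require. It is manifestly nonnegative on $[0,\infty)$, and since $\rho'(\tau) = -\frac{\tau}{(2\pi)^{d/2}} e^{-\tau^2/2} < 0$ for every $\tau > 0$, it is \emph{strictly decreasing}. Moreover, by the very definition of the standard Gaussian measure on $\Eu^d$, the measure of any convex body $K$ equals $\int_{K} \rho(\|x\|)\,dx$, which is exactly the weighted integral appearing in both theorems. Thus maximizing the Gaussian measure over a prescribed family is the same as maximizing this integral, and the hypotheses of the decreasing-density branches of the two theorems are met.

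For the simplex assertion I would invoke part (i) of Theorem~\ref{thm:density1} with this $\rho$. Because $\rho$ is strictly decreasing, the theorem applies to the family of simplices $S \subset \Eu^d$ with $\cirr(S) = \cirr(S_{\mathrm{reg}})$ and singles out the regular simplex $S_{\mathrm{reg}}$ centered at $o$ as the extremizer of $\int_{S} \rho(\|x\|)\,dx$, together with the strict equality case (congruence to $S_{\mathrm{reg}}$ centered at $o$). Rewriting the integral as the Gaussian measure of $S$ gives precisely the claim: among simplices of a given circumradius, the regular one centered at the origin has maximal Gaussian measure.

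For the polytopes with $d+2$ vertices I would apply part (i) of Theorem~\ref{thm:density2} to the same $\rho$. For an arbitrary such $P$ of prescribed circumradius, the theorem furnishes a comparison polytope $Q = \conv(S_1 \cup S_2)$ of the same circumradius, assembled from two regular simplices centered at $o$ whose spans are orthogonal complements, and compares the two weighted integrals, i.e. the two Gaussian measures. Rephrasing once more in terms of Gaussian measure yields the announced analogue for $d+2$-vertex polytopes. The only point demanding any care is the identification of the Gaussian measure of a convex body with the integral $\int \rho(\|x\|)\,dx$ on $\Eu^d$; once this is in place there is no remaining obstacle, since everything reduces to the strict monotonicity of $\rho$, which is immediate.
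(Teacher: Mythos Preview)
Your proposal is correct and follows exactly the paper's approach: the corollary is stated in the paper with its justification embedded in the statement itself (observe that $\rho$ is strictly decreasing, then invoke Theorems~\ref{thm:density1}(i) and \ref{thm:density2}(i)), and you have simply written this out in slightly more detail, checking nonnegativity and strict monotonicity explicitly and identifying the weighted integral with the Gaussian measure.
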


\section{Concluding remarks and open problems}\label{sec:remarks}

Motivated by Theorem~\ref{thm:density1}, it is a natural question to ask if for any measure generated by a rotationally symmetric density function $\rho$, if $B$ is a ball centered at $o$, then among simplices $S$ contained in $B$, $\int_S \rho (||x||) d x$ is maximal if $S$ is a regular simplex inscribed in $B$. Our next example shows that this is not true in general.

\begin{ex}
Let $B$ be the closed unit disk centered at $o$, let $0< \varepsilon \leq 1$, and set
\[
\rho(\tau) = \left\{
\begin{array}{l}
1, \hbox{ if } \tau \in [1-\varepsilon,1],\\
0, \hbox{ otherwise.}
\end{array}
\right.
\]
Let $T_{\mathrm{reg}}$ denote a regular triangle inscribed in $B$, and let $T$ denote an isosceles triangle inscribed in $B$ such that the length of its altitude belonging to its base is $\varepsilon$. An elementary computation shows that $\int_{T_{\mathrm{reg}}} \rho(||x||) dx = \Theta (\varepsilon^{2})$, and
$\int_{T} \rho(||x||) dx = \Theta (\varepsilon^{3/2})$. Thus, if $\varepsilon$ is sufficiently small, $\int_{T_{\mathrm{reg}}} \rho(||x||) dx < \int_{T} \rho(||x||) dx$.
\end{ex}

We could not prove that among simplices in $\Sph^d$ containing a given ball $B$, the regular ones circumscribed about $B$ have minimal volume.
Nevertheless, we show that an affirmative answer to this problem implies a spherical variant of the so-called Simplex Mean Width Conjecture (see e.g. \cite{Litvak}), stating that among simplices contained in a given ball $B$ in $\Eu^d$, the regular ones inscribed in $B$ have maximal mean width. Before introducing it, we remark that the mean width of a convex body is proportional to its first intrinsic volume, and note that, apart from the equality case, by the property described in Remark~\ref{rem:SMWC} an affirmative answer to Problem~\ref{prob:smw} implies the Simplex Mean Width Conjecture by a standard limit argument.

\begin{rem}\label{rem:SMWC}
A detailed description of possible variants of intrinsic volumes of convex bodies in $\Sph^d$ can be found in \cite{GHS}. One of these, based on the geometric observation that the first intrinsic volume of a convex body in $\Eu^d$ is proportional to the total rotation invariant measure of the hyperplanes intersecting it, is defined as follows.
Let $\mathcal{S}$ denote the space of spherical hyperplanes in $\Sph^d$, let $\mu$ denote the rotation invariant probability measure on $\mathcal{S}$, and let $\chi(\cdot)$ define Euler characteristic. For any convex body in $\Sph^d$, set
\[
U_1(K) = \frac{1}{2} \int_{\mathcal{S}} \chi(K \cap S) \, d \nu(S).
\]
By (20) of \cite{GHS}, for any spherically convex set $K \subset \Sph^d$, we have
\[
U_1(K) = \frac{1}{2} - \frac{\vol_d(K^*)}{\vol_d(\Sph^d)}
\]
where $K^*$ denotes the polar of $K$. Thus, in the family of simplices in $\Sph^d$ contained in a given ball $B \subset \Sph^d$, the functional $U_1$ attains its maximum at $S$ if and only if among the simplices containing $B^*$, the simplex $S^*$ has minimal volume.
\end{rem}

\begin{prob}\label{prob:smw}
Let $d \geq 4$. Prove or disprove that for any ball $B \subset \Sph^d$, among the simplices containing $B$ the regular ones circumscribed about $B$ have minimal volume.
\end{prob}

\end{document}